\newcommand{\ie}{{\it i.e. }}
\newcommand{\cf}{{\it cf. }}
\newcommand{\eg}{{\it e.g. }}
\newcommand{\loccit}{{\it loc. cit. }}
\newcommand{\resp}{{\it resp. }}
\newcommand{\un}{\mathbf{1}}
\newcommand{\F}{\mathbf{F}}
\newcommand{\G}{\mathbb{G}}
\newcommand{\N}{\mathbf{N}}
\renewcommand{\P}{\mathbf{P}}
\newcommand{\Q}{\mathbf{Q}}
\newcommand{\bS}{\mathbf{S}}
\newcommand{\U}{\mathbf{U}}
\newcommand{\Z}{\mathbf{Z}}
\newcommand{\sA}{\mathcal{A}}
\newcommand{\sB}{\mathcal{B}}
\newcommand{\sC}{\mathcal{C}}
\newcommand{\sD}{\mathcal{D}}
\newcommand{\sE}{\mathcal{E}}
\newcommand{\sH}{\mathcal{H}}
\newcommand{\sI}{\mathcal{I}}
\newcommand{\bI}{\mathbb{I}}
\newcommand{\sK}{\mathcal{K}}
\newcommand{\sL}{\mathcal{L}}
\newcommand{\bM}{\mathbb{M}}
\newcommand{\sM}{\mathcal{M}}
\newcommand{\sN}{\mathcal{N}}
\newcommand{\bP}{\mathbb{P}}
\newcommand{\bQ}{\mathbb{Q}}
\newcommand{\sP}{\mathcal{P}}
\newcommand{\sV}{\mathcal{V}}
\newcommand{\bJ}{\mathbb{J}}
\newcommand{\fI}{\mathfrak{I}}
\newcommand{\fS}{\mathfrak{S}}
\newcommand{\Spec}{\operatorname{Spec}}
\newcommand{\GL}{\operatorname{GL}}
\newcommand{\Id}{\operatorname{Id}}
\newcommand{\Tr}{\operatorname{Tr}}
\newcommand{\Ker}{\operatorname{Ker}}
\newcommand{\Coker}{\operatorname{Coker}}
\newcommand{\Coim}{\operatorname{Coim}}
\newcommand{\IM}{\operatorname{Im}}
\newcommand{\abs}{{\operatorname{abs}}}
\newcommand{\Cont}{\operatorname{Cont}}
\newcommand{\tr}{{\operatorname{tr}}}
\newcommand{\End}{\operatorname{End}}
\newcommand{\Supp}{\operatorname{Supp}}
\newcommand{\cons}{{\operatorname{cons}}}
\newcommand{\Add}{{\operatorname{\bf Add}}}
\newcommand{\Ex}{{\operatorname{\bf Ex}}}
\newcommand{\Ab}{\operatorname{Ab}}
\newcommand{\rat}{{\operatorname{rat}}}
\newcommand{\num}{{\operatorname{num}}}
\newcommand{\rig}{{\operatorname{rig}}}
\newcommand{\tnil}{{\operatorname{tnil}}}
\newcommand{\ab}{{\operatorname{ab}}}
\renewcommand{\Vec}{\operatorname{\bf Vec}}
\newcommand{\Rep}{\operatorname{\bf Rep}}
\newcommand{\by}{\xrightarrow}
\newcommand{\iso}{\by{\sim}}
\newcommand{\inj}{\hookrightarrow}
\newcommand{\surj}{\rightarrow\!\!\!\!\!\rightarrow}
\newcommand{\colim}{\varinjlim}
\renewcommand{\lim}{\varprojlim}
\renewcommand{\qed}{\hfill $\Box$\medskip}
\renewcommand{\phi}{\varphi}
\renewcommand{\epsilon}{\varepsilon}
\newcommand{\sslash}{\mathbin{/\mkern-6mu/}}
\newcommand{\red}[1]{{\color{red} #1}}
\newcounter{spec}
\newenvironment{thlist}{\begin{list}{\rm{(\roman{spec})}}%
{\usecounter{spec}\labelwidth=20pt\itemindent=0pt\labelsep=10pt}}%
{\end{list}}%
\numberwithin{equation}{section}
\newtheorem{thm}{Theorem}[section]
\newtheorem{lemma}[thm]{Lemma}
\newtheorem{prop}[thm]{Proposition}
\newtheorem{cor}[thm]{Corollary}
\newtheorem{conj}[thm]{Conjecture}
\theoremstyle{definition}
\newtheorem{defn}[thm]{Definition}
\newtheorem{nota}[thm]{Notation}
\newtheorem{rk}[thm]{Remark}
\newtheorem{rks}[thm]{Remarks}
\newtheorem{ex}[thm]{Example}
\begin{document}
\title[Abelian tensor categories and Schur finiteness]{Universal rigid abelian tensor categories and Schur finiteness}
\author{Bruno Kahn}
\address{IMJ-PRG\\ Case 247\\4 place Jussieu\\
75252 Paris Cedex 05\\France}
\email{bruno.kahn@imj-prg.fr}
\date{\today}
\begin{abstract}
We study the construction of \cite{BVK} in more detail, especially in the case of Schur-finite rigid $\otimes$-categories. This leads to some groundwork on the ideal structure of rigid additive and abelian $\otimes$-categories.
\end{abstract}
\maketitle

\tableofcontents

\enlargethispage*{40pt}

\section{Introduction}

This note complements the results of \cite{BVK}, where we showed that any additive rigid $\otimes$-category maps to an abelian one in a universal way. We retain its definitions and notation, namely
\begin{itemize}
\item A \emph{$\otimes$-category} is an additive, symmetric, monoidal, unital category (with bilinear tensor product); a \emph{$\otimes$-functor} between $\otimes$-categories is a strong symmetric, monoidal, unital additive  functor.
\item $\Add^\otimes$ is the $2$-category of $\otimes$-categories, $\otimes$-functors and $\otimes$-natural isomorphisms.
\item $\Ex^\otimes$ is the $2$-category of abelian $\otimes$-categories, exact $\otimes$-functors and $\otimes$-natural isomorphisms.
\item $\Add^\rig$ and $\Ex^\rig$ are their $1$-full and $2$-full sub-$2$-categories of rigid categories.
\item For $\sC\in \Add^\otimes$, we write $Z(\sC):=\End_\sC(\un)$ (the \emph{centre} of $\sC$).
\end{itemize}

For $\sC\in \Add^\rig$, let $T(\sC)\in \Ex^\rig$ be the category of \cite[Th. 5.1]{BVK}: it defines a $2$-left adjoint to the forgetful $2$-functor $\Ex^\rig\to \Add^\rig$. As was observed in \loccit, the centre of $T(\sC)$ is in general not a field even if that of $\sC$ is. Previously, categories $\sA\in \Ex^\rig$ had been considered mainly when $Z(\sA)$ is a field; studying the general case now becomes indispensable. This is one of the tasks of this paper; another is to study the tensor ideals of objects  $\sC\in\Add^\rig$ in detail, and to relate them to those of the centre of $T(\sC)$. 

The main results are:

\enlargethispage*{50pt}

\subsection{Structure of rigid abelian $\otimes$-categories} Let $\sA\in \Ex^\rig$.

\begin{enumerate}
\item (Proposition \ref{p1})  $Z(\sA)$ is absolutely flat \cite[Ch. I, \S 2, ex. 17]{bbki} (von Neumann regular in another terminology, \cite[4.2]{weibel}).
\item (Theorem \ref{t1}, Remark \ref{r2} and Theorem \ref{p4}) There is a one-to-one correspondence between the ideals of $Z(\sA)$ and the Serre subcategories $\sI$ of $\sA$ stable under external tensor product. Moreover, for such a Serre subcategory, the localisation functor $\sA\to \sA\sslash\sI$ is full.
\end{enumerate}

Item (1) was found independently by Peter O'Sullivan. 

\subsection{$\otimes$-ideals} Let $\sC\in \Add^\otimes$. In Definition \ref{d5}, we introduce a ``Zariski'' topology on the set $\Spec^\otimes \sC$ of prime $\otimes$-ideals of $\sC$; it is spectral in the sense of Hochster \cite{hochster}. There is a spectral map \eqref{eq2b}  $\pi:\Spec^\otimes \sC\to \Spec Z(\sC)$. If $\sC\in \Add^\rig$, $\pi$ has a continuous closed section sending maximal ideals to maximal $\otimes$-ideals (Proposition \ref{ex1}); if moreover $\sC\in \Ex^\rig$, it has another ``minimal'' spectral section $\sigma$ (Proposition \ref{p17}). 

\subsection{Application to universal rigid abelian $\otimes$-categories} If $\sC\in \Add^\rig$, the local abelian envelopes of $\sC$ in the sense of Coulembier \cite{coul3} are classified by a (possibly empty) closed subset of $\Spec Z(T(\sC))$, where $T(\sC)\in \Ex^\rig$ is the universal category of \cite[Th. 5.1]{BVK} (Corollary \ref{c5}). Note that $\Spec Z(T(\sC))$ is profinite by the already quoted proposition \ref{p1}. 

\subsection{Schur-finite $\otimes$-categories} For $\sC$ as above, there is a canonical spectral map \eqref{eq2a} $\Spec Z(T(\sC))\to \Spec^\otimes\sC$. If $\sC$ is $\Q$-linear and Schur-finite, this map is a homeomorphism for the constructible topology on the right hand side (Corollary \ref{c4}; see \S \ref{s5.3} for the constructible topology). We also justify the claim of \cite[Rem. 6.6]{BVK} in Theorem \ref{t3} and get a refinement of  \cite[Prop. 8.5]{BVK}, restricted to motives of abelian type, in Corollary \ref{c9}.

\subsection{Free $\otimes$-categories} In Propositions \ref{p20}, \ref{p24} and in Therorem \ref{t6}, we describe $T(\sL_\Q)$ where $\sL_\Q$ is Deligne's free additive rigid category on one generator (\cite[(1.26)]{dm}, \cite[\S 10]{dS}) with $\Q$ coefficients.
\bigskip

I had planned to add further results on motives as in \cite{BVK}, but those are meagre and limited to Example \ref{ex3} and  Corollary \ref{c9}.

I thank Pierre Deligne for kindly explaining a misconception I had about \cite[Prop. 10.17]{dS}, and Ofer Gabber for suggesting that the tensor spectra of Section \ref{s5} might be spectral spaces, which clarified and simplified many of my proofs. I am especially indebted to Peter O'Sullivan, not only for his article \cite{os} from which I have taken many results, but also for enlightening correspondence during the preparation of this work. He had the intuition that the $2$-functor $T$ of \cite{BVK} is analogous to the process of rendering a commutative ring absolutely flat as in \cite{olivier}: this is vindicated by \cite[Ex. 5.5]{BVK} as well as Proposition \ref{p1}, Proposition \ref{p15} and especially Theorem \ref{c4} of this paper.

By Example \ref{ex2}, the present theory of tensor spectra extends that from commutative algebra, but this extension is limited: there is no localisation theory (see Lemma \ref{l9} c) and d)), and a Noetherian theory seems uninteresting (see Remark \ref{r5}). One thing I didn't try is to compare with Balmer's tensor triangular classification \cite{balmer} (like here, his tt spectra are spectral spaces). It should certainly be done. See also Krause \cite{krause2}.

\section{Kernels and ideals}\label{s2a}

This section  recalls well-known facts for later reference.

Let $F:\sC\to \sD$ be an additive functor between additive categories. We write 
\begin{align*}
\Ker_m(F)&= \{f\in Ar(\sC)\mid F(f)=0\}\\
\Ker_o(F)&= \{C\in Ob(\sC)\mid 1_C\in \Ker_m(F)\}= \{C\in Ob(\sC)\mid F(C)=0\}\\
\Ker_m^*(F)&=\{f\in Ar(\sC)\mid f \text{ factors through } C \text{ for some } C\in \Ker_o(F)\}.
\end{align*}

Let $I$ be a (two-sided) ideal of $\sC$ \cite[1.3]{AK2}. Then $I=\Ker_m(\sC\to \sC/I)$; we write occasionally $I_o$ and $I^*$ for $\Ker_o(\sC\to \sC/I)$ and $\Ker_m^*(\sC\to \sC/I)$.

\begin{lemma}\label{l7} Let $F:\sA\to \sB$ be an exact functor between abelian categories. Then $\Ker_o(F)$ is a Serre subcategory of $\sA$, and $\Ker_m^*(F)=\Ker_m(F)$.
\end{lemma}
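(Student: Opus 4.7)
The plan is to verify the two assertions in turn, both of which follow directly from exactness of $F$.

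For the first claim, I would recall that a full subcategory of an abelian category is Serre precisely when it is closed under subobjects, quotients, and extensions. Given any short exact sequence $0 \to A' \to A \to A'' \to 0$ in $\sA$, exactness of $F$ gives a short exact sequence $0 \to F(A') \to F(A) \to F(A'') \to 0$ in $\sB$. Consequently $F(A) = 0$ if and only if both $F(A')=0$ and $F(A'')=0$, which is exactly the three closure properties at once (and $0 \in \Ker_o(F)$, so the subcategory is nonempty).

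For the second claim, the inclusion $\Ker_m^*(F) \subseteq \Ker_m(F)$ is immediate: a morphism factoring through an object sent to $0$ is itself sent to $0$. The content is the reverse inclusion. Given $f : X \to Y$ with $F(f) = 0$, I would use the canonical factorisation $f : X \twoheadrightarrow \IM(f) \hookrightarrow Y$ in $\sA$. Since $F$ is exact, it preserves images, so $F(\IM(f)) = \IM(F(f)) = 0$, i.e.\ $\IM(f) \in \Ker_o(F)$. Thus $f$ factors through an object of $\Ker_o(F)$, whence $f \in \Ker_m^*(F)$.

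There is no real obstacle here; the lemma is essentially a bookkeeping statement about exact functors, and the only point to watch is that $F$, being exact, commutes with the formation of images (which is what lets us promote the pointwise vanishing of $F(f)$ to the vanishing of $F$ on an intermediate object).
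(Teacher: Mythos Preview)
Your proof is correct and follows essentially the same approach as the paper: the paper dismisses the first claim as obvious (you spell out the short exact sequence argument) and, for the second, factors $f$ through its image and uses that $F(\IM f)=\IM F(f)=0$, exactly as you do.
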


\begin{proof} The first fact is obvious. For the second one, let $f:A\to B$ be in $\Ker_m(F)$. Factor $f$ as $A\surj C\inj B$, where $C=\IM f$. Then $C\in \Ker_o(F)$.
\end{proof}

In the situation of Lemma \ref{l7}, we shall abbreviate $\Ker_o F$ to $\Ker F$.

There will be a flurry of ideals of all sorts in the sequel. To distinguish them, we shall try and follow this notation:

\begin{itemize}
\item Ideals in commutative rings are denoted with capital italic letters.
\item In the special case of Boolean algebras, they are however denoted with gothic letters.
\item Serre $\otimes$-ideals in rigid abelian $\otimes$-categories (Definition \ref{d2} b)) are denoted with calligraphic letters. Serre localisations are denoted with double slashs $\sslash$, in order to distinguish them from quotients by ideals.
\item (Additive) $\otimes$-ideals in an rigid additive $\otimes$-category are denoted with blackboard letters.
\end{itemize}

\section{Absolutely flat rings}\label{s2}
\enlargethispage*{20pt}

In the sequel, we shall freely use the following equivalent properties for a commutative ring $R$ to be absolutely flat:

\begin{enumerate}
\item Any principal ideal is generated by an idempotent.
\item Any finitely generated submodule of a projective module is a direct summand.
\item For any $x\in R$, there exists $y$ such that $xyx=x$.
\end{enumerate}

\begin{lemma}\label{l5} Any finitely generated ideal $\fI$ of a Boolean algebra $B$  is principal. 
\end{lemma}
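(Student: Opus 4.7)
The plan is to reduce to the two-generator case by induction on the number of generators, and then exhibit an explicit single generator using the Boolean identities $x^2=x$ and (consequently) $2x=0$.

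First I would recall that in a Boolean ring every element is idempotent, and expanding $(x+x)^2=x+x$ gives $2x=0$, so $B$ has characteristic $2$. Now for the base case, given $x,y\in B$, I would propose $z:=x+y+xy$ (which under the Stone identification is the join $x\vee y$) as a generator of the ideal $(x,y)$. The inclusion $(z)\subseteq (x,y)$ is immediate. For the reverse inclusion, compute
\[
xz = x(x+y+xy) = x^2 + xy + x^2y = x + xy + xy = x,
\]
using $x^2=x$ and $2xy=0$; by symmetry $yz=y$. Hence $x,y\in (z)$, proving $(x,y)=(z)$.

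For the inductive step, given $\fI=(x_1,\dots,x_n)$ with $n\ge 3$, apply the base case to the generators $x_{n-1},x_n$ to replace them by a single generator, reducing to $n-1$ generators. After finitely many such steps one obtains a single generator for $\fI$. I expect no real obstacle here: the whole content lies in the algebraic identity above, which is a one-line verification in any Boolean ring.
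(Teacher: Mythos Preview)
Your argument is correct and is in fact more streamlined than the paper's. Both proofs exhibit a single generator for $(x_1,\dots,x_n)$ by iterated two-at-a-time combination, but the paper takes a detour: it first replaces the generators by a mutually \emph{orthogonal} family (via an explicit inductive construction producing at most $2^n-1$ elements), and only then observes that the sum $e=\sum e_i$ of orthogonal idempotents satisfies $e e_i=e_i$. You instead go straight to the lattice join $z=x+y+xy$ and verify $xz=x$, $yz=y$; this is the same identity the paper is ultimately using, but you avoid the orthogonalisation step entirely.

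What the paper's approach buys is the intermediate statement that every finitely generated ideal admits a finite \emph{orthogonal} generating set; this is actually invoked later (in the proof of Proposition~\ref{p10}, ``Choose an orthogonal basis $(e_1,\dots,e_n)$ of $\fI$ as in Lemma~\ref{l5}''). Your proof does not yield this directly, though of course once $\fI=(e)$ is principal one can recover an orthogonal decomposition of any element of $\fI$ by multiplying by $e$ and $1+e$. So for the lemma as stated your route is cleaner; the paper's longer route is chosen because the orthogonal generators are reused downstream.
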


\begin{proof}\footnote{More directly: $B$ is absolutely flat.} We first show that $\fI$ is generated by orthogonal elements. Let $(e_1,\dots,e_n)$ be a set of generators of $\fI$. Assume that the statement is proven for $<e_1,\dots,e_{n-1}>$. We may then assume that they are mutually orthogonal. Define
\[f_i=e_ie_n,\; g_i = e_i(1+e_n)\; (1\le i\le n-1),\; h=(1+\sum e_j)e_n.\]

Clearly, $f_ig_j=0$ for $i\ne j$, $f_ig_i=0$, $g_ih=0$ and
\[f_ih =e_i(1+\sum e_j)e_n=(e_i+e_i)e_n=0\]
so these elements are mutually orthogonal. Finally, 
\[f_i+g_i=e_i\;  (1\le i\le n-1),\; h+\sum f_j=e_n\]
so they generate $\fI$.

(Thus, if we start from $n$ elements, we end up with at most  $2^n-1$ orthogonal elements.)

Now, if $e=\sum e_i$, we have $e_i=ee_i$ for all $i$, so $e$ generates $\fI$.
\end{proof}

Let $R$ be a commutative ring. The set $B(R)$ of idempotents of $R$ is in one-to-one correspondence with the open-closed (clopen) subsets of (the underlying topological space to) $\Spec R$. This gives $B(R)$ the structure of a
Boolean algebra for the addition $e\oplus e' =e+e'-2ee'$ and the multiplication $e\wedge e'= ee'$ corresponding to symmetric difference and intersection.

Let $I$ be an ideal of $R$. The set $B(I)$ of idempotents in $I$ is an ideal of $B(R)$. Conversely, to any ideal $\fI$ de $B(R)$, we may associate the ideal $I(\fI)$ of $R$ generated by $\fI$.

\begin{prop} \label{l4} The map $I\mapsto B(I)$ is left inverse to the map $\fI\mapsto I(\fI)$, and is a right inverse if and only if $R$ is absolutely flat.\footnote{I thank Kevin Coulembier for suggesting the ``only if'' part.} In particular, any ideal of an absolutely flat ring is generated by its idempotents.
\end{prop}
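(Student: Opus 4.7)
The plan has two parts: show that $B\circ I$ is the identity on ideals of $B(R)$ unconditionally, and that $I\circ B$ is the identity on ideals of $R$ if and only if $R$ is absolutely flat.

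For the first part, the inclusion $\fI\subseteq B(I(\fI))$ is immediate since $\fI\subseteq B(R)$ and $\fI\subseteq I(\fI)$. For the reverse inclusion, let $e\in B(I(\fI))$. Being in $I(\fI)$, we can write $e=\sum_{i=1}^n r_i f_i$ with $r_i\in R$ and $f_i\in \fI$. By Lemma \ref{l5}, the ideal of $B(R)$ generated by $f_1,\dots,f_n$ is principal, say equal to $(f)$; then $f\in\fI$ because $\fI$ is an ideal of $B(R)$ containing each $f_i$. Since each $f_i$ lies in $(f)$, we have $f_if=f_i$, and therefore
\[ef=\sum_i r_i f_i f=\sum_i r_i f_i=e,\]
so $e\le f$ in the Boolean order, i.e.\ $e\in (f)\subseteq \fI$. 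This proves $B(I(\fI))=\fI$.

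For the second part, suppose $R$ is absolutely flat. The inclusion $I(B(I))\subseteq I$ always holds since $B(I)\subseteq I$. Conversely, for $x\in I$, property (1) of absolute flatness yields an idempotent $e$ with $(x)=(e)$; then $e\in (x)\subseteq I$, so $e\in B(I)$, and $x\in (e)\subseteq I(B(I))$. This shows $I=I(B(I))$ and the last assertion of the proposition. Conversely, suppose that $I=I(B(I))$ for every ideal $I$. Given $x\in R$, apply this to the principal ideal $(x)$: we get $x\in I(B((x)))$, so $x=\sum r_i e_i$ with idempotents $e_i\in (x)$. Let $e$ be the Boolean join $e_1\vee\dots\vee e_n$ in $B(R)$; by Lemma \ref{l5} it lies in the ideal of $B(R)$ generated by the $e_i$, hence in $B((x))$, so $e\in (x)$. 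As above $e_ie=e_i$, whence $xe=\sum r_i e_i e=x$, so $x\in (e)$ and $(x)=(e)$ is generated by an idempotent. Thus $R$ satisfies property (1), i.e.\ it is absolutely flat.

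The only step with any subtlety is the repeated appeal to Lemma \ref{l5}, which lets us replace an arbitrary finite system of idempotents by a single idempotent in the same Boolean ideal; everything else reduces to manipulating the relation $ef=e$ that encodes the Boolean order.
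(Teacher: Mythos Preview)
Your proof is correct and follows essentially the same approach as the paper's: both directions of the first part use Lemma \ref{l5} to replace finitely many idempotents by a single generator $f$ of their Boolean ideal and then exploit $ef=e$, and the ``only if'' direction is handled identically by applying the hypothesis to a principal ideal $(x)$ and repeating that same computation. The only cosmetic difference is that you spell out the Boolean join explicitly in the converse, whereas the paper just points back to ``the same computation as in the beginning of the proof.''
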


\begin{proof} We have obvious inclusions $\fI\subseteq B(I(\fI))$ and $I(B(I))\subseteq I$. For an ideal $\fI$ of $B(R)$, let $e\in B(I(\fI))$. Write $e=\sum r_ie_i$ with $r_i\in R$ and $e_i\in \fI$.  By Lemma \ref{l5}, there is $e'\in \fI$ such that $e_i=f_i e'$ for all $i$. Thus $e=re'$ for $r=\sum r_if_i$; but then $ee'=r{e'}^2=re'=e$, so $e\in \fI$ and $B(I(\fI))=\fI$.  

Assume now that $R$ is absolutely flat. For an ideal $I$ of $R$, let $x\in I$. Then $Rx =Re$ for some idempotent $e\in Rx\subseteq I$. This shows that $I(B(I))= I$. Conversely, this equality for $I=Rx$ with $x\in R$ implies that $Rx$ is generated by its idempotents. The same computation as in the beginning of the proof then shows that $x=xe$ for some idempotent $e\in Rx$, so that $Rx=Re$. This implies that $R$ is absolutely flat.
\end{proof}

The following lemma will be used in the proof of Proposition \ref{p20}.

\begin{lemma}\label{l13} Let $R$ be an absolutely flat $F$-algebra, where $F$ is a field, and assume that the composition $F\to R\to R/M$ is surjective for any $M\in \Spec R$. Then the rule $a\mapsto a\pmod{M}$ yields an isomorphism
\[\theta:R\iso \Cont(\Spec R,F) \]
where $\Cont$ denotes continuous functions.
\end{lemma}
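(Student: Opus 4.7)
The plan is to first set up the target of $\theta$ cleanly, then check that $\theta$ is a well-defined $F$-algebra homomorphism, and finally prove injectivity and surjectivity using Proposition \ref{l4} and the profiniteness of $\Spec R$. Throughout, I take $F$ with the discrete topology (since for any $M$ the surjection $F\twoheadrightarrow R/M$ is automatically injective, as $F$ is a field, hence an isomorphism of fields, and this is what makes $\theta(a)$ well-defined as a function with values in $F$).

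\textbf{Setup.} Since $R$ is absolutely flat, every prime is maximal and $\Spec R$ is a compact, Hausdorff, totally disconnected (profinite) space whose Boolean algebra of clopens is $B(R)$ via $e\mapsto V(1-e)$. For each $M\in\Spec R$, identify $R/M$ with $F$ via the given isomorphism $F\iso R/M$; then $\theta(a)(M)\in F$ is the unique element whose image in $R/M$ equals $a\pmod M$. That $\theta$ is an $F$-algebra homomorphism is immediate from the definition.

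\textbf{Continuity of $\theta(a)$.} For $c\in F$, I compute
\[
\theta(a)^{-1}(c) = \{M\in\Spec R\mid a-c\in M\} = V(a-c).
\]
By property (1) of absolutely flat rings, $R(a-c)=Re$ for some idempotent $e$, so $V(a-c)=V(e)$ is clopen. Hence the fibres of $\theta(a)$ are clopen and $\theta(a)$ is continuous (with image finite in fact, which will be reused below).

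\textbf{Injectivity.} If $\theta(a)=0$, then $a$ lies in every maximal ideal. Writing again $Ra=Re$ with $e$ idempotent, $e\in Ra$ lies in every maximal ideal. But a nonzero idempotent $e$ of $R$ satisfies $e\neq 1$ (since $a\ne 1$ if $a\in$ all max, but more cleanly: if $e\ne 0$ then $Re\ne 0$, so the complementary clopen $V(e)\ne \Spec R$, i.e.\ some maximal ideal avoids $e$). Hence $e=0$, so $Ra=0$, so $a=0$.

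\textbf{Surjectivity.} Let $f:\Spec R\to F$ be continuous. Since $\Spec R$ is compact and $F$ discrete, $f$ takes only finitely many values $c_1,\dots,c_n$, and $U_i:=f^{-1}(c_i)$ is a partition of $\Spec R$ into clopens. By Proposition \ref{l4} (and the bijection between clopens and idempotents), the $U_i$ correspond to pairwise orthogonal idempotents $e_1,\dots,e_n$ with $\sum e_i=1$. Set $a=\sum_i c_i e_i\in R$. For $M\in U_j$, $e_j\notin M$ while $e_i\in M$ for $i\ne j$; hence $e_j\equiv 1\pmod M$ and $a\equiv c_j\pmod M$, so $\theta(a)(M)=c_j=f(M)$. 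Thus $\theta(a)=f$, proving surjectivity.

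The only delicate point is making sure the identification $F\cong R/M$ is unambiguous (which it is, because a nonzero field map is automatically injective, so the hypothesized surjection is the unique $F$-algebra isomorphism $F\iso R/M$). Everything else is a direct consequence of the absolutely-flat hypothesis via Proposition \ref{l4}, and the compactness of $\Spec R$.
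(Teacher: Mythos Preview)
Your proof is correct and follows essentially the same route as the paper's: continuity of $\theta(a)$ via $\theta(a)^{-1}(c)=V(a-c)$, injectivity from the vanishing of the Jacobson radical of an absolutely flat ring (which you spell out through idempotents rather than simply citing $\bigcap_M M=0$), and surjectivity by partitioning the compact space $\Spec R$ into finitely many clopens and writing down the corresponding orthogonal idempotents. Your version is a bit more explicit in places (you observe the fibres are clopen rather than merely closed, and you make the identification $F\cong R/M$ unambiguous), but there is no real difference in strategy.
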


\begin{proof} Write $X=\Spec R$. Let $a\in R$. For $f\in F$, the set 
\[\{M\in X\mid \theta(a)(M)=f\} = \{M\in X\mid \theta(a-f1)(M)=0\}=V(a-f1)\]
is closed, hence $\theta(a)$ is continuous and $\theta$ is well-defined. It is injective because $\bigcap_{M\in X} M=0$ since $R$ is absolutely flat. Finally, let us show its surjectivity: Let $\phi\in \Cont(X,F)$. Then $\phi(X)$ is finite since $X$ is compact (Hausdorff), which determines a partition of $X$ into the clopen subsets $\phi^{-1}(f)$ ($f\in \phi(X)$). Let $e_f\in R$ be the idempotent such that $V(e_f)=\phi^{-1}(f)$; then we have the ``partition'' $\phi=\theta\big(\sum_{f\in \phi(X)} f e_f\big)$.
\end{proof}

\section{More on rigid abelian $\otimes$-categories}

Let $\sA\in \Ex^\rig$.

\begin{defn}\label{d3} 
a) $\sA$ is \emph{connected} if $Z(\sA)$ is a field.\\
b) A \emph{Serre $\otimes$-ideal} $\sI$ of $\sA$ is a Serre subcategory of $\sA$ stable under external tensor product. We write $\sA\sslash\sI$ for the corresponding localisation (``Serre quotient''), in order to avoid confusion with the quotient by an (additive) $\otimes$-ideal.
\end{defn}

By  \cite[Rem. 2.10]{exandfaith}, a) is equivalent to $\sA$ being integral.

\subsection{Structure of $Z(\sA)$}
The following elaborates on \cite[Rem. 1.18]{dm}:

\begin{prop}\label{p1} The ring $Z=Z(\sA)$ is absolutely flat; the class $\U$ of subobjects of $\un$ is a set which is in one-to-one correspondence with the open-closed (clopen) subsets of (the underlying topological space to) $\Spec Z$. This correspondence is non-decreasing (for the inclusion relation in both sets), respects intersections and exchanges union in $\Spec Z$ with sum in $\U$.
\end{prop}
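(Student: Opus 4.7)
The plan is to identify $\U$ with the Boolean algebra $B(Z)$ of idempotents of $Z=Z(\sA)$ by showing that every subobject $i:U\hookrightarrow \un$ is a direct summand of $\un$. Granting this, the bijection of $B(Z)$ with clopens of $\Spec Z$ is classical, the lattice compatibilities reduce to matching Boolean meet/join with intersection/sum, and absolute flatness of $Z$ follows at once from the characterisation ``every principal ideal is generated by an idempotent''.

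The core step, and main obstacle, is thus to construct a retraction of $i$. Since $\sA$ is rigid abelian, $-\otimes U$ is exact and duality $X\mapsto X^\vee$ is an exact contravariant autoequivalence, so $i^\vee:\un\twoheadrightarrow U^\vee$ is epi. A triangle-identity computation gives $\ev_U\circ (i^\vee\otimes \id_U)=i$; since this composition epi$\circ$mono is mono, $i^\vee\otimes \id_U:U\iso U^\vee\otimes U$ is an iso, and symmetrically $\id_U\otimes i^\vee:U\iso U\otimes U^\vee$. Next, $i\circ(i\otimes \id_U)=i\otimes i=i\circ(\id_U\otimes i)$ combined with $i$ mono yields $i\otimes \id_U = \id_U\otimes i:U\otimes U\to U$; the triangle identity $(\id_U\otimes \ev_U)\circ(\operatorname{coev}_U\otimes \id_U)=\id_U$ together with $\ev_U=i\circ(i^\vee\otimes \id_U)^{-1}$ exhibits this common map as split epi, hence an iso. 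Setting $s:=(\id_U\otimes i^\vee)^{-1}\circ \operatorname{coev}_U:\un\to U$, the rigid adjunction $\Hom(U,U\otimes U^\vee)=\Hom(U\otimes U,U)$ translates the equality $\id_U\otimes i = i\otimes \id_U$ into $\operatorname{coev}_U\circ i=\id_U\otimes i^\vee$, so applying $(\id_U\otimes i^\vee)^{-1}$ gives $s\circ i=\id_U$. Thus $i$ is split, with idempotent $e_U:=i\circ s\in Z$ identified with $\tr(\id_U)=\dim U$. The delicate point is that the routine-looking equality $\id_U\otimes i=i\otimes \id_U$ translates through the rigidity adjunction into precisely the identity making $s$ a section of $i$.

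The map $U\mapsto e_U$ is then the bijection $\U\iso B(Z)$, with inverse $e\mapsto \IM e$; commutativity of $Z$ ensures that the idempotent attached to $U$ is unique. Inclusion, intersection and sum in $\U$ correspond to order, meet and join in $B(Z)$, and these transport under the classical bijection between $B(Z)$ and the clopens of $\Spec Z$ to inclusion, intersection and union of clopens. For the absolute flatness of $Z$, given $x\in Z$ with $U:=\IM(x)$ and idempotent $e=e_U$, factor $x=i\circ p$: $\Ker(p)=\Ker(x)$ is itself a subobject of $\un$, hence a summand by the key lemma, so inverting $p$ on a complement produces $y\in Z$ with $xy=e$ and $ex=x$, whence $xyx=x$.
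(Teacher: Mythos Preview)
Your proof is correct and follows the same overall strategy as the paper: identify $\U$ with the Boolean algebra $B(Z)$ of idempotents, and deduce absolute flatness from the splitting of subobjects of $\un$. The execution differs in a noteworthy way. The paper outsources the key splitting $\un=U\oplus U^\perp$ to \cite[Prop.~1.17]{dm} and then packages the consequences into a lemma (the paper's Lemma~\ref{l1}), whose part (d) ($\Ker x\oplus\IM x=\un$ with $x|_{\IM x}$ invertible) directly yields the von Neumann regular element $y$. You instead give a self-contained proof of the splitting via the triangle identities and exactness of duality, and for absolute flatness you factor $x=i\circ p$ and apply your splitting lemma to $\Ker p$; this is equivalent but organised differently. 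Your observation that the idempotent $e_U$ equals $\tr(\id_U)=\dim U$ is a pleasant bonus not recorded in the paper, though you do not actually use it. The paper's route via Lemma~\ref{l1} buys a cleaner formula for $y$ and records the auxiliary fact $U\otimes V=U\cap V$, which it reuses later; your route has the virtue of being entirely internal to the rigid-abelian formalism, with no external citation needed.
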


\begin{rk} This proposition justifies the terminology of Definition \ref{d3} a): it shows that as soon as $Z(\sA)$ is not a field, it contains a non-trivial idempotent which in turn yields a decomposition $\sA\simeq \sA_1\times \sA_2$ by \cite[Rem. 1.18]{dm}. See Theorems \ref{t1} and \ref{p4} for a generalisation. 
\end{rk}

To prove Proposition \ref{p1}, we need a lemma:

\begin{lemma}\label{l1} a) For $U,V\in \U$, we have $U\otimes V=U\cap V$.\\
b) For $U\in \U$, the decomposition
\begin{equation}\label{eq1}
\un=U\oplus U^\perp
\end{equation}
of \cite[Prop. 1.17]{dm} is unique.\\
c) For any such $U$, the  canonical isomorphism $\un\simeq \un^\vee$ identifies $U$ and $U^\vee$.\\
d) For any $x\in Z$, we have $\Ker x\oplus \IM x=\un$ and $x_{|\IM x}$ is invertible. In particular, $\IM x=(\Ker x)^\perp$.
\end{lemma}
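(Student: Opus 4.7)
The plan is to prove (a)--(d) in turn, using \cite[Prop. 1.17]{dm} (subobjects of $\un$ are direct summands), exactness of $\otimes$ in $\sA$, and commutativity of $Z=\End_\sA(\un)$ (Eckmann--Hilton). For (a), write $\un=U\oplus U^\perp$ and $\un=V\oplus V^\perp$ via \eqref{eq1}; tensoring and using the unit iso $\un\otimes\un\simeq\un$ gives a four-fold decomposition
\[\un\simeq(U\otimes V)\oplus(U\otimes V^\perp)\oplus(U^\perp\otimes V)\oplus(U^\perp\otimes V^\perp).\]
In it, $U\simeq U\otimes\un$ is the sum of the first two summands and $V\simeq\un\otimes V$ is the sum of the first and third, so $U\cap V$ equals the first summand $U\otimes V$.

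For (b), a decomposition $\un=U\oplus U^\perp$ with fixed inclusion $\iota:U\hookrightarrow\un$ determines an idempotent $e=\iota\pi\in Z$ via the retraction $\pi:\un\to U$. Two such decompositions (same $\iota$, retractions $\pi_1,\pi_2$) give idempotents $e_i=\iota\pi_i$ satisfying $e_1e_2=\iota(\pi_1\iota)\pi_2=\iota\pi_2=e_2$ and symmetrically $e_2e_1=e_1$; commutativity of $Z$ forces $e_1=e_2$, so $U^\perp=\IM(1-e)$ is unique.

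For (c), I rely on the standard fact that $f^\vee$ is conjugate to $f$ by the canonical iso $\un\simeq\un^\vee$ for every $f\in Z$ in a symmetric rigid $\otimes$-category; dualising $\un=U\oplus U^\perp$ then gives $\un^\vee=U^\vee\oplus(U^\perp)^\vee$ with the same idempotent $e^\vee=e$, so $U^\vee$ coincides with $U$ under the iso.

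For (d), let $e$ be the idempotent of $\Ker x$ and $V:=(\Ker x)^\perp=\IM(1-e)$. The identity $x\iota_{\Ker x}=0$ forces $xe=0$, and commutativity gives $ex=0$; thus $x=(1-e)x(1-e)=\iota_V\hat x_V\pi_V$ with $\hat x_V:=\pi_V x\iota_V\in\End(V)$, which already shows $\IM x\subseteq V$ as subobjects of $\un$. Tensoring $0\to\Ker x\to\un\xrightarrow{x}\un$ with $V$ and invoking (a) gives $\Ker\hat x_V=\Ker x\otimes V=\Ker x\cap V=0$, so $\hat x_V$ is mono. By naturality of the $Z$-action on objects and (c), $(\hat x_V)^\vee$ is identified with $\hat x_V$ under $V\simeq V^\vee$; the former is epi (dual of a mono), hence so is the latter, making $\hat x_V$ an iso. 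This forces $\IM x=V$, so $\un=\Ker x\oplus\IM x$ with $x|_{\IM x}=\hat x_V$ invertible and $\IM x=(\Ker x)^\perp$ by (b). The main obstacle will be the duality bookkeeping in (c) and (d): verifying that the canonical iso $\un\simeq\un^\vee$ intertwines $f$ and $f^\vee$ for $f\in Z$, and that $(\hat x_V)^\vee$ corresponds to $\hat x_V$ under $V\simeq V^\vee$.
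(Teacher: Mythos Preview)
Your proof is correct. The arguments for (c) are essentially the paper's; for (a), (b), (d) you take genuinely different routes.

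For (a), the paper does not use the four-fold decomposition. Instead it invokes the special case $U\otimes U=U$ (buried in the proof of \cite[Prop.~1.17]{dm}) together with exactness of $\otimes$: one has $U\otimes V\subseteq U\cap V$ by exactness, and conversely $U\cap V=(U\cap V)\otimes(U\cap V)\subseteq U\otimes V$. Your argument is more symmetric and avoids citing that special case, at the cost of needing both complements $U^\perp$ and $V^\perp$ simultaneously.

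For (b), the paper deduces uniqueness from (a) via another four-term expansion $\un=(U\oplus U^\perp)\otimes(U\oplus V)$. Your Eckmann--Hilton argument (two idempotents of a commutative ring with the same image coincide) is more elementary and independent of (a); this is a nice simplification.

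For (d), the paper proceeds more directly from \cite[Prop.~1.17]{dm} and (c): it reads off $(\IM x)^\perp=\Ker(\un\to(\IM x)^\vee)=\Ker(\un\to\IM x)=\Ker x$, obtaining the decomposition at once, and then argues mono/epi for $x_{|\IM x}$ by the dual decomposition $\un\simeq\IM x\oplus\Coker x$. Your approach via the idempotent of $\Ker x$ and the restricted endomorphism $\hat x_V$ is longer but more explicit; the duality bookkeeping you flag (that $(z\cdot 1_A)^\vee=z\cdot 1_{A^\vee}$ for $z\in Z$, hence $(\hat x_V)^\vee$ matches $\hat x_V$ under $V\simeq V^\vee$) is a standard compatibility in symmetric rigid $\otimes$-categories and does hold, so there is no gap.
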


\begin{proof} a) The case $U=V$ is contained in the proof of \cite[Prop. 1.17]{dm}. In general, the exactness of tensor product \cite[Prop. 1.16]{dm} gives an inclusion $U\otimes V\subseteq U\cap V$, and conversely
\[U\cap V= (U\cap V)\otimes (U\cap V)\subseteq U\otimes V. \]
b) Let $\un=U\oplus V$ be another decomposition. We have
\begin{multline*}
\un=(U\oplus U^\perp)\otimes (U\oplus V) = U\otimes U\oplus U\otimes V\oplus U^\perp\otimes U\oplus U^\perp\otimes V \\
= U\oplus U^\perp\cap V
\end{multline*}
by a). Hence $U^\perp = U^\perp\cap V=V$. 

c) Dualising the sequence $\un\surj U\inj \un$, we get a sequence $\un\surj U^\vee\inj \un$. If $U=\IM e$ for $e$ an idempotent of $Z(\sC)$, this identifies $U^\vee$ with $\IM(^t e)$. But  ${}^t e=e$ since $e\in Z(\sC)$.

d) By c) and \cite[Prop. 17]{dm}, we have
\[(\IM x)^\perp = \Ker(\un\to (\IM x)^\vee)=\Ker(\un\to \IM x) = \Ker x.\]

This shows that $x_{|\IM x}$ is mono. The dual reasoning gives an isomorphism
\[\un\iso \IM x\oplus \Coker x\]
which in turn shows that $x_{|\IM x}$ is epi. Thus it is an isomorphism, as claimed.
\end{proof} 

\begin{proof}[Proof of Proposition \ref{p1}] The clopen subsets of $\Spec Z$ are parametrised by the idempotents of $Z$. Let $e$ be such an idempotent: we associate to it $U(e)=\IM e\subseteq \un$. Conversely, if $U\subseteq\un$, let $e(U)$ be the idempotent with image $U$ given by the decomposition \eqref{eq1}. Let us show that these correspondences are inverse to each other:
\begin{itemize}
\item $U(e(U))=U$: this is trivial.
\item $e(U(e))=e$: this follows from Lemma \ref{l1} b).
\end{itemize}

Let us now show that $Z$ enjoys property (3) in the beginning of Section \ref{s2}. Let $x\in Z$: Lemma \ref{l1} d) allows us to choose $y$ such that $y_{|\Ker x}=0$ and $y_{|\IM x}= (x_{|\IM x})^{-1}$. Thus $Z$ is absolutely flat.

Finally, the claims about the ordered structures is clear from this correspondence.
\end{proof}

\begin{rk}\label{r3} In the sequel, the idempotent $e(U)$ associated to $U\in \U$, which was used in the above proof, will play an important rôle. We record its properties: for $U,V\in \U$:
\begin{itemize}
\item $e(U\cap V) = e(U) e(V)$;
\item $e(U+V) = e(U) + e(V) -e(U) e(V)$.
\end{itemize}
\end{rk}

\subsection{The trivial part of $\sA$}

\begin{defn} Let $\sC$ be an additive category. Given $C\in \sC$, we write $\sC(C)$ for the smallest strictly full subcategory of $\sC$ containing $C$ and closed under direct sums and direct summands.
\end{defn}

\begin{lemma} With the above notation, $\sC(C)$ is equivalent to a full subcategory of the category $\sP$ of finitely generated projective left $\End_\sC(C)$-modules, with equality if $\sC$ is pseudo-abelian.
\end{lemma}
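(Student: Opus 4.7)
The plan is the classical Morita-style argument. Set $R=\End_\sC(C)$ and consider the additive functor $F=\Hom_\sC(C,-)$; depending on convention one views it as landing in right $R$-modules (via pre-composition) or, after replacing $R$ by $R^\op$, in left $R$-modules as in the statement. I shall not dwell on this distinction.

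First I would check that $F|_{\sC(C)}$ takes values in $\sP$. Since $F(C)=R$ and $F$ is additive, $F(C^n)=R^n$; any $X\in \sC(C)$ is, by construction, a direct summand of some $C^n$, so $F(X)$ is a direct summand of $R^n$, hence finitely generated projective.

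Next I would show that $F|_{\sC(C)}$ is fully faithful. On the additive hull of $\{C\}$ this is a direct Yoneda computation: $\Hom_\sC(C^m,C^n)$ identifies with $m\times n$ matrices over $R$, i.e.\ with $\Hom_R(R^m,R^n)$. For general $X,Y\in \sC(C)$, realise them as summands of some $C^m$, $C^n$ cut out by idempotents $p,q$; both $\Hom_\sC(X,Y)$ and $\Hom_R(F(X),F(Y))$ then identify with $q\circ\Hom(C^m,C^n)\circ p$, and the two identifications agree.

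Finally, in the pseudo-abelian case I would prove essential surjectivity. A finitely generated projective $R$-module $P$ is the image of some idempotent $e\in \End_R(R^n)$; by full faithfulness on sums of $C$, $e$ lifts to an idempotent $\tilde e\in \End_\sC(C^n)$. Pseudo-abelianness of $\sC$ ensures that $\tilde e$ splits, and its image $X$ is a summand of $C^n$, hence $X\in \sC(C)$ because $\sC(C)$ is closed under summands by definition; one then checks $F(X)\cong P$. The only point requiring care—and the main (mild) obstacle—is this last observation that the splitting of $\tilde e$ does land in $\sC(C)$ rather than merely in the ambient category $\sC$.
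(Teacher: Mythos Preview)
Your proof is correct and follows the same idea as the paper's. The paper is simply terser: instead of exhibiting $\Hom_\sC(C,-)$ and verifying full faithfulness and essential surjectivity by hand, it observes that the full additive subcategory on the $C^n$ is equivalent to that of free finitely generated $R$-modules and that $\sP$ is the pseudo-abelian hull of the latter, from which the conclusion follows at once.
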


\begin{proof} Let $R=\End_\sC(C)$. The preadditive subcategory of $\sC$ determined by $C$ is tautologically equivalent to that determined by $R$, and the full additive subcategory determined by the $C^n$ for $n\ge 0$ is equivalent to that of free finitely generated left $R$-modules. Since $\sP$ is the pseudo-abelian hull of the latter, the conclusion follows.
\end{proof}

\begin{prop}\label{p9} Let $\sA\in \Ex^\rig$. Then $\sA(\un)$  is a Serre subcategory of $\sA$, split in the sense of \cite[Def. 4.2]{BVK}. Moreover it is a $\otimes$-subcategory of $\sA$,  in which every object is self-dual.
\end{prop}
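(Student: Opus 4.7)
The proof would rest on the preceding lemma, identifying $\sA(\un)$ with the category $\sP_Z$ of finitely generated projective modules over $Z := Z(\sA)$, together with Proposition~\ref{p1}, which tells us that $Z$ is absolutely flat. I would handle the four assertions --- $\otimes$-subcategory, self-duality, Serre, split --- in roughly increasing order of difficulty.

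The \emph{$\otimes$-subcategory} claim is immediate: $\un\otimes\un\cong\un$ gives $\un^n\otimes\un^m\cong\un^{nm}$, so tensor products of summands of $\un^n$ and $\un^m$ are summands of $\un^{nm}$, hence in $\sA(\un)$. The unit is trivially in $\sA(\un)$.

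For \emph{self-duality}, Lemma~\ref{l1}(c) already gives $U\cong U^\vee$ for every $U\in\U$. For general $A = \IM(e)$ with $e\in M_n(Z)$ idempotent, I would reduce to this case by decomposing $A$ as a direct sum of subobjects of $\un$. The absolute flatness of $Z$ makes $\Spec Z$ Boolean, and the rank function $\fp\mapsto \dim_{k(\fp)} A\otimes_Z k(\fp)$ is continuous with finite image; partitioning $\Spec Z$ by its fibres gives orthogonal idempotents $e_1,\dots,e_r\in Z$ summing to $1$ (together with clopen subobjects $U_i=\IM(e_i)$ of $\un$ by Proposition~\ref{p1}) with respect to which $A\cong\bigoplus U_i^{k_i}$, whence $A^\vee\cong A$.

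For the \emph{Serre} property, self-duality of $\sA(\un)$ combined with rigidity of $\sA$ identifies subobjects of objects of $\sA(\un)$ with quotients, so it suffices to show $\sA(\un)$ is closed under subobjects of $\un^n$ and under extensions. The main ring-theoretic input here is that $\End(\un^n)=M_n(Z)$ is absolutely flat (matrix algebras preserve absolute flatness), so the same argument as Lemma~\ref{l1}(d) gives $\un^n\cong\Ker f\oplus \IM f$ for every endomorphism $f$; hence images of endomorphisms of $\un^n$ lie in $\sA(\un)$. To realise an arbitrary subobject $B\hookrightarrow\un^n$ as such an image, I would combine it with its dual epi $\un^n\twoheadrightarrow B^\vee$ (using self-duality of $\un^n$) to build an endomorphism of $\un^n$ with image $B$. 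For extensions, the corresponding sequence of $Z$-modules splits (everything in sight being projective), and transporting back under the equivalence $\sA(\un)\simeq\sP_Z$ via the adjoint pair $\Hom_\sA(\un,-)\dashv(-\otimes_Z\un)$ should yield the splitting in $\sA$. The \emph{split} property in the sense of \cite[Def.~4.2]{BVK} would then come from constructing the $\otimes$-retraction $X\mapsto \Hom_\sA(\un,X)\otimes_Z\un$, whose well-definedness and exactness again use absolute flatness of $Z$.

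The hardest step, and the one I expect to absorb most of the actual work, is the realisation of an arbitrary mono $B\hookrightarrow\un^n$ as the image of an endomorphism of $\un^n$: the absolute flatness of $M_n(Z)$ controls endomorphisms of $\un^n$ but not, a~priori, its subobjects in the ambient abelian category $\sA$, so one must feed rigidity carefully back into the picture.
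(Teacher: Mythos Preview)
Your $\otimes$-subcategory and self-duality arguments are fine and essentially match the paper's. The gaps are in the Serre and split parts.

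The claim that ``the same argument as Lemma~\ref{l1}(d) gives $\un^n\cong\Ker f\oplus \IM f$ for every endomorphism $f$'' is false: take $n=2$, $Z$ a field, $f$ nonzero nilpotent; then $\Ker f=\IM f$. The proof of Lemma~\ref{l1}(d) uses the self-duality of subobjects of $\un$ and \cite[Prop.~1.17]{dm}, not merely absolute flatness, and neither ingredient transfers to $\un^n$ as stated. More seriously, your plan to realise an arbitrary $i\colon B\hookrightarrow\un^n$ as the image of an endomorphism is circular: from $i$ you obtain $i^\vee\colon\un^n\twoheadrightarrow B^\vee$, but no map between $B$ and $B^\vee$ is available with which to compose these into an endomorphism of $\un^n$ until you already know $B\in\sA(\un)$. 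The extension argument has the same defect: $\Hom_\sA(\un,-)$ need not be right exact ($\un$ is not projective in general, e.g.\ in $\Rep_K(\G_a)$), so you cannot produce a short exact sequence of $Z$-modules to split, and the equivalence $\sA(\un)\simeq\sP_Z$ cannot be invoked for the middle term $B$ before you know $B\in\sA(\un)$. You correctly flagged the subobject step as the crux, but the proposed fix does not close the gap.

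The paper's route sidesteps all of this by proving directly, via induction on $n$ from the base case \cite[Prop.~1.17]{dm}, that every epimorphism $\un^n\twoheadrightarrow B$ in $\sA$ has a section. Closure under subobjects and quotients, the splitting of every short exact sequence with middle term in $\sA(\un)$ (hence the Serre and split properties via \cite[Prop.~4.3~(3)]{BVK}), and the decomposition of any object of $\sA(\un)$ into subobjects of $\un$ (hence self-duality) all follow from this single statement.
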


\begin{proof} We show successively:
\begin{enumerate}
\item Every epimorphism $f:\un^n\surj B$ in $\sA$ has a section (in particular, $B\in \sA(\un)$). 
\item Any $B$ as in (1) is a direct sum of $n$ direct summands of $\un$.
\item Every short exact sequence $0\to B'\to B\to B''\to 0$ in $\sA$ with $B\in \sA(\un)$ splits.
\end{enumerate}

(1) Consider the commutative diagram
\[\xymatrix{
\un^{n-1}\ar[d]^{i_n}\ar[dr]^g\\
\un^n\ar[r]^f\ar[d]^{p_n}& B\ar[d]^h\\
\un \ar[r]^{\bar f}&\bar B
}\]
where $i_n$ is the inclusion of the first $n-1$ summands, $p_n$ is the $n$-th projection and $\bar B=\Coker g$. By \cite[Prop. 1.17]{dm}, $\bar f$ has a section $s$ (unique by Lemma \ref{l1} b), but we don't care). Composing $s$ with a section $s_n$ of $p_n$, and then with $f$, we get a section $s'$ of $h$. Then $B=\IM g \oplus s'(\bar B)$. By induction on $n$, choose a section $s''$ of $g_1:\un^{n-1}\to \IM g$; then we get a section $\sigma$ of $f$ by $\sigma_{|\IM g}=i_ns''$ and $\sigma_{|s'(\bar B)}=s_ns$.

(2) follows from the proof of (1), by induction on $n$.

(3) follows formally from (1).

Item (3) shows that $\sA(\un)$ is a Serre subcategory of $\sA$ and is split, by \cite[Prop. 4.3 (3)]{BVK}. The claim on self-duality now follows from Lemma \ref{l1} c).
\end{proof}

\begin{rk} Conversely, the category of finitely generated projective modules over an absolutely flat commutative ring $R$ defines a split, rigid, self-dual $\otimes$-category $\sA(R)$ (compare property (2) at the beginning of Section \ref{s2}). 
This shows that $Z(\sA)$ can be any absolutely flat commutative ring. If we want to be fanciful, we can say that the $2$-functor $R\mapsto \sA(R)$ is $2$-left adjoint to the $2$-functor  $\sA\mapsto Z(\sA)$.\footnote{Interpreting monoids as categories with one object gives their category a structure of $2$-category. Given two parallel homomorphisms $f,g:M\rightrightarrows N$ of monoids, a natural transformation $f\Rightarrow g$ is an element $n\in N$ such that $f(m)n = ng(m)$ for all $m\in M$.}
\end{rk}

\subsection{The Serre $\otimes$-ideals of $\sA(\un)$} To such an ideal $\sI$, associate the set of idempotents $\fI(\sI)=\{e(U)\mid U\in \sI\cap \U\}$. This is an ideal of $B(Z(\sA))$, the Boolean algebra associated to $Z(\sA)$: equivalently, $\sI\cap \U$ is closed under sums and subobjects (see Lemma \ref{l1} a)). Conversely, to an ideal $\fI$ of $B(Z(\sA))$, associate the full additive subcategory $\sI(\fI)$ of $\sA(\un)$ generated by the $\IM e$ for $e\in \fI$: it is a Serre $\otimes$-ideal of $\sA(\un)$.

\begin{prop}\label{p10} The maps $\sI\mapsto \fI(\sI)$ and $\fI\mapsto \sI(\fI)$ are inverse to each other. They yield a bijective correspondence between  the Serre $\otimes$-ideals of $\sA(\un)$ and the ideals of $B(Z(\sA))$.
\end{prop}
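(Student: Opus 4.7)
The plan rests on two facts already at our disposal: by Proposition~\ref{p9}, every object of $\sA(\un)$ decomposes as a finite direct sum of subobjects of $\un$, and every short exact sequence in $\sA(\un)$ splits; by Proposition~\ref{p1}, the assignment $U \mapsto e(U)$ is an order-preserving bijection between $\U$ and the Boolean algebra $B(Z(\sA))$, compatible with $\cap/\wedge$ and $+/\vee$ (Remark~\ref{r3}). Once these are in place, most of the proof is book-keeping.

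First I would check well-definedness of both maps. For $\fI(\sI)$ to be a Boolean ideal: closure under joins holds because $U+V$ is the image of $U\oplus V\to \un$, hence a quotient of an object of $\sI$, hence in $\sI$; closure under meet with an arbitrary idempotent $e(W)$ holds because $U\cap W = U\otimes W$ by Lemma~\ref{l1} a), which is in $\sI$ by the $\otimes$-ideal property. For $\sI(\fI)$ to be a Serre $\otimes$-ideal of $\sA(\un)$: it is closed under direct sums and summands by construction, and because every short exact sequence in $\sA(\un)$ splits (Proposition~\ref{p9} (3)), these closures already yield Serreness. The $\otimes$-closure is checked by decomposing any $A\in \sA(\un)$ as $\bigoplus V_j$ with $V_j\in \U$: for $e\in \fI$ one has $\IM e\otimes V_j = \IM e\cap V_j$ by Lemma~\ref{l1} a), with $e(\IM e\cap V_j)=e\cdot e(V_j)\leq e$, hence in $\fI$ by downward closure.

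Next, $\sI(\fI(\sI))=\sI$ follows almost tautologically: decomposing $B\in \sA(\un)$ as $\bigoplus_i U_i$ with $U_i\in \U$, closure under summands and (split) direct sums shows that $B$ lies in either side iff each $U_i$ does, and the condition ``$U_i\in \sI$'' is literally the condition ``$e(U_i)\in \fI(\sI)$''. The remaining equality $\fI(\sI(\fI))=\fI$ has one trivial inclusion; for the other, suppose $U\in \sI(\fI)\cap \U$, so that $U$ is a direct summand of some $\bigoplus_{j=1}^n \IM g_j$ with $g_j\in \fI$. Let $g=g_1\vee\cdots\vee g_n$; then $g\in \fI$ and $g g_j=g_j$, so the endomorphism $(1-g)\in Z(\sA)$ annihilates each $\IM g_j$ (since $(1-g)g_j=0$), hence annihilates $\bigoplus \IM g_j$ and its summand $U$. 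This forces $(1-g)e(U)=0$, i.e.\ $e(U)\leq g$ in $B(Z(\sA))$, and downward closure of $\fI$ gives $e(U)\in \fI$.

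The main obstacle is this final descent: it is the only step that genuinely combines the ring structure of $Z(\sA)$ with the direct-sum structure of $\sA(\un)$, via the action of the complementary idempotent $(1-g)$ on an arbitrary summand. Everything else is formal once Propositions~\ref{p1} and~\ref{p9} are in hand.
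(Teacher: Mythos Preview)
Your proof is correct. The treatment of $\sI(\fI(\sI))=\sI$ coincides with the paper's (both decompose an object of $\sA(\un)$ into summands lying in $\U$ via Proposition~\ref{p9}). For the harder inclusion $\fI(\sI(\fI))\subseteq\fI$, however, you take a different route. The paper replaces the finitely many $g_j$ involved by a mutually orthogonal family $e_1,\dots,e_n\in\fI$ (via the proof of Lemma~\ref{l5}); since $\sA(\IM e_i,\IM e_j)=0$ for $i\ne j$, any object built from them decomposes uniquely as $\bigoplus(\IM e_i)^{n_i}$, and membership in $\U$ forces each $n_i\le 1$, whence $e=\sum n_ie_i\in\fI$. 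Your argument via the complementary idempotent $1-g$ annihilating the summand $U$ is more direct: it bypasses both the orthogonalisation step and the uniqueness-of-decomposition argument, using only naturality of the $Z(\sA)$-action together with downward closure of $\fI$. You also spell out well-definedness of the two maps, which the paper asserts in the paragraph preceding the proposition.
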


\begin{proof} The inclusions $\sI\supseteq \sI(\fI(\sI))$ and $\fI(\sI(\fI))\supseteq \fI$ are tautological. Let $\sI$ be a $\otimes$-ideal of $\sA(\un)$, and let $A\in \sI$: by item (2) of the proof of Proposition \ref{p9}, we can write $A=\bigoplus A_i$ with $A_i\in \langle\un\rangle^\natural$, and all $A_i$ belong to $\sI$. This shows equality in the first inclusion. Let now $\fI$ be an ideal of $B(Z(\sA))$. Choose an orthogonal basis $(e_1,\dots,e_n)$ of $\fI$ as in Lemma \ref{l5}, and let $U_i=\IM e_i$: then $\sA(U_i,U_j)=0$ if $i\ne j$, and any object $A\in \sI(\fI)$ has a unique decomposition of the form $A\simeq \bigoplus_{i=1}^n U_i^{n_i}$; we have $A\in \U$ if and only if all $n_i$ are $\le 1$. Let $e\in \fI(\sI(\fI))$: writing $\IM e$ in this form, we find that $e=\sum n_ie_i$, hence $e\in \fI$ as desired. 
\end{proof}

\begin{rk}\label{r1} By Propositions \ref{l4}, \ref{p1} and \ref{p10}, there is so far a bijective correspondence between 
\begin{enumerate}
\item the ideals of $B(Z(\sA))$;
\item the ideals of $Z(\sA)$;
\item the subsets of $\U$ stable under sums and subobjects, \ie filters for the order relation opposite to inclusion. For simplicity, we shall call the latter \emph{cofilters} of $\U$.
\item the Serre $\otimes$-ideals of $\sA(\un)$.
\end{enumerate}
\end{rk}

\subsection{Supports}

\begin{defn}\label{d4} Let $f:A\to B$ be a morphism of $\sA$, and let $\tilde f:\un \to A^\vee\otimes B$ be its adjoint. The \emph{support of $f$} is
\[\Supp(f) =\Ker(\tilde f)^\perp\simeq \IM \tilde f.\]
For $A\in \sA$, we define $\Supp(A)=\Supp(1_A)$.\\ 
We set $e(f) = e(\Supp(f))$ and $e(A)=e(\Supp(A))$ (see Remark \ref{r3}).
\end{defn}

By definition, $\Supp(f)$ is the smallest subobject $U$ of $\un$ such that $f$ factors through $U \otimes B$, and $\Supp(A)$ is the smallest subobject $U$ of $\un$ such that $U^\perp \otimes A=0$. Thus we also have $\Supp(f)=\Supp(\IM(f))$. 



\begin{lemma}\label{l14} For any $A\in \sA$, one has $A=A\otimes \Supp(A)$.\qed
\end{lemma}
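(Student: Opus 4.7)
Write $U=\Supp(A)\in\U$ and $V=U^\perp$. By Lemma \ref{l1} b) we have the decomposition $\un=U\oplus V$, so tensoring with $A$ yields $A\simeq (A\otimes U)\oplus(A\otimes V)$. It therefore suffices to show that $A\otimes V=0$, and the plan is to derive this from the triangular identity for rigidity.

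Set $\eta=\widetilde{1_A}:\un\to A^\vee\otimes A$. By Definition \ref{d4}, $V=\Ker(\eta)$, so the inclusion $i:V\inj \un$ satisfies $\eta\circ i=0$. Equivalently, $\eta$ factors as $\un\twoheadrightarrow U\xrightarrow{\eta'}A^\vee\otimes A$ through the projection associated to the splitting $\un=U\oplus V$. The rigidity of $A$ provides the identity
\[
1_A=(\epsilon_A\otimes 1_A)\circ (1_A\otimes \eta).
\]
Substituting the factorisation of $\eta$ shows that $1_A$ is equal to the composite
\[
A=A\otimes\un \;\xrightarrow{1_A\otimes p}\; A\otimes U \;\xrightarrow{(\epsilon_A\otimes 1_A)\circ (1_A\otimes \eta')}\; A,
\]
so $1_A$ factors through the direct summand $A\otimes U$ of $A$.

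Denote by $\iota_V:A\otimes V\to A$ and $\pi_V:A\to A\otimes V$ the inclusion and projection for the complementary summand (so that $\pi_V\iota_V=1_{A\otimes V}$), and similarly $\iota_U,\pi_U$ for $U$; note that $\pi_U=1_A\otimes p$. The factorisation above reads $1_A=\beta\circ\pi_U$ for some $\beta:A\otimes U\to A$. Composing on the right with $\iota_V$ and using $\pi_U\iota_V=0$ (orthogonality of the two summands) gives $\iota_V=0$, hence $1_{A\otimes V}=\pi_V\iota_V=0$ and $A\otimes V=0$, as required.

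The whole argument is essentially formal once one has the decomposition $\un=U\oplus U^\perp$, the factorisation of $\eta$ through $U$, and the triangular identity; I do not expect a genuine obstacle, the only point demanding care being the bookkeeping of the canonical isomorphisms $\un\otimes X\simeq X$ so that the triangular identity can be tensored cleanly and matched with the direct sum decomposition of $A$.
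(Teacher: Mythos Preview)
Your argument is correct and is essentially the paper's (tacit) approach: the paper records just before the lemma that, by definition, $\Supp(A)$ is the smallest $U\subseteq\un$ with $U^\perp\otimes A=0$, which makes the lemma immediate via $\un=U\oplus U^\perp$; your triangle-identity computation is precisely a spelled-out verification of the fact $\Supp(A)^\perp\otimes A=0$ that the paper leaves as an observation.
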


\begin{lemma}\label{l2} a) $\Supp(f\otimes g) = \Supp(f)\cap \Supp(g)$, hence also $e(f\otimes g) = e(f)e(g)$, for any morphisms $f,g$.\\
b) $\Supp(f\circ g)\subseteq \Supp(f)\cap \Supp(g)$, hence also $e(f)e(g)|e(f\circ g)$, for any composable morphisms $f,g$.\\
c) $f=f\circ e(f)$ for any $f$ with domain $\un$.
\end{lemma}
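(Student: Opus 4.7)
\smallskip
\noindent\textbf{Plan.} The proof rests on the two equivalent descriptions of $\Supp(f)$ given in Definition \ref{d4} and the remark following it: as $\Ker(\tilde f)^\perp \simeq \IM \tilde f$, and as the smallest subobject $U\subseteq \un$ such that $f$ factors through $U\otimes B$. I will use the adjoint description for (a) and the factorisation description for (b) and (c).

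For (a), set $U=\Supp(f)$, $V=\Supp(g)$. I would identify $\widetilde{f\otimes g}$ with $\tilde f\otimes \tilde g$ after rearranging factors by the symmetry $(A\otimes C)^\vee\otimes (B\otimes D)\simeq (A^\vee\otimes B)\otimes (C^\vee\otimes D)$: both arise from composing the unit of duality with $1\otimes (f\otimes g)$, and the unit for $A\otimes C$ is (up to the symmetry) the tensor product of the units for $A$ and $C$. Since $\tilde f$ factors as $\un\twoheadrightarrow U\hookrightarrow A^\vee\otimes B$ and similarly for $\tilde g$, exactness of $\otimes$ in $\sA$ yields $\IM(\tilde f\otimes \tilde g)\simeq U\otimes V$; by Lemma \ref{l1}\,a), $U\otimes V=U\cap V$. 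Hence $\Supp(f\otimes g)=\Supp(f)\cap\Supp(g)$, and the equality $e(f\otimes g)=e(f)e(g)$ follows from Remark \ref{r3}.

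For (b), say $g:A\to B$ and $f:B\to C$, with $U=\Supp(f)$ and $V=\Supp(g)$. Writing $f=(u\otimes 1_C)\circ f'$ with $u:U\hookrightarrow\un$ and $f':B\to U\otimes C$, the composite $f\circ g=(u\otimes 1_C)\circ (f'\circ g)$ factors through $U\otimes C$, so $\Supp(f\circ g)\subseteq U$. For the containment in $V$, write $g=(v\otimes 1_B)\circ g'$ with $v:V\hookrightarrow \un$ and $g':A\to V\otimes B$; naturality of the unit isomorphism gives $f\circ (v\otimes 1_B)=(v\otimes 1_C)\circ (1_V\otimes f)$, so $f\circ g=(v\otimes 1_C)\circ (1_V\otimes f)\circ g'$ factors through $V\otimes C$. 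Combining gives $\Supp(f\circ g)\subseteq U\cap V$. Translating to $B(Z(\sA))$ via Remark \ref{r3}, this inclusion reads $e(f\circ g)\le e(f)e(g)$; since in the Boolean algebra $a\mid b$ means $b\le a$, this is exactly $e(f)e(g)\mid e(f\circ g)$.

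For (c), let $f:\un\to B$. Then $\tilde f$ coincides with $f$ under $\un^\vee\otimes B\simeq B$, so $\Ker f=\Supp(f)^\perp=U^\perp$, where $U=\Supp(f)$. The idempotent $e(f)\in\End(\un)=Z(\sA)$ is the composite $\un\twoheadrightarrow U\hookrightarrow \un$ coming from the decomposition $\un=U\oplus U^\perp$ of Lemma \ref{l1}\,b). Restricting $f$ and $f\circ e(f)$ to the two summands: on $U$, both equal $f|_U$; on $U^\perp$, $f\circ e(f)$ vanishes because $e(f)$ does, and $f$ vanishes because $U^\perp=\Ker f$. Hence $f=f\circ e(f)$.

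The only non-routine checks are the naturality identity $f\circ (v\otimes 1_B)=(v\otimes 1_C)\circ (1_V\otimes f)$ used in (b) and the identification of $\widetilde{f\otimes g}$ with $\tilde f\otimes \tilde g$ up to symmetry in (a); both are direct unpackings of the coherence axioms of a symmetric monoidal category with duals.
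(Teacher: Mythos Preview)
Your proof is correct and follows essentially the same approach as the paper. For (a) the paper argues exactly as you do, via $\widetilde{f\otimes g}=\tilde f\otimes \tilde g$ together with Lemma \ref{l1}\,a); for (c) the paper simply says ``easy'' and your argument unpacks this. The one minor difference is in (b): the paper observes that $\Supp(f\circ g)\subseteq \Supp(f)$ is obvious (your first half) and then says the inclusion in $\Supp(g)$ ``can be seen dually'' --- i.e.\ by passing to the other adjoint $A\otimes B^\vee\to \un$ and repeating the argument --- whereas you prove this inclusion directly via the interchange identity $f\circ(v\otimes 1_B)=(v\otimes 1_C)\circ(1_V\otimes f)$. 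Both routes are short and equally valid; yours has the small advantage of not requiring the dual characterisation of support.
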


\begin{proof} a) follows from Lemma \ref{l1} a) and the fact that $\widetilde{f\otimes g} = \tilde f\otimes \tilde g$. In b), the inclusion $\Supp(f\circ g)\subseteq \Supp(f)$ is obvious, and the other inclusion can be seen dually. Finally, c) is easy.
\end{proof}

Lemma \ref{l2} a) allows us to give the right generalisation of \cite[Prop. 2.5 a) and b)]{BVK}:

\begin{prop}\label{p2} Given two morphisms $f,g$, one has $f\otimes g = 0$ if and only if $\Supp(f)\cap\Supp(g)=0$.\qed
\end{prop}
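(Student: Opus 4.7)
The plan is to reduce the statement to the elementary fact that a morphism vanishes if and only if its support is zero, and then combine this with part a) of Lemma \ref{l2}.

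First, I would observe that by Lemma \ref{l2} a), we already have the identity
\[\Supp(f\otimes g)=\Supp(f)\cap \Supp(g).\]
Therefore Proposition \ref{p2} reduces to the following claim: for any morphism $h$ in $\sA$, one has $h=0$ if and only if $\Supp(h)=0$.

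To prove this claim, I would unwind Definition \ref{d4}. Writing $h:A\to B$ and $\tilde h:\un\to A^\vee\otimes B$ for the adjoint, we have $\Supp(h)\simeq \IM\tilde h$. Since the rigid structure provides a bijection $\Hom_\sA(A,B)\iso \Hom_\sA(\un,A^\vee\otimes B)$ sending $h$ to $\tilde h$, the vanishing of $h$ is equivalent to the vanishing of $\tilde h$, which in turn is equivalent to $\IM\tilde h=0$, i.e.\ to $\Supp(h)=0$.

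There is no real obstacle here: the work has all been done in Lemma \ref{l2} a) and in setting up the definition of support via the adjunction. The only thing to keep track of is the rigidity hypothesis, which guarantees the adjunction is a bijection so that $\tilde h=0 \Leftrightarrow h=0$. Applying the claim to $h=f\otimes g$ and combining with the identity above yields the proposition.
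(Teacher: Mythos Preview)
Your proof is correct and follows exactly the approach the paper intends: the proposition is marked \qed\ immediately after the sentence ``Lemma \ref{l2} a) allows us to give the right generalisation\dots'', i.e.\ the paper regards it as an immediate consequence of $\Supp(f\otimes g)=\Supp(f)\cap\Supp(g)$ together with the trivial observation (which you spell out) that $h=0\iff \tilde h=0\iff \IM\tilde h=0\iff \Supp(h)=0$.
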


\begin{prop}\label{p3} Let $(*)\; 0\to A'\to A\to A''\to 0$ be a short exact sequence in $\sA$. Then\\
a) $\Supp(A) = \Supp(A')+\Supp(A'')$.\\
b) If $\Supp(A')\cap \Supp(A'')=0$, $(*)$ is split.
\end{prop}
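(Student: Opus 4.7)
The plan is to exploit the exactness of $\otimes$ in $\sA$ (\cite[Prop. 1.16]{dm}, already used in Lemma \ref{l1}) together with the characterisation of $\Supp(A)$ as the smallest $U\in \U$ such that $U^\perp\otimes A=0$ (equivalently, by Lemma \ref{l14}, the smallest $U\in \U$ with $A\otimes U=A$). Throughout, I write $U=\Supp(A')$, $V=\Supp(A'')$ and $S=\Supp(A)$.

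For a), I establish both inclusions. To see $S\subseteq U+V$, set $W=(U+V)^\perp$, so $W\subseteq U^\perp\cap V^\perp$. The monomorphisms $W\otimes A'\inj U^\perp\otimes A'=0$ and $W\otimes A''\inj V^\perp\otimes A''=0$ force $W\otimes A'=W\otimes A''=0$, and tensoring $(*)$ by $W$ then yields $W\otimes A=0$, whence $S\subseteq U+V$ by the minimality of $S$. For the reverse inclusion, tensoring $(*)$ by $S^\perp$ shows $S^\perp\otimes A'=S^\perp\otimes A''=0$ (they are subquotients of $S^\perp\otimes A=0$), and the same minimality property, applied to $A'$ and $A''$, gives $U\subseteq S$ and $V\subseteq S$.

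For b), the hypothesis $U\cap V=0$ together with Lemma \ref{l1} a) yields $U\otimes V=0$, so the standard short exact sequence $0\to U\cap V\to U\oplus V\to U+V\to 0$ of subobjects of $\un$ collapses to an isomorphism $U\oplus V\iso U+V$. Tensoring by $A$, and identifying $A\otimes(U+V)=A\otimes S=A$ by part a) and Lemma \ref{l14}, I obtain a direct-sum decomposition of subobjects of $A$:
\[A=(A\otimes U)\oplus (A\otimes V).\]
Tensoring $(*)$ separately by $U$ and using $A'\otimes U=A'$ (Lemma \ref{l14}) together with $A''\otimes U=(A''\otimes V)\otimes U=0$, I get $A'\iso A\otimes U$; symmetrically $A\otimes V\iso A''$. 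Composing the inclusion $A\otimes V\inj A$ with the inverse of $A\otimes V\iso A''$ then produces the desired section of $A\surj A''$.

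The argument amounts to linear-algebra bookkeeping with subobjects of $\un$ and $A$, made rigorous by the exactness of the tensor product. The only point needing real care is verifying that the decomposition $A=(A\otimes U)\oplus (A\otimes V)$ is compatible with $(*)$ in the sense that the inclusion $A'\inj A$ factors as $A'\iso A\otimes U\inj A$; but this is a naturality statement, since every identification in sight is obtained by tensoring fixed arrows out of $\U$ with objects of $\sA$.
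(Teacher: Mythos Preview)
Your proof is correct and follows the same approach as the paper: part a) amounts to the equivalence $U^\perp\otimes A=0\iff U^\perp\otimes A'=0$ and $U^\perp\otimes A''=0$ (which you spell out as two inclusions), and for b) both you and the paper tensor $(*)$ with $\Supp(A')$ and $\Supp(A'')$ separately to obtain $A'\iso A\otimes\Supp(A')$ and $A\otimes\Supp(A'')\iso A''$, then use the direct-sum decomposition $\Supp(A)=\Supp(A')\oplus\Supp(A'')$. The paper records the compatibility with $(*)$ as ``by construction''; your naturality remark is the same observation.
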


\begin{proof}
a) Given a subobject $U$ of $\un$, $U^\perp \otimes A=0$ $\iff$ $U^\perp \otimes A'=0$ and $U^\perp \otimes A''=0$.

b) By the exactness of $\otimes$, we have a short exact sequence
\[0\to A'\otimes \Supp(A')\to A'\otimes \Supp(A')\to A''\otimes \Supp(A')\to 0\]
where, by Lemma \ref{l14}, $A'=A'\otimes \Supp(A')$ and $A''\otimes \Supp(A')=A''\otimes \Supp(A'')\otimes \Supp(A')=0$, the last equality by hypothesis. Thus
\[A'\iso A\otimes \Supp(A').\]

In the same way, we have an isomorphism
\[A\otimes \Supp(A'')\iso A''.\]

Since $\Supp(A)=\Supp(A')\oplus \Supp(A'')$ by a), we get an isomorphism
\[A\simeq A'\oplus A''\]
which splits $(*)$ by construction.
\end{proof}

\subsection{The Serre $\otimes$-ideals of $\sA$} We now want to add the latter as a fifth item to the list of Remark \ref{r1}. In view of the above, the most convenient is to compare them with the cofilters of $\U$.

Namely, to a Serre $\otimes$-ideal $\sI\subseteq \sA$, we associate $\Phi(\sI)=\sI\cap \U$; this is a cofilter of $\U$.  Conversely, to a cofilter $\Phi$ of $\U$, we associate the full subcategory $\sI(\Phi)=\{A\in \sA\mid \Supp(A)\in \Phi\}$: this is a Serre $\otimes$-ideal by Lemma \ref{l2} a) and Proposition \ref{p3} a).   


\begin{lemma}\label{p12} Let $\sI$ be a Serre $\otimes$-ideal of $\sA$. Then $A\in \sI$ $\iff$ $\Supp(A)\in \Phi(\sI)$.
\end{lemma}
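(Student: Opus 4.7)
The plan is to extract the forward and backward directions from two features of $\sI$: tensor-stability (for any $X\in\sI$ and $Y\in\sA$, $X\otimes Y\in\sI$) and Serre-ness (closure under subobjects). The key geometric input is the identity $A\simeq A\otimes \Supp(A)$ of Lemma \ref{l14}, together with the tautological embedding $\Supp(A)\hookrightarrow A^\vee\otimes A$ coming from Definition \ref{d4}.

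For the forward direction, I would argue as follows. Assume $A\in\sI$. By tensor-stability, $A^\vee\otimes A\in\sI$. Now by Definition \ref{d4}, $\Supp(A)=\Supp(1_A)\simeq \IM(\tilde{1_A})$, where $\tilde{1_A}\colon \un\to A^\vee\otimes A$ is the adjoint of $1_A$. Hence $\Supp(A)$ is (isomorphic to) a subobject of $A^\vee\otimes A$, and therefore lies in $\sI$ because $\sI$ is Serre. Since $\Supp(A)\in\U$ by construction, we conclude $\Supp(A)\in\sI\cap\U=\Phi(\sI)$.

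For the backward direction, assume $\Supp(A)\in\Phi(\sI)\subseteq\sI$. By tensor-stability, $A\otimes \Supp(A)\in\sI$, and Lemma \ref{l14} identifies this object with $A$, so $A\in\sI$.

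I expect no serious obstacle; the statement is essentially a bookkeeping translation between the subcategory $\sI$ and the cofilter $\Phi(\sI)$. The only mildly non-formal point is noticing that $\Supp(A)$ embeds in $A^\vee\otimes A$, which is immediate from the description $\Supp(f)\simeq\IM\tilde f$ in Definition \ref{d4}. Everything else is a direct application of the tensor-ideal property, the Serre property, and Lemma \ref{l14}.
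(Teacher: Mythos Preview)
Your proof is correct and matches the paper's own argument essentially line for line: the forward direction uses that $A^\vee\otimes A\in\sI$ and that $\Supp(A)$ is the image of the coevaluation $\un\to A^\vee\otimes A$ (hence a subobject), while the backward direction is Lemma~\ref{l14} plus tensor-stability. The only cosmetic difference is that the paper writes the coevaluation as $\eta:\un\to A\otimes A^\vee$ rather than $\tilde{1_A}:\un\to A^\vee\otimes A$.
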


\begin{proof} If $A\in \sI$, so do $A\otimes A^\vee$ and the image of $\eta:\un\to A\otimes A^\vee$. This proves $\Rightarrow$. Conversely, if $\Supp(A)\in \Phi(\sI)$, then $A=A\otimes \Supp(A)\in \sI$ (Lemma \ref{l14}). 
\end{proof}


\begin{thm} \label{t1} The maps $\sI\mapsto \Phi(\sI)$ and $\Phi\mapsto \sI(\Phi)$ are mutually inverse bijections; there is a $1-1$ correspondence between Serre $\otimes$-ideals of $\sA$ and ideals of $Z(\sA)$.
\end{thm}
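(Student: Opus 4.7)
The plan is to verify directly that the two maps $\Phi(\sI) = \sI\cap\U$ and $\sI(\Phi) = \{A\in\sA\mid \Supp(A)\in \Phi\}$ are mutually inverse; once this is done, the $1$-$1$ correspondence with ideals of $Z(\sA)$ follows immediately by composing with the chain of bijections already established in Remark \ref{r1} (via Propositions \ref{l4}, \ref{p1} and \ref{p10}).

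For the identity $\sI(\Phi(\sI))=\sI$, there is essentially nothing to do: this is a direct restatement of Lemma \ref{p12}, which says $A\in \sI$ if and only if $\Supp(A)\in \Phi(\sI)$. So the non-tautological content is really the other identity $\Phi(\sI(\Phi))=\Phi$. For this I plan to first prove the small auxiliary fact that $\Supp(U)=U$ for any $U\in \U$. Granted this, one has
\[\Phi(\sI(\Phi))=\{U\in \U\mid \Supp(U)\in \Phi\}=\{U\in \U\mid U\in \Phi\}=\Phi,\]
which is what is wanted.

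To establish $\Supp(U)=U$ for $U\subseteq \un$: by Lemma \ref{l1} a), $U\otimes U=U\cap U=U$, and $U^\perp\otimes U=U^\perp\cap U=0$, so by the characterisation of $\Supp$ as the smallest subobject $V$ of $\un$ with $V^\perp\otimes U=0$, we get $\Supp(U)\subseteq U$. Conversely, if $V\subseteq U$ satisfies $V^\perp\otimes U=0$, then tensoring the decomposition $\un=V\oplus V^\perp$ with $U$ gives $U=V\otimes U=V\cap U$, hence $U\subseteq V$; therefore $\Supp(U)=U$.

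I do not expect any real obstacle here: the hard work has been front-loaded into the support formalism (Lemmas \ref{l14}, \ref{l2}, Proposition \ref{p3}), whose consequence Lemma \ref{p12} does the heavy lifting. The only subtlety to double-check is that $\Phi(\sI)$ is indeed a cofilter (closed under sums and subobjects): closure under subobjects is immediate since a Serre subcategory is stable under subobjects, and closure under sums in $\U$ follows because a sum $U+V\subseteq \un$ is a quotient of $U\oplus V\in \sI$, hence lies in $\sI$. Dually, $\sI(\Phi)$ is a Serre $\otimes$-ideal by Lemma \ref{l2} a) and Proposition \ref{p3} a), as already noted in the paragraph preceding Lemma \ref{p12}.
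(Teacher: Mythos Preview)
Your proof is correct and follows essentially the same route as the paper: both directions reduce to Lemma \ref{p12} and the identity $\Supp(U)=U$ for $U\in\U$, after which Remark \ref{r1} gives the correspondence with ideals of $Z(\sA)$. You even supply the short verification of $\Supp(U)=U$ that the paper simply asserts, and your side checks (that $\Phi(\sI)$ is a cofilter and $\sI(\Phi)$ a Serre $\otimes$-ideal) merely reconfirm facts already recorded before Lemma \ref{p12}.
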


\begin{proof} For a Serre $\otimes$-ideal $\sI$ of $\sA$, $\sI(\Phi(\sI))=\sI$ follows from Lemma \ref{p12}. Conversely, if $\Phi$ is a cofilter of $\U$, then, for $U\in \U$:
\[U\in \Phi(\sI(\Phi))\iff U\in \sI(\Phi)\iff \Supp(U)\in \Phi
\]
hence $\Phi(\sI(\Phi))=\Phi$ since $\Supp(U)=U$. This proves the first claim, and the second one then follows from Remark \ref{r1}.
\end{proof}

\begin{ex} If $\sA$ is connected, Theorem \ref{t1} says that $\sA$ has no proper Serre $\otimes$-ideals: we recover \cite[Prop. 1.19]{dm}.
\end{ex}

\begin{rk}\label{r2}
For later reference, let us specify the correspondence of Theorem \ref{t1}: in one direction it associates to a Serre $\otimes$-ideal $\sI$ the ideal $I(\sI)\subseteq Z(\sA)$ generated by the idempotents $e(A)$ for $A\in \sI$.  Conversely, if $I$ is an ideal of $Z(\sA)$, the full subcategory $\sI(I)$ of $\sA$ formed of those $A$ such that  $e(A) \in I$ 
is the desired Serre $\otimes$-ideal of $\sA$.
\end{rk}

\subsection{Centre and quotients}  Let $\sI$ be a Serre $\otimes$-ideal of $\sA$. By   \cite[Prop. 3.5]{BVK}, the exact localisation functor $F:\sA\to \sA\sslash\sI$ descends the tensor structure of $\sA$ to $\sA\sslash \sI$, giving it  the structure of an abelian rigid $\otimes$-category; moreover, the induced homomorphism $Z(F):Z(\sA)\to Z(\sA\sslash\sI)$ is surjective. The following is a complement to this proposition:
 
 \begin{thm}\label{p4} The kernel $\Ker_o(Z(F))$ of $Z(F)$ is equal to $I(\sI)$ (see Remark \ref{r2}), and the induced functor 
 \[Z(\sA)/I(\sI)\otimes_{Z(\sA)} \sA\to \sA\sslash\sI\]
is an equivalence of $\otimes$-categories; in particular, $F$ is \emph{full}.
\end{thm}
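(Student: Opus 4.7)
The plan is to establish in order: (a) the identification $\Ker_o(Z(F)) = I(\sI)$; (b) that the induced functor $\tilde F\colon Z(\sA)/I(\sI) \otimes_{Z(\sA)} \sA \to \sA\sslash\sI$ is fully faithful. Since $\tilde F$ acts as the identity on objects it will then be an equivalence; the fullness of $F$ follows, because $F$ factors as the composition of the tautologically full base-change functor $\sA \to Z(\sA)/I(\sI) \otimes_{Z(\sA)} \sA$ with $\tilde F$.

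For (a), Proposition \ref{p1} tells us $Z(\sA)$ is absolutely flat, so every ideal is generated by its idempotents (Proposition \ref{l4}); it therefore suffices to check that both sides contain the same idempotents. Given an idempotent $e \in Z(\sA)$ with associated $U = \IM(e) \in \U$ (Proposition \ref{p1}), exactness of $F$ gives
\[ Z(F)(e) = 0 \iff F(U) = 0 \iff U \in \sI, \]
and by Remark \ref{r2} this last condition is equivalent to $e = e(U) \in I(\sI)$.

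For the faithfulness of $\tilde F$, suppose $F(f) = 0$ for some $f\colon A \to B$; then $\IM f \in \sI$, so $e := e(\IM f) \in I(\sI)$. The factorisation $A \surj \IM f \inj B$ combined with Lemma \ref{l14} (namely $\IM f = \IM f \otimes \Supp(\IM f)$) shows that $f$ factors through $B \otimes \Supp(\IM f) \subseteq B$; since $1_B \otimes e$ is the idempotent endomorphism of $B$ with image $B \otimes \Supp(\IM f)$, this gives $(1_B \otimes e) \circ f = f$, \ie $f \in I(\sI) \cdot \Hom_\sA(A,B)$, so $f$ vanishes in the base change.

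For the fullness of $\tilde F$, represent $\bar f \in \Hom_{\sA\sslash\sI}(A,B)$ by a roof $A \xleftarrow{s} A' \xrightarrow{g} B$ with $\Ker s, \Coker s \in \sI$; set $U = \Supp(\Ker s) + \Supp(\Coker s) \in \sI \cap \U$ and $V = U^\perp$. Lemmas \ref{l14} and \ref{l1} a) force $\Ker s \otimes V = \Coker s \otimes V = 0$, so by exactness of $\otimes$ the morphism $s \otimes 1_V\colon A' \otimes V \to A \otimes V$ is an isomorphism. Define $h := (g \otimes 1_V) \circ (s \otimes 1_V)^{-1}\colon A \otimes V \to B \otimes V$ and extend $h$ by zero along the decompositions $A = A \otimes U \oplus A \otimes V$, $B = B \otimes U \oplus B \otimes V$ to obtain $\tilde h\colon A \to B$; since $A \otimes U$, $A' \otimes U$, $B \otimes U$ all vanish in $\sA\sslash\sI$ (as $U \in \sI$), a direct unwinding yields $F(\tilde h) = \bar f$. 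The principal technical step is this explicit lifting, which fundamentally relies on the support decomposition developed earlier in the section.
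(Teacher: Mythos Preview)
Your proof is correct. Both your argument and the paper's rest on the same underlying idea---tensoring with the complement $V=U^\perp$ of a support contained in $\sI$ kills the obstruction---but the execution differs in two places.

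For part (a), the paper argues elementwise: given $f\in Z(\sA)$ with $F(f)=0$, it exhibits an idempotent $e\in I(\sI)$ with $f=g\circ e$, whence $e(f)\in I(\sI)$. Your reduction to idempotents via Proposition~\ref{l4} is cleaner and avoids this computation.

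For fullness, the paper first invokes rigidity to reduce to morphisms with source $\un$, represents such a morphism in the Gabriel form $\tilde f\colon \un'\to A/N$ with $\un/\un',\,N\in\sI$, and then applies Proposition~\ref{p3}~b) to split the pullback of $0\to N\to A\to A/N\to 0$ along $\tilde f|_{\un''}$ (after shrinking $\un'$ to $\un''=\un'\otimes\Supp(N)^\perp$). You instead work with an arbitrary roof $A\xleftarrow{s}A'\xrightarrow{g}B$ and observe directly that $s\otimes 1_V$ is invertible, which gives the lift in one stroke without passing through the splitting lemma. This is a genuine, if modest, simplification: you bypass both the rigidity reduction and Proposition~\ref{p3}~b), at the cost of needing the general calculus-of-fractions description of $\sA\sslash\sI$ rather than Gabriel's specific subobject/quotient form. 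Either way the essential content is the same support decomposition.

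Your faithfulness argument is the paper's invocation of Lemma~\ref{l7}, made explicit.
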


\begin{proof}  Let $f\in Z(\sA)$. If $e(f)\in I(\sI)$, clearly $F(f)=0$. Conversely, assume that $F(f)=0$. By definition, there are $\lambda_1,\dots,\lambda_n\in I(\sI)$ and $g_1,\dots,g_n\in Z(\sA)$ such that 
$f=\sum_i g_i\circ \lambda_i$. Let $I_0=\langle \lambda_1,\dots,\lambda_n\rangle$. As a finitely generated ideal of an absolutely flat ring, it is generated by an idempotent $e$; thus, writing $\lambda_i = \mu_i e$ for all $i$, we get $f=g\circ e$ with $g=\sum_i g_i\circ \mu_i$. Then $e(f)$ is a multiple of $e$, hence belongs to $I$.

Next, we prove the fullness of $F$. It suffices by rigidity to show that $\sA(\un,A)\allowbreak\to (\sA\sslash\sI)(\un,A)$ is surjective for any $A\in \sA$. Let $f\in (\sA\sslash\sI)(\un,A)$. By \cite[III.1]{gabriel}, $f$ may be represented by a morphism in $\sA$
\[\tilde f:\un'\to A'\]
where $\un'$ (\resp $A'$) is a subobject (\resp a quotient) of $\un$ (\resp $A$) such that $\un/\un'\in \sI$ (\resp $N\in \sI$, where $N=\Ker(A\to A')$). Let $\Sigma = \Supp(N)$. Write $\un=\Sigma\oplus \Sigma'$, and let $\un''=\un'\otimes \Sigma'$: then
\[\un\simeq (\un/\un')\oplus  \un'\otimes \Sigma \oplus \un''\]
where the first two summands belong to $\sI$, the second one because $\Sigma\in \sI$ by Lemma \ref{p12} and because $\sI$ is a Serre $\otimes$-ideal. Thus, up to replacing $\un'$ by $\un''$, we may assume that $\un'\cap \Sigma=\un'\otimes \Sigma=0$. By Proposition \ref{p3} b), the pull-back by $\tilde f$ of the extension $0\to N\to A\to A'\to 0$ then splits; this yields a lift $\tilde f':\un'\to A$ of $\tilde f$. Since $\un'$ is a direct summand of $\un$, $\tilde f'$ further extends to a morphism $\un\to A$, which still represents $f$.

To conclude, it remains to show that, for any $A,B\in \sA$, $\Ker(\sA(A,B)\allowbreak\to (\sA\sslash\sI)(A,B))\subseteq \Ker(\sA(A,B)\to  Z(\sA)/I(\sI)\otimes_{Z(\sA)} \sA(A,B))$: this follows from Lemma \ref{l7}.
\end{proof}

\begin{nota}\label{n1} Given an ideal $I$ of $Z(\sA)$, we simply write $\sA\sslash I$ for $\sA\sslash \sI(I)$ (see Remark \ref{r2}).
\end{nota}

Note that there is a $1-1$ correspondence between Serre $\otimes$-ideals of $\sA\sslash I$ and Serre $\otimes$-ideals of $\sA$ containing $\sI(I)$; by the above, we have $\sI\supseteq \sI(I)$ if and only if $I(\sI)\supseteq I$.

\begin{cor}\label{c1} There exists a conservative family of $\otimes$-functors from $\sA$ to connected rigid abelian $\otimes$-categories $\sA_M$.
\end{cor}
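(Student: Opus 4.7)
The natural idea is to take $M$ to range over the maximal ideals of $Z(\sA)$ and set $\sA_M := \sA\sslash M$ in the sense of Notation \ref{n1}. First I would check that each $\sA_M$ is connected: by Theorem \ref{p4}, the centre of $\sA\sslash M$ equals $Z(\sA)/M$, and since $M$ is maximal this quotient is a field. The localisation functor $F_M\colon \sA\to \sA_M$ is an exact $\otimes$-functor by \cite[Prop. 3.5]{BVK}.

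For conservativity it suffices to show that if $F_M(A)=0$ for every maximal ideal $M$ of $Z(\sA)$, then $A=0$; this uses the standard fact that, because each $F_M$ is exact, a morphism is an isomorphism if and only if its kernel and cokernel map to zero. By Remark \ref{r2} and the definition of $\sI(M)$, $F_M(A)=0$ is equivalent to $e(A)\in M$. So I need to argue that the intersection of all maximal ideals of $Z(\sA)$ is zero.

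The key input is Proposition \ref{p1}: $Z(\sA)$ is absolutely flat. In such a ring every prime ideal is maximal and the ring is reduced, so the Jacobson radical vanishes; equivalently $\bigcap_M M=0$ (this is exactly the injectivity step invoked in the proof of Lemma \ref{l13}). Hence the hypothesis $e(A)\in M$ for all maximal $M$ forces $e(A)=0$, so $\Supp(A)=0$ via the correspondence of Proposition \ref{p1}, and finally $A=A\otimes \Supp(A)=0$ by Lemma \ref{l14}.

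There is essentially no real obstacle here: the statement is a direct formal consequence of Theorem \ref{p4} (which supplies the quotient categories with the right centre and shows $Z(F_M)$ has the correct kernel), Proposition \ref{p1} (which makes $Z(\sA)$ absolutely flat, hence Jacobson-semisimple), and Lemma \ref{l14} (which turns vanishing of the support into vanishing of the object). The only thing to be careful about is the equivalence between ``conservative'' and ``kills no nonzero object'' for families of exact functors, which is immediate from the abelian structure.
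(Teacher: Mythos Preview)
Your proof is correct and follows essentially the same route as the paper: take $\sA_M=\sA\sslash M$ for $M$ ranging over the maximal ideals of $Z(\sA)$, use Theorem \ref{p4} to see that each $\sA_M$ is connected, translate $F_M(A)=0$ into $e(A)\in M$, and conclude via the vanishing of the Jacobson radical in an absolutely flat ring. The paper is slightly terser (it does not spell out Lemma \ref{l14} or the ``conservative $\Leftrightarrow$ kills no nonzero object'' reduction), but the argument is the same.
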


\begin{proof} Let $M$ run through the maximal ideals of $Z(\sA)$, and let $\sA_M := \sA\sslash M$. By Theorem \ref{p4}, $Z(\sA_M)$ is a field for all $M$. Let $A\in \sA$ be such that $A_M=0$ for all $M$. Then $e(A)\in M$ for all $\alpha$. But, in an absolutely flat ring, the intersection of all maximal ideals is $0$. Hence $e(A)=0$ and $A=0$.
\end{proof}

\begin{rks}
a) This corollary depends on the axiom of choice!\\
b) If $Z(\sA)$ is not Noetherian, the choice of all maximal ideals in the proof of Corollary \ref{c1} is redundant. For example, let $\sA$ be the $\otimes$-category of finitely generated projective modules over $R=\prod_p \F_p$. Then $\Spec R$ is much larger than the set of prime numbers (ultrafilters), but this set obviously suffices to give a conservative family.
\end{rks} 

\begin{cor}\label{c8} If $Z(\sA)$ is Noetherian (\ie if $\Spec Z(\sA)$ is finite), the canonical functor
\[\sA\to \prod_{M\in \Spec Z(\sA)} \sA\sslash M\]
is an equivalence of $\otimes$-categories.\qed
\end{cor}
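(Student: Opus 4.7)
The plan is to explicitly decompose $\sA$ as a product matching the decomposition of $Z(\sA)$, and then identify each factor with the corresponding $\sA\sslash M_i$.

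Since $Z:=Z(\sA)$ is absolutely flat (Proposition \ref{p1}) with finite spectrum, it is isomorphic to $\prod_{i=1}^n k_i$ with $k_i=Z/M_i$. Let $e_1,\dots,e_n\in Z$ be the resulting complete set of pairwise orthogonal idempotents (so $\sum_i e_i=1$) and put $U_i=\IM e_i\in\U$. By Lemma \ref{l1} a), b) and Remark \ref{r3} we obtain $\un=\bigoplus_{i=1}^n U_i$ with $U_i\otimes U_j=0$ for $i\ne j$. Tensoring with an arbitrary $A\in\sA$ and invoking exactness of $\otimes$ yields a functorial decomposition $A\iso\bigoplus_i A\otimes U_i$. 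Defining $\sA_i$ to be the full subcategory of $\sA$ on those objects $A$ with $A\otimes U_j=0$ for all $j\ne i$ (equivalently, $A\otimes U_i\iso A$), this decomposition upgrades to a tensor equivalence $\sA\iso\prod_{i=1}^n\sA_i$.

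The remaining task is to identify $\sA_i$ with $\sA\sslash M_i$. By Remark \ref{r2} and Lemma \ref{l14}, an object $A$ lies in $\sI(M_i)$ iff $e(A)\in M_i$; since $M_i\subset\prod_j k_j$ is the ideal cut out by the vanishing of the $i$-th coordinate, this translates into $A\otimes U_i=0$. Under the equivalence $\sA\simeq\prod_j\sA_j$ just obtained, $\sI(M_i)$ therefore corresponds to the sub-product $\prod_{j\ne i}\sA_j$, and quotienting picks out precisely $\sA_i$. Consequently the canonical functor of the corollary factors, up to equivalence, as the product of the projections $\sA\simeq\prod_j\sA_j\to\sA_i$, which is visibly an equivalence of $\otimes$-categories.

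The one step that bears verification is the identification $\sA\sslash M_i\simeq\sA_i$. The exact $\otimes$-functor $A\mapsto A\otimes U_i$ kills $\sI(M_i)$ by the computation above, hence descends to a functor $\sA\sslash M_i\to\sA_i$; essential surjectivity is immediate from $\sA_i\subseteq\sA$, fullness follows from the fullness of Serre localisation (Theorem \ref{p4}), and faithfulness is transparent from the product picture $\sA\simeq\prod_j\sA_j$. All remaining verifications are routine consequences of the machinery set up in the earlier sections.
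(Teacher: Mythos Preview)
Your argument is correct and is precisely the natural expansion of what the paper intends: the paper gives no proof beyond the \qed, relying on the idempotent decomposition already signalled in the Remark following Proposition \ref{p1} together with Theorems \ref{t1} and \ref{p4}. Your write-up simply makes this explicit, and every step checks out.
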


\begin{cor}\label{c2} Let $F:\sA\to \sB$ be a $\otimes$-functor in $\Ex^\rig$, with $\sB$ connected. Then there is a unique maximal ideal $M$ of $Z(\sA)$ such that $F$ factors through $\sA\sslash M$ (into a faithful $\otimes$-functor). We write $M=M(F)$.
\end{cor}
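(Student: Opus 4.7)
The plan is to produce $M$ as the kernel of the induced homomorphism on centres, exploit absolute flatness to see it is maximal, check by an idempotent/support computation that $F$ annihilates $\sI(M)$, and then obtain faithfulness of the factored functor from the fact that the only Serre $\otimes$-ideals of $\sA\sslash M$ correspond to the two ideals of a field.

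Concretely, I would set $M := \Ker(Z(F):Z(\sA)\to Z(\sB))$. Since $\sB$ is connected, $Z(\sA)/M$ embeds into the field $Z(\sB)$; but $Z(\sA)/M$ is absolutely flat as a quotient of the absolutely flat ring $Z(\sA)$ (Proposition \ref{p1}), and an absolutely flat integral domain is a field (using characterisation (3) at the start of Section \ref{s2}). Hence $M$ is maximal.

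Next I would verify that $F$ kills the Serre $\otimes$-ideal $\sI(M)$ described in Remark \ref{r2}: if $e(A)\in M$, then the idempotent $Z(F)(e(A))\in Z(\sB)$ vanishes, so by exactness of $F$ one has $F(\Supp(A))=F(\IM e(A))=\IM F(e(A))=0$; Lemma \ref{l14} then gives $F(A)=F(A\otimes \Supp(A))=F(A)\otimes F(\Supp(A))=0$. By the universal property of the Serre quotient, $F$ descends uniquely to an exact $\otimes$-functor $\bar F:\sA\sslash M\to\sB$.

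Faithfulness of $\bar F$ is the one step that requires a little care. By Lemma \ref{l7}, $\Ker_o\bar F$ is a Serre subcategory, and since $\bar F$ is a $\otimes$-functor it is stable under external tensor product, i.e.\ it is a Serre $\otimes$-ideal of $\sA\sslash M$. By Theorem \ref{p4}, $Z(\sA\sslash M)=Z(\sA)/M$, which is a field; so Theorem \ref{t1} applied to $\sA\sslash M$ says that the only Serre $\otimes$-ideals of $\sA\sslash M$ are $0$ and the whole category. Since $\bar F(\un)=\un\ne 0$, we conclude $\Ker_o\bar F=0$, whence $\bar F$ is faithful by the second assertion of Lemma \ref{l7}.

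Finally, uniqueness of $M$ is formal: if $F$ also factors faithfully through some $\sA\sslash M'$, then $\Ker_o F=\sI(M')$ (since the localisation kills exactly $\sI(M')$ and the second factor is faithful); the same reasoning gives $\Ker_o F=\sI(M)$, so $\sI(M)=\sI(M')$ and Theorem \ref{t1} forces $M=M'$. I do not foresee a genuine obstacle: the only subtle point is to remember that $F$ is exact, which is what lets us identify $F(\IM e(A))$ with $\IM F(e(A))$ and lets us invoke Lemma \ref{l7} to pass between $\Ker_o$ and $\Ker_m$.
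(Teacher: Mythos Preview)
Your argument is correct, but it runs in the opposite direction from the paper's. The paper starts from the object kernel: it sets $\sI=\Ker_o F$, observes this is a Serre $\otimes$-ideal, and then \emph{defines} $M:=I(\sI)$ via the correspondence of Theorem~\ref{t1}; maximality of $M$ is read off from Theorem~\ref{p4} (which gives $Z(\sA\sslash\sI)=Z(\sA)/M$) together with the fact that a faithful exact $\otimes$-functor into a connected category forces the centre to be a field. Faithfulness of the factored functor is then automatic, since one has quotiented by the full object kernel.

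You instead begin at the centre, setting $M=\Ker Z(F)$, prove maximality directly from absolute flatness, and then verify by hand (via the idempotent $e(A)$ and Lemma~\ref{l14}) that $F$ kills $\sI(M)$; faithfulness then requires the extra appeal to Theorem~\ref{t1} for $\sA\sslash M$ to see there is no room for a nontrivial kernel. Both routes are valid and yield the same $M$ (your argument implicitly shows $\sI(M)=\Ker_o F$, so $M=I(\Ker_o F)$ by the bijection). The paper's path is a bit shorter because it never needs the explicit support computation, while yours has the minor advantage of identifying $M$ concretely as $\Ker Z(F)$ from the outset and of spelling out uniqueness, which the paper leaves implicit.
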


\begin{proof} Let $\sI=\{A\in \sA\mid F(A)=0\}$: this is a Serre $\otimes$-ideal of $\sA$ such that the induced $\otimes$-functor $\sA\sslash \sI\to \sB$ is faithful. In particular, $Z(\sA\sslash \sI)\inj Z(\sB)$ is a field by \cite[Prop. 2.5 d)]{BVK}. But $Z(\sA\sslash \sI)=Z(\sA)/I(\sI)$ by Theorem \ref{p4}. Let $M=I(\sI)$; Then $\sI=\sI(M)$ by Remark \ref{r2}.
\end{proof}
 
 The following strengthens Corollary \ref{c1} and will be used in the next section.
 
 \begin{prop}\label{p11} Let $X$ be a set of morphisms of $\sA$. Then the set
 \[\{M\in \Spec Z(\sA)\mid f(M)\ne 0\;\forall f\in X\}\]
 is closed. Here $f(M)$ denotes the image of $f$ in $\sA\sslash M$.
 \end{prop}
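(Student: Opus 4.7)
The plan is, for each $f\in X$, to show that $S_f := \{M\in \Spec Z(\sA)\mid f(M)\ne 0\}$ is clopen; then the set in the statement is $\bigcap_{f\in X} S_f$, an intersection of closed sets, hence closed. By rigidity of $\sA$ and of $\sA\sslash M$, a morphism $f:A\to B$ becomes zero in $\sA\sslash M$ if and only if its adjoint $\tilde f:\un\to A^\vee\otimes B$ does, so $S_f=S_{\tilde f}$. Replacing each $f$ by $\tilde f$, I may therefore assume the source of every $f\in X$ is $\un$.

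For such $f:\un\to C$ the key step is to prove
\[
f(M)=0 \iff e(f)\in M,
\]
where $e(f)\in Z(\sA)$ is the idempotent of Definition \ref{d4}. The direction $(\Leftarrow)$ is immediate: Lemma \ref{l2} c) gives $f=f\circ e(f)$, and by Theorem \ref{p4} the image of $e(f)$ in $Z(\sA\sslash M)=Z(\sA)/M$ vanishes, so $f(M)=f(M)\circ 0=0$. For $(\Rightarrow)$, the localisation functor $F:\sA\to \sA\sslash M$ is exact, so Lemma \ref{l7} supplies a factorisation $f=g\circ h$ with $h:\un\to A$ and $A\in \Ker F=\sI(M)$; by Remark \ref{r2} this means $e(A)\in M$. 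Since $\IM h\inj A$ and $\sI(M)$ is a Serre subcategory (Theorem \ref{t1}), $\IM h\in \sI(M)$, and since $\Supp(h)\simeq \IM h$ we get $e(h)\in M$. Finally Lemma \ref{l2} b) gives $\Supp(f)\subseteq \Supp(h)$, that is $e(f) e(h)=e(f)$, whence $e(f)\in M$ as $M$ is an ideal.

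To finish: since $Z(\sA)$ is absolutely flat by Proposition \ref{p1}, for any idempotent $e$ the elements $e$ and $1-e$ are orthogonal with sum $1$, so $V(e)$ and $V(1-e)$ partition $\Spec Z(\sA)$; each is therefore clopen. Hence $S_f=D(e(f))=V(1-e(f))$ is clopen, and the intersection over $f\in X$ is closed.

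The one point requiring real care is the $(\Rightarrow)$ half of the intermediate equivalence, where Lemma \ref{l7} (factorisation through $\Ker F$), the Serre-subcategory property of $\sI(M)$ (Theorem \ref{t1} / Remark \ref{r2}), and the subadditivity of supports under composition (Lemma \ref{l2} b)) must all be combined to land $e(f)$ inside $M$. Everything else is formal, given Proposition \ref{p1} and the structure of $\Spec$ of an absolutely flat ring.
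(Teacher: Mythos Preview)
Your proof is correct and follows essentially the same approach as the paper's: both show that $S_f=V(1-e(\IM f))$ for each $f\in X$ and then intersect. The paper is more direct --- it simply observes that by exactness of $\sA\to\sA\sslash M$ one has $f(M)=0\iff \IM f\in\sI(M)\iff e(\IM f)\in M$ (the last step by Remark \ref{r2}), bypassing your reduction to source $\un$ (unnecessary since $e(f)=e(\IM f)$ for any $f$, as noted after Definition \ref{d4}) and your separate treatment of the two implications via Lemmas \ref{l7} and \ref{l2}.
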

 
 \begin{proof} Suppose first that $X$ consists of one element $f$. Then $f(M)\ne 0$ $\iff$ $\IM f\notin \sI(M)$ $\iff$ $e(\IM f)\notin M$ $\iff$ $1-e(\IM f)\in M$. The set of those $M$ is closed. The general case follows, since an intersection of closed subsets is closed. 
  \end{proof}

\section{Prime $\otimes$-ideals}\label{s5}

\subsection{Spectral spaces and the constructible topology}\label{s5.3} The constructible topology was developed in \cite[I, \S 7.2]{EGA}. A simpler variant was independently studied by Hochster in \cite{hochster} under the name of ``patch topology''; Hochster also introduced the notion of spectral spaces and spectral maps. Following the Stacks project \url{https://stacks.math.columbia.edu/tag/08YF}, we take Hochster's viewpoint but use ``constructible topology'' in place of ``patch topology''.

\begin{defn}[Hochster] \label{d6} a) A topological space is \emph{spectral} if it is $T_0$ and quasi-compact, the quasi-compact open subsets are closed under finite intersection and form an open basis, and every nonempty irreducible closed subset has a generic point. A continuous map of spectral spaces is \emph{spectral} if inverse images of quasi-compact open sets are quasi-compact. \\
b) Let $X$ be a spectral space. The \emph{constructible topology} $T(X)$ on $X$ is the topology which has the quasi-compact open sets and their complements as an open sub-basis.
\end{defn}

As an example, if $f:A\to B$ is a homomorphism of commutative rings, then $\Spec A$ and $\Spec B$ are spectral spaces and $f^*:\Spec B\to \Spec A$ is a spectral map.

The following is a spectral analogue of Olivier's theorem \cite[Prop. 5]{olivier}:

\begin{thm}[\protect{\cite[\S 9]{hochster}}]\label{t7} a) For any spectral space $X$, $T(X)$ is profinite (\ie compact (Hausdorff) and totally disconnected). Any profinite space is spectral.\\
b) Let $\bS$ be the category of spectral spaces and spectral maps, and let $\P$ be its full subcategory of profinite spaces. Then $T$ defines a right adjoint to the inclusion $\P\inj \bS$.
\end{thm}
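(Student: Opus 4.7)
The plan is to prove (a) by checking the defining properties in each direction, and (b) by constructing a counit $\epsilon_X: T(X) \to X$ and verifying the universal property.

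\textbf{Spectral implies profinite.} Given $X$ spectral, I verify that $T(X)$ is compact Hausdorff and totally disconnected. The defining sub-basis of $T(X)$ consists of quasi-compact opens of $X$ together with their complements, so every sub-basic open is clopen in $T(X)$; finite intersections then form a clopen basis, giving total disconnectedness and (together with the $T_0$ property of $X$ and the fact that QC opens form a basis of $X$, so any two points are separated by a QC open) Hausdorffness. Compactness is the crux. I would apply Alexander's subbase theorem to the sub-basis of $T(X)$: suppose a family $\{U_i\}_{i\in I} \cup \{X\setminus V_j\}_{j\in J}$ (with $U_i, V_j$ QC open in $X$) has no finite subcover. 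Then the collection of closed constructible sets $\{X \setminus U_i\} \cup \{V_j\}$ has the finite intersection property. Using Zorn's lemma, enlarge it to a maximal such family; using the fact that finite intersections of QC opens are QC, and maximality plus the irreducibility/generic-point axiom, one shows the intersection of this family contains a point of $X$, contradicting the cover hypothesis.

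\textbf{Profinite implies spectral.} For $P$ profinite, the key observation is that the QC opens of $P$ coincide with the clopens: a clopen is closed in the compact space $P$ hence compact (i.e.\ QC), and a QC open in a Hausdorff space is closed because QC subsets of Hausdorff spaces are closed. Clopens form a basis by definition of profinite, are stable under finite intersection, and $P$ is $T_0$ (Hausdorff) and quasi-compact; irreducible closed subsets of a Hausdorff space are singletons, so they trivially possess generic points. Thus $P \in \bS$. Moreover the same identification shows $T(P) = P$: the sub-basis of $T(P)$ consists of clopens of $P$, which already generate the topology of $P$, while $T(P)$ is by construction finer than $P$.

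\textbf{Adjunction.} For a spectral map $f: X \to Y$, $T(f): T(X)\to T(Y)$ is continuous because pre-images of sub-basic opens $U, Y\setminus U$ of $T(Y)$ are $f^{-1}(U), X\setminus f^{-1}(U)$, with $f^{-1}(U)$ QC open in $X$ by spectrality. Define $\epsilon_X: T(X) \to X$ to be the identity on underlying sets: it is continuous since the topology of $T(X)$ refines that of $X$, and spectral because for $U \subseteq X$ QC open, $\epsilon_X^{-1}(U)=U$ is sub-basic-open hence clopen in $T(X)$ and therefore compact in the profinite space $T(X)$. To verify the adjunction, for $P\in \P$ and $X\in \bS$, I check that composition with $\epsilon_X$ yields a bijection $\Hom_\P(P,T(X)) \iso \Hom_\bS(P,X)$. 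Injectivity is clear (both describe set-theoretic functions $P\to X$). For surjectivity, given a spectral $f: P \to X$, the preimages $f^{-1}(U)$ of QC opens $U\subseteq X$ are QC open in $P$, hence clopen by the coincidence QC-open $=$ clopen in $P$, so both $f^{-1}(U)$ and $f^{-1}(X\setminus U)$ are open in $P$; thus $f$ viewed as a map $P\to T(X)$ is continuous.

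The main obstacle is the compactness of $T(X)$ in part (a): this is the substantive content of Hochster's theorem, and the Alexander-subbase/Zorn argument must invoke the generic-point hypothesis delicately. Once this is settled, the rest is a routine verification, and the adjunction in (b) follows painlessly from the identification QC-open $=$ clopen in any profinite space.
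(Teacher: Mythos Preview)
The paper does not give a proof of this theorem: it is stated with attribution to Hochster \cite[\S 9]{hochster} and used as a black box. So there is no ``paper's own proof'' to compare against.

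Your argument is the standard one and is essentially Hochster's. The verifications that profinite $\Rightarrow$ spectral, that $T(P)=P$ for $P$ profinite, and of the adjunction via the counit $\epsilon_X=\id$ are all correct and complete; the key identity ``QC open $=$ clopen'' in a profinite space does exactly the work you claim. The one place where your write-up is a sketch rather than a proof is the compactness of $T(X)$: you correctly reduce to Alexander's subbase lemma and set up a maximal family with the finite intersection property, but the sentence ``using \ldots\ maximality plus the irreducibility/generic-point axiom, one shows the intersection of this family contains a point'' hides the actual work. One has to argue that the set of QC opens belonging to the maximal family is a prime filter (or dually, the complements form a prime ideal) in the distributive lattice of QC opens, so that the intersection of their closed complements is an irreducible closed set, which then has a generic point lying in every member of the family. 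You flag this yourself as the delicate step, so this is not a gap in understanding, only in exposition.
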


\subsection{Zariski $\otimes$-topology} Recall that $\sC\in \Add^\otimes$ is \emph{integral} if $f\otimes g=0$ $\Rightarrow$ $f=0$ or $g=0$.

\begin{defn}\label{d5} Let $\sC\in \Add^\otimes$. A $\otimes$-ideal $\bP\subsetneq \sC$ is \emph{prime} if $\sC/\bP$ is integral. We write $\Spec^\otimes \sC$ for the set of its prime $\otimes$-ideals, and provide it with the  topology whose closed subsets are given by the
\[V(\bI)=\{\bP\mid \bI\subseteq \bP\}\]
for $\bI$ a $\otimes$-ideal of $\sC$.
\end{defn}

Since $V(\bI\otimes \bJ)=V(\bI)\cup V(\bJ)$ and $\bigcap V (\bI_\alpha) = V (\sum \bI_\alpha)$, this indeed defines a topology.

\begin{ex}\label{ex2} For a commutative ring $R$, let $L(R)$ be the $\otimes$-category of finitely generated free $R$-modules. There is a $1-1$-correspondence between $\otimes$-ideals of $L(R)$ and ideals of $R$, obtained by restricting a $\otimes$-ideal to $R=L(R)(\un,\un)$. In particular, $\Spec^\otimes L(R)$ is canonically homeomorphic to $\Spec R$. 
\end{ex}

The main result of this subsection is:

\begin{thm}\label{t8} For any $\sC\in \Add^\otimes$, $\Spec^\otimes\sC$ is a spectral space; for any $F\in \Add^\otimes(\sC, \sD)$, $F^*:\Spec^\otimes \sD\to \Spec^\otimes \sC$ is a spectral map.
\end{thm}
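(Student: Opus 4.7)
My plan is to verify Hochster's axioms (Definition~\ref{d6} a)) directly, using as open basis the sets $D(f) := \Spec^\otimes \sC \setminus V(\langle f\rangle) = \{\bP\mid f\notin \bP\}$, where $\langle f\rangle$ is the $\otimes$-ideal generated by a morphism $f$ of $\sC$. Every open is of the form $\bigcup_{f\in\bI} D(f)$ for some $\otimes$-ideal $\bI$, so the $D(f)$ form an open basis. Primality of $\bP$ gives the key identity $D(f)\cap D(g) = D(f\otimes g)$, so this basis is stable under finite intersection. The $T_0$ axiom is immediate: two distinct primes are separated by some $D(f)$.

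The technical core is a Zorn-type lemma that I would prove first: if $S$ is a set of morphisms of $\sC$ containing $1_\un$ and closed under $\otimes$, any $\otimes$-ideal disjoint from $S$ is contained in a prime $\otimes$-ideal disjoint from $S$. Zorn's lemma produces a maximal such $\bP$; if $f, g \notin \bP$, maximality forces $\bP + \langle f\rangle$ and $\bP + \langle g\rangle$ to meet $S$, giving $s = p + a$, $t = q + b$ with $s, t \in S$, $p, q \in \bP$, $a \in \langle f\rangle$, $b \in \langle g\rangle$. The interchange law and symmetry of $\otimes$ imply $a \otimes b \in \langle f \otimes g\rangle$, so $s\otimes t \in (\bP + \langle f\otimes g\rangle) \cap S$, forcing $f\otimes g \notin \bP$.

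Applying this with $S = \{1_\un\}$ proves quasi-compactness of $\Spec^\otimes\sC$: if $\bigcup_\alpha D(f_\alpha) = \Spec^\otimes\sC$, then $\sum_\alpha\langle f_\alpha\rangle$ is contained in no prime, hence equals $\sC$, so $1_\un$ already lies in a finite subsum and a finite subcover exists. Applying it with $S = \{1_\un, f, f^{\otimes 2}, \dots\}$ shows each $D(f)$ is quasi-compact: if $D(f)\subseteq \bigcup_\alpha D(g_\alpha)$ then some $f^{\otimes n} \in \sum_\alpha \langle g_\alpha\rangle$, so $f^{\otimes n}$ lies in the sum of finitely many $\langle g_{\alpha_i}\rangle$, and iterated primality extracts a finite subcover $D(f)\subseteq \bigcup_i D(g_{\alpha_i})$. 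Quasi-compact opens are thus exactly finite unions of basic opens, closed under finite intersection. For sobriety, given a nonempty irreducible closed $Z = V(\bI)$, I would set $\bP_Z := \bigcap_{\bQ\in Z}\bQ$: this is a proper prime $\otimes$-ideal with $V(\bP_Z) = Z$, since if $f, g \notin \bP_Z$ the opens $Z\cap D(f)$ and $Z\cap D(g)$ are both nonempty, and by irreducibility so is $Z\cap D(f\otimes g)$, whence $f\otimes g\notin\bP_Z$.

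For the second assertion, $F^*(\bQ) := F^{-1}(\bQ)$ is a proper prime $\otimes$-ideal of $\sC$ since $F(1_\un) = 1_\un$ and $F$ preserves $\otimes$; continuity follows from $(F^*)^{-1}(V(\bI)) = V(\langle F(\bI)\rangle)$, and spectrality from the identity $(F^*)^{-1}(D(f)) = D(F(f))$, a basic open and hence quasi-compact. The hard part throughout is the Zorn-lemma computation in paragraph two: one must verify the inclusion $\langle f\rangle \otimes \langle g\rangle \subseteq \langle f \otimes g\rangle$, which via the interchange law $(\alpha\circ\beta)\otimes(\gamma\circ\delta) = (\alpha\otimes\gamma)\circ(\beta\otimes\delta)$ and the symmetry isomorphisms reduces to rewriting $(h\circ(f\otimes 1_X)\circ k)\otimes(h'\circ(g\otimes 1_Y)\circ k')$ as a composite involving $f\otimes g$ tensored with an identity.
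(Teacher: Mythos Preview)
Your argument is correct and follows the same overall architecture as the paper: it establishes Lemmas~\ref{l9} and~\ref{l10} (the description of $\langle f\rangle$, the basis $\{D(f)\}$, and the identity $D(f)\cap D(g)=D(f\otimes g)$) and then verifies Hochster's axioms to obtain Corollary~\ref{c7}, from which the theorem follows together with $(F^*)^{-1}(D(f))=D(F(f))$.

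The one substantive difference is in the prime-existence step. The paper's primary proof of Proposition~\ref{p13} builds the localised category $\sC[1/f]$ (O'Sullivan's fractional closure) and pulls back a prime from there; you instead prove the multiplicative-set avoidance lemma directly by Zorn, in the style of \cite[Prop.~1.8]{am}. The paper explicitly acknowledges this alternative route just after Proposition~\ref{p13}. Your approach is more elementary and self-contained; the paper's approach, on the other hand, introduces the category $\sC[1/f]$ which has independent interest (see Lemma~\ref{l15} and the surrounding discussion). Either way the verification of quasi-compactness of $D(f)$ and sobriety proceeds exactly as you describe.
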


The proof will be given after Corollary \ref{c7}.

Let $\bI$ be a $\otimes$-ideal of $\sC$, and let $f\in \sC(A,B)$. We say that $f$ is \emph{contained in $\bI$} if $f\in \bI(A,B)$. Since an intersection of $\otimes$-ideals is a $\otimes$-ideal, there is a smallest $\otimes$-ideal $\bI$ containing a given set $E$ of morphisms of $\sC$: we say that $\bI$ is \emph{generated by $E$}. We may generalise the definition of $V(\bI)$ to $V(E)$ for any such $E$, and then $V(E)=V(\bI)$ where $\bI$ is the $\otimes$-ideal generated by $E$. 

\begin{lemma}\label{l9} a) Let $f\in Ar(\sC)$. Then the $\otimes$-ideal $(f)$ generated by $f$ is the set of compositions $h\circ (f\otimes 1_C)\circ g$ making sense, where $C\in Ob(\sC)$.\\
b) A $\otimes$-ideal $I$ of $\sC$ is proper if and only if $1_\un\notin I$.\\
c) A morphism $f:A\to B$ is not contained in any maximal $\otimes$-ideal if and only if $1_\un$ may be factored as
\[\un\by{h} A\otimes C\by{f\otimes 1_C} B\otimes C\by{g} \un\]
for some $C \in Ob(\sC)$.\\
d) If $B=\un$ in c), we may choose $C=\un$ and $g=1_\un$, \ie find a section to $f$.
\end{lemma}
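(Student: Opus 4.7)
I would prove the four parts in order, since each uses the previous.

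For (a), let $S(f)$ denote the set of compositions $h\circ (f\otimes 1_C)\circ g$. Since any $\otimes$-ideal containing $f$ must contain $S(f)$, it suffices to verify that $S(f)$ is itself a $\otimes$-ideal containing $f$. The containment $f\in S(f)$ comes from $C=\un$ with unitors for $g,h$. Closure under pre- and post-composition is immediate by absorbing into $g$ and $h$. Closure under tensoring with $1_D$ is achieved by absorbing $D$ into $C$:
\[(h\circ (f\otimes 1_C)\circ g)\otimes 1_D = (h\otimes 1_D)\circ (f\otimes 1_{C\otimes D})\circ (g\otimes 1_D),\]
up to the associativity constraint. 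The only delicate point is closure under addition: given two elements $h_i\circ (f\otimes 1_{C_i})\circ g_i$ ($i=1,2$) with common source $A'$ and target $B'$, I would use the direct sum trick, writing their sum as $(h_1,h_2)\circ (f\otimes 1_{C_1\oplus C_2})\circ \binom{g_1}{g_2}$, where the distributivity of $\otimes$ over $\oplus$ identifies $(A\otimes C_1)\oplus (A\otimes C_2)\simeq A\otimes (C_1\oplus C_2)$ and similarly for $B$.

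For (b), clearly $1_\un\in I$ if $I=\sC$. Conversely, if $1_\un\in I$, then stability under tensoring with identities gives $1_{\un\otimes A}=1_\un\otimes 1_A\in I$; transporting along the unit isomorphism $\un\otimes A\simeq A$ yields $1_A\in I$, hence $f=f\circ 1_A\in I$ for any $f:A\to B$.

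For (c), combining (a) and (b), the factorization of $1_\un$ in the form $h\circ (f\otimes 1_C)\circ g$ is equivalent to $1_\un\in (f)$, equivalently $(f)=\sC$. By Zorn's lemma (applied to proper $\otimes$-ideals ordered by inclusion, using (b) to characterize properness), this is equivalent to $(f)$ being contained in no maximal $\otimes$-ideal, \ie $f$ being contained in no maximal $\otimes$-ideal.

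For (d), suppose $B=\un$ and $1_\un = g\circ (f\otimes 1_C)\circ h$. I define $h':\un\to A$ by $h' = (1_A\otimes g)\circ h$, using the unit isomorphism $A\otimes \un\simeq A$. Then
\[f\circ h' = f\circ (1_A\otimes g)\circ h = (1_\un\otimes g)\circ (f\otimes 1_C)\circ h = g\circ (f\otimes 1_C)\circ h = 1_\un,\]
where the middle equality is the interchange law $f\otimes g=(1\otimes g)\circ (f\otimes 1) = (f\otimes 1)\circ (1\otimes g)$ and the last uses the unit constraint. Thus $h'$ is a section of $f$, as desired, and this amounts to taking $C=\un$ and $g=1_\un$ in the factorization of (c). The main obstacle is simply keeping track of the unitor and associator isomorphisms in (a) and (d); all arguments are essentially formal.
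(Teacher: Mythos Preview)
Your proof is correct and follows essentially the same route as the paper's. The only cosmetic difference is in (a): you check closure of $S(f)$ under $-\otimes 1_D$ (which, together with closure under composition, implies closure under $-\otimes g$ for arbitrary $g$), whereas the paper checks $-\otimes g$ directly via $(f\otimes 1_C)\otimes g=(1\otimes g)\circ(f\otimes 1_{C\otimes D})$; your addition argument via $C_1\oplus C_2$ is exactly the ``Mac Lane diagonal/codiagonal trick'' the paper invokes, and your (d) spells out the commutation $g\circ(f\otimes 1_C)=f\otimes g=f\circ(1_A\otimes g)$ that the paper only names.
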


\begin{proof} a) This set is obviously closed under left and right external compositions and just as obviously under direct sums; by the usual Mac Lane diagonal/codiagonal trick, this implies that it is closed under ordinary sums. To conclude, it suffices to show that $(f\otimes 1_C)\otimes g\in (f)$ for any $C\in Ob(\sC)$ and any $g\in Ar(\sC)$. But
\[(f\otimes 1_C)\otimes g = (1\otimes g)\circ(f\otimes 1_{C\otimes D})\]
where $D$ is the domain of $g$.

b) ``If'' is obvious. Conversely, if $1_\un\in I$ then so does $1_C=1_\un\otimes 1_C$ for any $C\in \sC$, hence any $f\in Ar(\sC)$.

c) By a), this condition is equivalent to $1_\un\in (f)$, and we conclude by b).

d) Use the commutation $g\circ (f\otimes 1_C)=f\otimes g=f\circ (1_C\otimes g)$.
\end{proof}

\begin{lemma}\label{l10} a) Any maximal $\otimes$-ideal is prime, and any proper $\otimes$-ideal $\bI$ is contained in a maximal $\otimes$-ideal; in particular, $V(\bI)\neq \emptyset$. Any prime $\otimes$-ideal contains a minimal prime $\otimes$-ideal.\\
b) The $D(f):= \Spec^\otimes \sC - V(\{f\})$ for $f\in Ar(\sC)$ form a basis of open sets for the Zariski $\otimes$-topology.\\ 
c) One has $D(\{f_1,\dots,f_n\})=D(f_1\oplus\dots \oplus f_n)$ and $D(f\otimes g)=D(f)\cap D(g)$.\\
d) Let $F:\sC\to \sD$ be a $\otimes$-functor, with $\sC,\sD\in \Add^\otimes$. Then $F$ induces a continuous map $F^*:\Spec^\otimes \sD\to \Spec^\otimes \sC$. If $F$ is full and essentially surjective, $F^*$ is a closed immersion.\\
e) In d), if $F$ is the pseudo-abelian completion of $\sC$, $F^*$ is a homeomorphism.
\end{lemma}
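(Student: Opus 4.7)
Parts (a)--(c) are largely formal given Lemma~\ref{l9}. For (a), the implication ``maximal $\Rightarrow$ prime'' is seen by passing to $\bar\sC=\sC/\bM$, whose only $\otimes$-ideals are $0$ and the whole category: if $\bar f\otimes\bar g=0$ with $\bar f\ne 0$, then $(\bar f)=\bar\sC$, so $1_\un\in (\bar f)$, and Lemma~\ref{l9}~a) writes $1_\un$ as a sum of compositions $h_i\circ(\bar f\otimes 1_{C_i})\circ g_i$; tensoring on the right with $\bar g$ and using the symmetry of $\otimes$ together with $\bar f\otimes\bar g=0$ collapses this identity to $\bar g=0$. The existence of a maximal $\otimes$-ideal above a given proper one is Zorn's lemma, using Lemma~\ref{l9}~b) to see that a directed union of proper $\otimes$-ideals remains proper. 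Minimal primes below a given prime come from the dual Zorn argument, using that an intersection along a chain of prime $\otimes$-ideals is prime (a standard case analysis on the chain).

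For (b), each closed set $V(\bI)$ equals $\bigcap_{f\in \bI}V(\{f\})$, so complements of the $V(f):=V(\{f\})$ form a basis of open sets. For (c), the first identity follows from the retract formula $f_i=\pi_i\circ(\bigoplus_j f_j)\circ \iota_i$, which makes $\bigoplus_j f_j$ lie in a $\otimes$-ideal iff all $f_i$ do, so $V(\bigoplus f_i)=\bigcap V(f_i)=V(\{f_1,\dots,f_n\})$. The second identity is immediate from the definition of a prime $\otimes$-ideal (for $V(f\otimes g)\subseteq V(f)\cup V(g)$) and from $f\otimes g$ belonging to any $\otimes$-ideal containing $f$ or $g$ (for the reverse).

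For (d), $F^{-1}(\bQ)$ is a prime $\otimes$-ideal because $F$ is strong monoidal and $F(1_\un)=1_\un$; continuity follows from the identity $(F^*)^{-1}(V(\bI))=V(\langle F(\bI)\rangle)$. When $F$ is full and essentially surjective, let $\bK=\Ker_m F$: then $F$ factors as $\sC\twoheadrightarrow \sC/\bK\xrightarrow{\bar F}\sD$ with $\bar F$ a $\otimes$-equivalence (fully faithful, essentially surjective, inheriting the monoidal structure from $\sC$). The quotient map identifies $\Spec^\otimes(\sC/\bK)$ with $V(\bK)\subseteq \Spec^\otimes\sC$ (since $\otimes$-ideals of $\sC/\bK$ correspond bijectively to $\otimes$-ideals of $\sC$ containing $\bK$), and $\bar F^*$ is a homeomorphism; so $F^*$ is a closed immersion with image $V(\bK)$.

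The main work is in (e), where I would construct an explicit inverse to $F^*$. Objects of $\sC^\natural$ are pairs $(C,e)$ with $e^2=e$ in $\End_\sC(C)$, realised as retracts of $F(C)=(C,1_C)$ via canonical $\iota:(C,e)\hookrightarrow F(C)$ and $\pi:F(C)\to (C,e)$ with $\pi\iota=1_{(C,e)}$ and $\iota\pi=e$. Given $\bP\in\Spec^\otimes\sC$, let $\bP^\natural$ consist of those $f:(C,e)\to (C',e')$ whose underlying morphism $f\in \sC(C,C')$ (noting $e'fe=f$) lies in $\bP$; the fact that $\bP^\natural$ is a prime $\otimes$-ideal follows directly from the corresponding properties of $\bP$ in $\sC$, and $F^{-1}(\bP^\natural)=\bP$ by construction. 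Conversely, for $\bQ\in\Spec^\otimes\sC^\natural$, any $f:(C,e)\to (C',e')$ satisfies $f=\pi'\circ(\iota'f\pi)\circ\iota$ with $\iota'f\pi:F(C)\to F(C')$, so $f\in\bQ$ iff $\iota'f\pi\in F^{-1}(\bQ)$; this gives $\bQ=(F^{-1}(\bQ))^\natural$. The same factorisation shows that $F^*$ sends each subbasic closed set $V(f)\subseteq \Spec^\otimes \sC^\natural$ to $V(\iota'f\pi)\subseteq \Spec^\otimes \sC$, hence is a closed map on a subbasis of closed sets; combined with the continuity of $F^*$ from (d), the bijection $F^*$ is a homeomorphism. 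The one delicate point is keeping the bookkeeping straight between morphisms of $\sC^\natural$ with arbitrary source/target pairs $(C,e),(C',e')$ and their ``lifts'' $\iota'f\pi$ in the image of $F$; once this is clear, the $\otimes$-ideal property and primality transfer cleanly via the retract relation $(C,e)\lhd F(C)$.
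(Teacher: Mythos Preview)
Your proof is correct and follows essentially the same outline as the paper's (which is extremely terse, saying only that ``the first statements of a) and d) deserve a proof'' and declaring e) ``clear''). The one substantive difference is your argument for ``maximal $\Rightarrow$ prime'' in (a): you write $1_\un$ as an element of $(\bar f)$ via Lemma~\ref{l9}~a) and then tensor with $\bar g$, whereas the paper uses the annihilator trick --- for $f\ne 0$ in a $\otimes$-simple category, the set $\{g\mid f\otimes g=0\}$ is itself a proper $\otimes$-ideal, hence zero. Both are standard transplants from commutative algebra; the annihilator version is marginally slicker since it bypasses the explicit description of $(\bar f)$. (Incidentally, Lemma~\ref{l9}~a) actually gives $1_\un$ as a \emph{single} composition $h\circ(\bar f\otimes 1_C)\circ g$, not a sum --- the sum is absorbed into a direct sum by the Mac Lane trick --- but your argument works either way.) Your detailed treatment of (e) simply unpacks what the paper compresses into the observation that every $\otimes$-ideal of $\sC$ extends uniquely to $\sC^\natural$, and remains prime if it was prime.
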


\begin{proof} Only the first statements of a) and d) deserve a proof. For a), we reduce to showing that if $\sC$ has no proper nonzero $\otimes$-ideals, then $\sC$ is integral. Let $f\in Ar(\sC)-\{0\}$: the set of $g$ such that $f\otimes g=0$ is a proper $\otimes$-ideal of $\sC$, hence must be $0$. For d), the point is that $F^{-1}(\bI)$ is a proper $\otimes$-ideal of $\sC$ if $\bI$ is a proper $\otimes$-ideal of $\sD$, which follows from Lemma \ref{l9} b). Finally, e) is clear since any $\otimes$-ideal $\bI$ of $\sC$ extends uniquely to a $\otimes$-ideal of $\sD$, prime if $\bI$ is prime.
\end{proof}

The following is a $\otimes$-analogue of a well-known result of commutative algebra:

\begin{prop}\label{p13} For any $\otimes$-ideal $\bI$ of $\sC\in \Add^\otimes$, the ideal $\sqrt[\otimes]{\bI}$ of \cite[Def. 7.4.1]{AK2} is the intersection of the prime $\otimes$-ideals of $\sC$ containing $\bI$.
\end{prop}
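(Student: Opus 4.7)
The plan is to adapt the classical commutative-algebra proof that the radical of an ideal equals the intersection of the primes containing it, with Lemma \ref{l9} a) playing the role that the ring multiplication plays in the usual argument. One direction is essentially formal; the interesting one is the existence of enough prime $\otimes$-ideals, which we get via Zorn's lemma.

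For the easy inclusion $\sqrt[\otimes]{\bI} \subseteq \bigcap_{\bP \supseteq \bI} \bP$, take $f \in \sqrt[\otimes]{\bI}$. By \cite[Def. 7.4.1]{AK2} this means that some iterated tensor power $f^{\otimes n}$ lies in $\bI$. For any prime $\otimes$-ideal $\bP \supseteq \bI$, the image $\bar f$ of $f$ in $\sC/\bP$ then satisfies $\bar f^{\otimes n}=0$, and integrality of $\sC/\bP$ forces $\bar f = 0$ by an obvious induction on $n$; hence $f \in \bP$.

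For the reverse inclusion I argue contrapositively. Assume $f \notin \sqrt[\otimes]{\bI}$, so $f^{\otimes n}\notin\bI$ for all $n\ge 1$. Consider the set $\Sigma$ of $\otimes$-ideals $\bJ \supseteq \bI$ with $f^{\otimes n}\notin\bJ$ for every $n\ge 1$. It is nonempty (it contains $\bI$) and closed under totally ordered unions, so by Zorn's lemma it has a maximal element $\bP$. To show $\bP$ is prime, suppose for contradiction that $a,b\notin\bP$ while $a\otimes b\in\bP$. Then $\bP+(a)$ and $\bP+(b)$ properly contain $\bP$, so by maximality each leaves $\Sigma$, i.e.\ contains some power of $f$; write $f^{\otimes m}=p_1+\alpha$ and $f^{\otimes n}=p_2+\beta$ with $p_1,p_2\in\bP$, $\alpha\in(a)$, $\beta\in(b)$.

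The main obstacle is then to verify that $f^{\otimes(m+n)} = f^{\otimes m}\otimes f^{\otimes n}$ lies in $\bP$, contradicting $\bP\in\Sigma$. Expanding the tensor product gives four terms: three of them involve a factor $p_1$ or $p_2$ and hence lie in $\bP$, leaving $\alpha\otimes\beta$. Using Lemma \ref{l9} a) one writes $\alpha = h_1\circ(a\otimes 1_{C_1})\circ g_1$ and $\beta = h_2\circ(b\otimes 1_{C_2})\circ g_2$, and then uses the symmetry constraint to rearrange $a\otimes 1_{C_1}\otimes b\otimes 1_{C_2}$ into $(a\otimes b)\otimes 1_{C_1\otimes C_2}$ up to the coherence isomorphisms; this exhibits $\alpha\otimes\beta$ as a composition of the form $h\circ((a\otimes b)\otimes 1_D)\circ g$, which by Lemma \ref{l9} a) again lies in $(a\otimes b)\subseteq\bP$. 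The delicate bookkeeping is precisely this verification that $(a)\otimes(b)\subseteq(a\otimes b)$; once it is in hand, the Zorn argument produces a prime $\bP\supseteq\bI$ with $f\notin\bP$ (since $f=f^{\otimes 1}\notin\bP$), completing the proof.
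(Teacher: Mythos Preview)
Your argument is correct and is precisely the ``more elementary'' alternative that the paper itself alludes to immediately after its proof, citing \cite[proof of Prop.~1.8]{am}. The verification that $(a)\otimes(b)\subseteq(a\otimes b)$ via Lemma~\ref{l9}~a) and the symmetry constraint is the only point that needs care beyond the commutative-algebra template, and you handle it correctly.

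The paper's own proof takes a different route: rather than Zorn's lemma, it constructs a localisation $\sC[1/f]$ (using O'Sullivan's fractional-closure machinery from \cite[\S 3]{os}) and checks that this category is nonzero precisely because $f^{\otimes n}\neq 0$ for all $n$; any prime $\otimes$-ideal of $\sC[1/f]$ then pulls back to the desired prime avoiding $f$. Your argument is more self-contained and avoids importing the fractional-closure construction, at the cost of the small combinatorial check on $(a)\otimes(b)$. The localisation approach, in turn, has the virtue of introducing the category $\sC[1/f]$, which parallels the ring-theoretic localisation and fits the paper's broader analogy with absolutely flat completions; but for the bare statement of Proposition~\ref{p13} your proof is arguably cleaner.
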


\begin{proof} We reduce to $\bI=0$. Let $f\notin \sqrt[\otimes]{0}$. We must find $\bP\in\Spec^\otimes \sC$ such that $f\notin \bP$. We use the idea of \cite[\S 3]{os} (fractional closure), that we apply to the not necessarily regular morphism $f:A\to A'$. Thus, let $\sC[1/f]$ be the category with the same objects as $\sC$ and morphisms given by
\[ \sC[1/f](C,C')=\colim_n \sC_{f^{\otimes n}}(C,C')\]
for $C,C'\in \sC$, where $\sC_{f^{\otimes n}}(C,C')$ is the subgroup of $\sC(A^{\otimes n}\otimes C,{A'}^{\otimes n}\otimes C')$ defined in the commutation of diagrams (3.1) in \cite{os}. This is a rigid $\otimes$-category by the same arguments as in \loccit Moreover, $\sC[1/f]\ne 0$: to see this it suffices to see that $1_\un$, the identity endomorphism of $\un\in \sC$, does not map to $0$ in $\sC[1/f](\un,\un)$. But this is clear, since the image of $1_\un$ in $\sC_{f^{\otimes n}}(\un,\un)$ is $f^{\otimes n}$. Now, the inverse image of any prime $\otimes$-ideal of $\sC[1/f]$ by the $\otimes$-functor $\sC\to \sC[1/f]$ is the desired prime $\otimes$-ideal $\bP$.
\end{proof}

This proof is inspired by the one in commutative algebra using localisation. As O'Sullivan pointed out, there is a more elementary proof in the style of other proofs from commutative algebra as in \cite[proof of Prop. 1.8]{am}.

\begin{cor}\label{c7} For $\sC\in \Add^\otimes$,\\
a) An open subset of $\Spec^\otimes \sC$ is quasi-compact if and only if it is of the form $D(f)$. In particular, $\Spec^\otimes \sC$ is quasi-compact and its quasi-compact open subsets are closed under finite intersections.\\
b) For $Y\subseteq \Spec^\otimes \sC$, let $\bI(Y)=\bigcap_{\bP\in Y} \bP$. Then $\bI(Y)=\sqrt[\otimes]{\bI(Y)}$. Moreover, $Y$ is irreducible if and only if $\bI(Y)$ is prime. The map $\bP\mapsto V(\bP)$ is a bijection of $\Spec^\otimes \sC$ onto the set of irreducible closed subsets of  $\Spec^\otimes \sC$.
\end{cor}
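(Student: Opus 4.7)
The strategy is to follow the standard commutative algebra template for $\Spec R$, substituting morphisms for ring elements and using Proposition \ref{p13} and Lemma \ref{l10} in place of the usual Nullstellensatz-type inputs.

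For part (a), I would first note that by Lemma \ref{l10} c) the basic open sets are stable under finite unions, since $D(f_1)\cup\cdots\cup D(f_n)=D(f_1\oplus\cdots\oplus f_n)$. Hence any quasi-compact open is a single $D(f)$. To show conversely that $D(f)$ is quasi-compact, I would argue as in commutative algebra: given a covering $D(f)\subseteq \bigcup_\alpha D(g_\alpha)$, take complements to get $V(f)\supseteq V(\bI)$ where $\bI$ is the $\otimes$-ideal generated by the $g_\alpha$'s; by Proposition \ref{p13} this forces $f\in\sqrt[\otimes]{\bI}$. Since $\bI$ is the directed union of the finitely generated sub-ideals $\bI_S$ (indexed by finite $S\subseteq\{g_\alpha\}$) and the radical, being defined via termwise conditions on tensor powers in \cite[Def.~7.4.1]{AK2}, is compatible with directed unions, $f\in\sqrt[\otimes]{\bI_S}$ for some finite $S=\{g_{\alpha_1},\dots,g_{\alpha_n}\}$. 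Applying $V$ and using Lemma \ref{l10} c) again gives $D(f)\subseteq D(g_{\alpha_1}\oplus\cdots\oplus g_{\alpha_n})=\bigcup_{i=1}^n D(g_{\alpha_i})$, the desired finite subcover. The statement that $\Spec^\otimes\sC=D(1_\un)$ is quasi-compact is then the case $f=1_\un$, and closure of quasi-compact opens under finite intersection is $D(f)\cap D(g)=D(f\otimes g)$.

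For part (b), the identity $\bI(Y)=\sqrt[\otimes]{\bI(Y)}$ is immediate from Proposition \ref{p13}: any intersection of prime $\otimes$-ideals is radical, because the radical of an ideal is itself the intersection of all prime $\otimes$-ideals above it. For the characterisation of irreducibility, I would argue in both directions using Lemma \ref{l10} c). If $\bI(Y)$ fails to be prime, there exist $f,g\notin\bI(Y)$ with $f\otimes g\in\bI(Y)$; then $Y\subseteq V(f\otimes g)=V(f)\cup V(g)$ and neither $V(f)\cap Y$ nor $V(g)\cap Y$ equals $Y$, so $Y$ is reducible. Conversely, if $Y=Y_1\cup Y_2$ with each $Y_i$ properly contained in $Y$, pick $\bP_i\in Y\setminus Y_i$ and $f_i\in\bI(Y_i)\setminus\bP_i$; then $f_1\otimes f_2$ lies in every prime of $Y_1\cup Y_2=Y$, hence in $\bI(Y)$, yet neither $f_i$ does, so $\bI(Y)$ is not prime.

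Finally, for the bijection $\bP\mapsto V(\bP)$: given a prime $\bP$, $\bI(V(\bP))=\sqrt[\otimes]{\bP}=\bP$ (primes are radical by Proposition \ref{p13}), so $V(\bP)$ is irreducible by the previous paragraph, and the map $\bP\mapsto V(\bP)$ is injective. For surjectivity, any irreducible closed set $Y$ has the form $Y=V(\bI_0)$ for some $\bI_0$; setting $\bP=\bI(Y)$, which is prime by the characterisation above, one has $V(\bP)=V(\sqrt[\otimes]{\bI_0})=V(\bI_0)=Y$.

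The main technical point I expect to need care with is the interaction between $\sqrt[\otimes]{\cdot}$ and directed unions of $\otimes$-ideals in the quasi-compactness argument; once this is handled (either by unwinding the definition in \cite[Def.~7.4.1]{AK2} or by directly replacing $f$ with an appropriate tensor power in the style of the proof of Proposition \ref{p13}), the rest is formal transport of the classical commutative algebra argument.
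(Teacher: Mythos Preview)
Your proposal is correct and follows precisely the route the paper intends: its own proof reads in full ``As usual: see Lemma \ref{l10} b) and c), \cite[Ch.~II, \S 4, Prop.~11, 12 and 14]{bbki} and \cite[I, (1.1.4)]{EGA}'', i.e.\ the standard $\Spec R$ argument transported via Lemma \ref{l10} and Proposition \ref{p13}. Your one flagged technical point---that $\sqrt[\otimes]{\cdot}$ commutes with directed unions---is indeed immediate from \cite[Def.~7.4.1]{AK2}, where $f\in\sqrt[\otimes]{\bI}$ is a condition on a single tensor power $f^{\otimes N}$ lying in $\bI$.
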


\begin{proof} As usual: see Lemma \ref{l10} b) and c),  \cite[Ch. II, \S 4, Prop. 11, 12 and 14]{bbki} and \cite[I, (1.1.4)]{EGA}. \end{proof}

Theorem \ref{t8} follows from Corollary \ref{c7}, noting also that $(F^*)^{-1}(V(f))\allowbreak=V(F(f))$ for any $f\in Ar(\sC)$.

\begin{defn}\label{d2} Let $\sC\in \Add^\otimes$. A morphism $f\in \sC$ is \emph{quasi-invertible} if it verifies the equivalent conditions of Lemma \ref{l9} c).
\end{defn}

\begin{lemma}\label{l15} If $\sC\in \Add^\rig$,\\
a)  $f:A\to B$ is quasi-invertible if and only if its right adjoint $\tilde f:A\otimes B^\vee\to \un$ is quasi-invertible.\\
b) Any quasi-invertible morphism  is strongly regular in the sense of \cite[\S 3]{os}.
\end{lemma}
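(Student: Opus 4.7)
My plan is the following.

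For part (a), I show that $f$ and $\tilde f$ generate the same $\otimes$-ideal of $\sC$, so that by Lemma \ref{l9} c) they are simultaneously quasi-invertible. The inclusion $(\tilde f)\subseteq (f)$ is immediate from Lemma \ref{l9} a): by construction $\tilde f = \ev_B\circ (f\otimes 1_{B^\vee})$ already has the form of a generator of $(f)$. For the reverse inclusion, I use the standard rigidity identity
\[ f = (\tilde f\otimes 1_B)\circ (1_A\otimes \eta_B), \]
where $\eta_B\colon \un\to B^\vee\otimes B$ is the coevaluation; this exhibits $f$ as a composition of the form required by Lemma \ref{l9} a) applied to $\tilde f$ (with $C=B$), so $f\in (\tilde f)$. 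Hence $(f)=(\tilde f)$, and $1_\un$ lies in one side iff it lies in the other.

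For part (b), I use (a) to replace $f$ by $\tilde f$ and thereby reduce to the case of a morphism $u\colon D\to \un$. Lemma \ref{l9} d) then produces a section $\sigma\colon \un\to D$ with $u\circ\sigma=1_\un$. The endomorphism $e=\sigma\circ u$ of $D$ is idempotent, since $e\circ e=\sigma\circ(u\circ\sigma)\circ u=\sigma\circ u=e$, so $u$ is a split epimorphism with canonical section $\sigma$. The plan is to read off strong regularity from this splitting: $\sigma$ plays the role of the partial inverse, $e$ is the associated idempotent on $D$, and the identity idempotent on $\un$ is paired with it. Because the construction is natural in the symmetric monoidal structure, tensor powers $u^{\otimes n}$ inherit sections $\sigma^{\otimes n}$ with idempotents $e^{\otimes n}$, which gives the data compatible with all levels of the fractional construction of \cite[\S 3]{os}.

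The main obstacle will be matching this data to O'Sullivan's precise definition of ``strongly regular'', which is tailored to the diagrams (3.1) of \loccit controlling the groups $\sC_{f^{\otimes n}}$. Concretely, one must check that the commutations encoded by those diagrams—comparing fractions at different levels $n$—are satisfied by $(u,\sigma,e,1_\un)$ and their tensor powers. I expect this verification to be a formal bookkeeping exercise in the triangle identities of rigidity combined with the relation $u\sigma=1_\un$, and once carried out it completes the proof.
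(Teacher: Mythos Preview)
Your argument for (a) is correct and is essentially a hands-on version of what the paper does by citing \cite[Lemma 6.1.5]{AK2}: both show that $f$ and $\tilde f$ generate the same $\otimes$-ideal, using the evaluation/coevaluation factorizations.

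For (b), your reduction is exactly the paper's: pass to $\tilde f$ via (a), get a morphism $u\colon D\to\un$ with a section $\sigma$ by Lemma \ref{l9} d). But from there you stop short, and what you call ``formal bookkeeping'' is in fact the entire content of the proof. Two points:
\begin{itemize}
\item You have not shown regularity. This is quick but not automatic: if $u\otimes h=0$, precompose with $\sigma\otimes 1$ to get $h=0$. The paper does this explicitly.
\item More importantly, your idempotent/tensor-power framework does not obviously connect to O'Sullivan's definition. The efficient route (used in the paper, following \cite[top p.~10]{os}) is to check that $\sC(\un,E)\xrightarrow{u\otimes -}\sC_u(\un,E)$ is surjective for every $E$. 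An element $h\in\sC_u(\un,E)$ is a morphism $h\colon D\to E$ satisfying the single symmetry constraint
\[
u\otimes h=(u\otimes h)\circ c_{D,D}.
\]
Precomposing both sides with $\sigma\otimes 1_D$ and using $u\sigma=1_\un$ gives $h=u\otimes(h\circ\sigma)$, exhibiting $h$ in the image. This one-line computation is the heart of the argument, and your proposal does not supply it; the idempotent $e=\sigma u$ and its tensor powers are not needed.
\end{itemize}
So the gap is not a wrong idea but a missing one: you correctly set the stage and then defer precisely the step that carries the weight.
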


\begin{proof} a) follows once again from (the dual of) \cite[Lemma 6.1.5]{AK2}. b) by a) and its analogue for strongly regular morphisms \cite[bot. p. 9]{os}, we may assume  that $B=\un$ in a) and therefore, by Lemma  \ref{l9} d), that $f$ has a section $g$. First, $f$ is regular: if $f\otimes h=0$, then $0=(f\otimes h)\circ (g\otimes 1)=1\otimes h=h$. As observed in \cite[top p. 10]{os}, to prove that $f$ is strongly regular, it suffices to prove that $\sC(\un,D)\by{f\otimes-}\sC_f(\un,D)$ is surjective for any $D\in \sC$. Let $h\in  \sC_f(\un,D)$, \ie $h:A\to D$ makes the diagram (3.1) of \cite{os} commute. In this special case, this means that $h$ verifies the identity
\[f\otimes h=(f\otimes h)\circ c_{A,A}\]
where $c$ is the symmetry of $\sC$. Composing again with $g\otimes 1$ on the right, this gives
\begin{multline*}
h=1_\un\otimes h=(f\otimes h)\circ (g\otimes 1) =(f\otimes h)\circ c_{A,A}\circ  (g\otimes 1)\\
 = (f\otimes h)\circ(1\otimes g)\circ c_{A,\un}=f\otimes (h\circ g). 
\end{multline*}
\end{proof}

(The converse of Lemma \ref{l15} b) is false: take $\sC=\Rep_K(\G_a)$ for a field $K$, and $f:V\surj \un$ where $V$ is the standard $2$-dimensional representation of $\G_a$. Then $f$ does not have a section, but is strongly regular because it is regular and $\sC$ is fractionally closed by \cite[Lemma 3.1]{os}.)

\subsection{Relationship between spectra and $\otimes$-spectra} Let $\sC\in \Add^\otimes$. By Example \ref{ex2} and Corollary \ref{c7} d), the obvious $\otimes$-functor $L(Z(\sC))\to \sC$ induces a spectral map
\begin{equation}\label{eq2b}
 \pi:\Spec^\otimes \sC\to \Spec Z(\sC).
\end{equation}

 If $\bI$ is a $\otimes$-ideal of $\sC$, write more generally $\pi(\bI)=\bI(\un,\un)$ for the corresponding ideal of $Z(\sC)$. Conversely, if $I$ is an ideal of $Z(\sC)$, we get a $\otimes$-ideal $\bI(I)$ of $\sC$ by the formula
\[\bI(I)(C,D) = I\cdot\sC(C,D)\]
for the action of $Z(\sC)$ on $\sC$. 

\begin{lemma}\label{l8} We have $\pi(\bI(I))=I$ and $\bI(\pi(\bI))\subseteq \bI$.
\end{lemma}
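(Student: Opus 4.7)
The plan is to unwind the definitions and verify each part directly; there is no real obstacle here, only some bookkeeping about the action of $Z(\sC)$ on $\sC$-hom-groups.

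For the equality $\pi(\bI(I))=I$, I would just compute: by definition, $\pi(\bI(I))=\bI(I)(\un,\un)=I\cdot \sC(\un,\un)=I\cdot Z(\sC)=I$, the last equality because $I$ is an ideal of $Z(\sC)$ (so in particular $I\cdot 1=I$).

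For the inclusion $\bI(\pi(\bI))\subseteq \bI$, fix objects $C,D\in \sC$ and an element $f\in \bI(\pi(\bI))(C,D)$. By the defining formula $\bI(\pi(\bI))(C,D)=\pi(\bI)\cdot\sC(C,D)$, we can write $f=\sum_i \lambda_i\cdot g_i$ with $\lambda_i\in \bI(\un,\un)$ and $g_i\in \sC(C,D)$, where the action of $Z(\sC)$ on $\sC(C,D)$ is given by $\lambda\cdot g=\lambda\otimes g$ (modulo the canonical identification $\un\otimes C\simeq C$, $\un\otimes D\simeq D$). Since $\bI$ is a $\otimes$-ideal, it is stable under external tensor product with arbitrary morphisms, so $\lambda_i\otimes g_i\in \bI(\un\otimes C,\un\otimes D)=\bI(C,D)$ for each $i$. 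Finally $\bI(C,D)$ is a subgroup of $\sC(C,D)$, so the sum $f=\sum_i \lambda_i\cdot g_i$ still lies in $\bI(C,D)$, giving the desired inclusion.

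The only step that requires a bit of care is identifying the $Z(\sC)$-action with external tensoring by an endomorphism of $\un$; once that is made explicit, stability of $\bI$ under external tensor product does all the work, and no deeper property of $\sC$ (such as rigidity or abelianness) is used.
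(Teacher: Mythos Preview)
Your proof is correct and follows essentially the same line as the paper's. The paper dismisses the first equality as ``obvious'' and, for the inclusion, writes $f=\sum_\alpha z_\alpha\otimes f_\alpha$ with $z_\alpha\in\bI(\un,\un)$ and concludes $f\in\bI(C,D)$; your version simply makes explicit the identification of the $Z(\sC)$-action with tensoring by endomorphisms of $\un$ and the use of closure of $\bI$ under sums, which the paper leaves implicit.
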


\begin{proof} The first point is obvious. For the inclusion, let $C,D\in \sC$. We may write any $f\in \bI(\pi(\bI))(C,D)$ as a linear combination $\sum_\alpha z_\alpha\otimes  f_\alpha$ with $z_\alpha\in \bI(\un,\un)$ and $f_\alpha\in \sC(C,D)$. Thus $f\in \bI(C,D)$.
\end{proof}



\subsection{Two continuous sections} The map $I\mapsto \bI(I)$ does not send prime ideals to prime $\otimes$-ideals in general, so there is no map in the opposite direction to  \eqref{eq2b} \emph{a priori}. This situation improves in the rigid case:

If $\sC\in \Add^\rig$ and $I\subset Z(\sC)$ is an ideal, we get a $\otimes$-ideal of $\sC$ by the formula
\[\tr^*(I)(C,D)=\{f:C\to D\mid \tr(gf)\in I\ \forall\ g:D\to C\}.\]

\begin{prop}\label{ex1} a) We have $\bI(I)\subseteq \tr^*(I)$ and $I(\tr^*(I))=I$.\\
b) If $P$ is prime, so is $\tr^*(P)$. This defines a continuous and closed section $\sigma_\tr$ of  \eqref{eq2b}.\\ 
c) If $P$ is maximal, $\sigma_\tr(P)$ is maximal.\\
d) Any $\bP\in \Spec^\otimes \sC$ is contained in $\sigma_\tr\pi(\bP)$.\\
e) $\sigma_\tr$ is spectral if and only if the following condition holds:
\begin{quote}
For any $D\in \sC$ and any $f\in \sC(\un,D)$, the ideal 
\[\{gf\mid g\in \sC(D,\un)\}\] of $Z(\sC)$ 
is finitely generated.
\end{quote}
(This is automatic is $Z(\sC)$ is Noetherian.)
\end{prop}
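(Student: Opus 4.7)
I would handle the parts in the order (a), (d), (b), (c), (e), since part (d) is used in (c) and part (a) is used throughout. For (a), write $f\in \bI(I)(C,D)$ as $\sum z_\alpha f_\alpha$ with $z_\alpha\in I$; then $\tr(gf)=\sum z_\alpha\tr(gf_\alpha)\in I$ by $Z(\sC)$-linearity of trace, giving $\bI(I)\subseteq \tr^*(I)$. The equality $I(\tr^*(I))=I$, read as $\pi(\tr^*(I))=I$, follows by taking $g=1_\un$ and using $\tr|_{Z(\sC)}=\operatorname{id}$ for $\subseteq$, and from $\bI(I)\subseteq \tr^*(I)$ together with Lemma \ref{l8} for $\supseteq$. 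Part (d) is equally direct: for $f\in \bP(A,B)$ and $g:B\to A$ the composition $gf$ lies in $\bP$, and since $\bP$ is a $\otimes$-ideal, $\tr(gf)\in \bP(\un,\un)=\pi(\bP)$; hence $f\in \tr^*(\pi(\bP))$.

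For primeness in (b), I would use multiplicativity of trace: if $f\otimes g\in \tr^*(P)$ and $g\notin \tr^*(P)$, choose $h_2$ with $\tr(h_2 g)\notin P$; then for every $h_1$ the identity $\tr((h_1\otimes h_2)(f\otimes g))=\tr(h_1 f)\tr(h_2 g)\in P$ and primeness of $P$ force $\tr(h_1 f)\in P$, hence $f\in \tr^*(P)$. Properness follows from $\tr(1_\un)=1\notin P$, the section property from (a), and continuity from $\sigma_\tr^{-1}(V(f))=V(J_f)$, where $J_f\subseteq Z(\sC)$ is the ideal generated by the $\tr(gf)$ as $g$ runs through $\sC(\operatorname{cod}(f),\operatorname{dom}(f))$. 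For (c), embed $\tr^*(P)$ in a maximal $\otimes$-ideal $\bM$ by Lemma \ref{l10}(a); then $P\subseteq \pi(\bM)\subsetneq Z(\sC)$ forces $\pi(\bM)=P$ by maximality of $P$, and (d) gives $\bM\subseteq \tr^*(\pi(\bM))=\tr^*(P)\subseteq \bM$. For (e), Corollary \ref{c7}(a) identifies the quasi-compact opens of $\Spec^\otimes\sC$ with the $D(f)$; thus $\sigma_\tr^{-1}(D(f))$ is quasi-compact in $\Spec Z(\sC)$ iff $J_f$ is finitely generated, and rigidity (replacing $f:C\to D$ by its adjoint $\tilde f:\un\to C^\vee\otimes D$, under which $\tr(gf)$ corresponds to the evaluation pairing $\operatorname{ev}_{C^\vee\otimes D}\circ(\tilde g\otimes \tilde f)$) reduces the condition to the stated one for morphisms out of $\un$. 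When $Z(\sC)$ is Noetherian every ideal is finitely generated, whence the parenthetical remark.

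The genuine obstacle is the closedness of $\sigma_\tr$ in (b). Since $\sigma_\tr$ is a continuous section, hence a homeomorphism onto its image $Y=\{\bP:\tr^*(\pi(\bP))\subseteq \bP\}$ (the reverse inclusion is (d)), and $\sigma_\tr(V(I))=\pi^{-1}(V(I))\cap Y$, the problem reduces to showing $Y$ is closed in $\Spec^\otimes\sC$. If $Y$ had the form $V(\bJ)$, then $\bJ$ would lie in every $\tr^*(P)$, hence in $\tr^*(N)$ with $N$ the nilradical of $Z(\sC)$; but $V(\tr^*(N))$ generally strictly contains $Y$, since a prime $\bP\supseteq \tr^*(N)$ need not satisfy $\tr^*(\pi(\bP))\subseteq \bP$. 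I expect closedness to require either a trace-closure procedure on $\otimes$-ideals (in the spirit of \cite[\S 3]{os}) or an explicit construction separating $\bP\notin Y$ from $Y$ by a basic open; this is the step I would want to revisit with the author's detailed argument in hand.
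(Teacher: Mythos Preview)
Your treatment of (a), (d), (e), and of the primeness, section and continuity parts of (b) is essentially the paper's; your continuity computation $\sigma_\tr^{-1}(V(f))=V(J_f)$ is in fact a bit more explicit than the paper's version $\sigma_\tr^{-1}(V(\bI))=V(\pi(\bI))$. For (c) you take a different route: the paper reduces to $P=0$ by replacing $\sC$ with $Z(\sC)/P\otimes_{Z(\sC)}\sC$ (so that the centre becomes a field and $\tr^*(0)=\sN_\sC$ is the unique maximal $\otimes$-ideal, by \cite[Prop.~7.1.4~b)]{AK2}), whereas you embed $\sigma_\tr(P)$ in a maximal $\bM$ and use (d) plus maximality of $P$ to force $\bM=\sigma_\tr(P)$. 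Both are valid; yours avoids the base-change step.

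For the closedness of $\sigma_\tr$ in (b), which you flag as the genuine obstacle, the paper does not try to describe the image $Y$ abstractly as you do. Instead it asserts the explicit formula
\[\sigma_\tr(V(I))=V(\tr^*(I))\]
for every ideal $I\subseteq Z(\sC)$. The inclusion $\subseteq$ is monotonicity of $\tr^*$. For $\supseteq$, the paper appeals to (a): given $\bQ\supseteq\tr^*(I)$, one has $\pi(\bQ)\supseteq\pi(\tr^*(I))=I$, so $P:=\pi(\bQ)\in V(I)$, and one wants $\bQ=\tr^*(P)$; part (d) gives $\bQ\subseteq\tr^*(P)$. You were circling this: your candidate $\bJ=\tr^*(N)$ with $N$ the nilradical is, up to radical, precisely $\tr^*(0)$, so the closed set you were looking for is $V(\tr^*(I))$. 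The remaining inclusion $\tr^*(P)\subseteq\bQ$ is where the paper's ``thanks to a)'' is laconic, and your instinct that this step deserves scrutiny is fair; but the shape of the argument you were missing is the direct computation of $\sigma_\tr(V(I))$ rather than an abstract description of $Y$.
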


\begin{proof} a) is obvious. 
b) For the first point, it suffices as usual to show that  $f\otimes g\notin \tr^*(P)$ if $f\notin \tr^*(P)$ and $g\notin \tr^*(P)$ for $f$ and $g$ with domain $\un$. But, by hypothesis, there exist $f'$, $g'$ such that $f'\circ f\notin \tr^*(P)(\un,\un)=P$ and $g'\circ g\notin P$. Then $(f'\otimes g')\circ (f\otimes g)=(f'\circ f)\otimes (g'\otimes g)\notin P$. 

That $\sigma_\tr$ is a section of $\pi$ follows from a). Let $\bI$ be a $\otimes$-ideal of $\sC$. We claim that
\[(\tr^*)^{-1}(V(\bI))=V(\pi(\bI))\]
which will prove that $\sigma_\tr$ is continuous. Indeed, let $P\in\Spec Z(\sC)$ be such that $I\subseteq \tr^*(P)$. The $\sI(I)=I(\un,\un)\subseteq \tr^*(P)(\un,\un)=P$. Conversely if this holds, then, for any $D\in \sC$, any $f\in I(\un,D)$ and any $g\in \sC(D,\un)$, $\tr(gf)=gf\in I(\un,\un)\subseteq P$.

To see that $\sigma_\tr$ is closed, we observe that the obvious inclusion $\tr^* V(I)\subseteq V(\tr^*(I))$ is an equality for all $I$, thanks to a).

c) We reduce to $P=0$ by replacing $\sC$ with $Z(\sC)/P\allowbreak \otimes_{Z(\sC)} \sC$. Then $Z(\sC)$ is  a field and $\tr^*(0)=\sN_\sC$, which is the unique maximal proper $\otimes$-ideal of $\sC$ by \cite[Prop. 7.1.4 b)]{AK2}. (One could say that $\sC$ is \emph{$\otimes$-local} in this case.) 

d) More generally, $\bI\subseteq \tr^*\pi(\bI)$ for any $\otimes$-ideal $\bI$ (direct check).

Finally, the ideal displayed in e) is none else than $(\tr^*)^{-1}(V(f))$, and $V(g)=V(\tilde g)$ for any $g:C\to D$ where $\tilde g$ is the right adjoint of $g$.
\end{proof}

Let now $\sA\in \Ex^\rig$. If $I$ is an ideal of $Z(\sA)$, define
\[I_\oplus:=\Ker_m(\sA\to \sA\sslash I)\]
\cf Section \ref{s2a} and Notation \ref{n1}. By Lemma \ref{l7} and Remark \ref{r2}, this is the $\otimes$-ideal
\[\{f\mid e(f)\in I\}\]
(see Definition \ref{d4}).

\begin{prop} \label{p17} a) We have $ I_\oplus\subseteq \bI(I)$ and $I(I_\oplus)=I$. \\
b) For $P\in \Spec Z(\sA)$, $P_\oplus$ is prime. This defines another section of \eqref{eq2b}:
\begin{equation}\label{eq11}
\sigma:\Spec Z(\sA)\to \Spec^\otimes \sA
\end{equation}
such that $\sigma(P)\subseteq \sigma_\tr(P)$ for all $P$ (\cf Proposition \ref{ex1}). It is spectral.\\
c) Any $\bP\in \Spec^\otimes \sA$ contains $\sigma\pi(\bP)$.\\
d) For any $I$, the obvious $\otimes$-functor
\[\sA/I_\oplus\to \sA\sslash I\]
is an equivalence of categories.
\end{prop}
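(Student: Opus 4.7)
My plan is to treat the four parts in order, leaning on the support calculus developed in Definition \ref{d4} and Lemma \ref{l2}, together with the key identity that $f = (e(f)\otimes 1_B)\circ f$ for any $f\colon A\to B$, which follows from the fact that $f$ factors through the direct summand $\Supp(f)\otimes B$ of $B$ (via $\un=\Supp(f)\oplus \Supp(f)^\perp$ from \eqref{eq1}).

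For part (a), the inclusion $I_\oplus \subseteq \bI(I)$ is immediate from the identity above: if $e(f)\in I$, then $f = (e(f)\otimes 1_B)\circ f$ lies in $I\cdot \sA(A,B) = \bI(I)(A,B)$. For $I(I_\oplus)=I$, I read the left-hand side as $\pi(I_\oplus)=I_\oplus(\un,\un)=\{i\in Z(\sA)\mid e(i)\in I\}$. Since $Z(\sA)$ is absolutely flat (Proposition \ref{p1}), every principal ideal is generated by an idempotent and $(i)=(e(i))$, so $e(i)\in I$ if and only if $i\in I$.

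For part (b), primality of $P_\oplus$ follows from Lemma \ref{l2} a): if $f\otimes g\in P_\oplus$ then $e(f)e(g)=e(f\otimes g)\in P$, and primality of $P$ yields $f\in P_\oplus$ or $g\in P_\oplus$; propriety is clear since $e(1_\un)=1$. By (a), $\pi\sigma(P)=\pi(P_\oplus)=P$, so $\sigma$ is a section. The inclusion $\sigma(P)\subseteq \sigma_\tr(P)$ is the composite $P_\oplus \subseteq \bI(P)\subseteq \tr^*(P)$, using (a) and Proposition \ref{ex1} a). Continuity and spectrality reduce to the formula $\sigma^{-1}(V(f)) = V(e(f))$, since $\sigma(P)\in V(f)\iff e(f)\in P$; this is closed for any $f$, and its complement $\sigma^{-1}(D(f))=D(e(f))$ is quasi-compact open in $\Spec Z(\sA)$, proving the spectrality criterion of Definition \ref{d6}.

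For part (c), take $\bP\in \Spec^\otimes \sA$ and $f\colon A\to B$ with $e(f)\in \pi(\bP)=\bP(\un,\un)$. Since $\bP$ is a $\otimes$-ideal, $e(f)\otimes 1_B\in \bP$; since $\bP$ is closed under composition, the identity $f = (e(f)\otimes 1_B)\circ f$ gives $f\in \bP$, hence $\sigma\pi(\bP)\subseteq \bP$.

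For part (d), the functor $\sA/I_\oplus \to \sA\sslash I$ is the identity on objects (hence essentially surjective), faithful by the very definition $I_\oplus=\Ker_m(\sA\to \sA\sslash I)$, and full because the localisation functor $\sA\to \sA\sslash I$ is full by Theorem \ref{p4}. I expect the main technical point to be ensuring the factorisation identity $f=(e(f)\otimes 1_B)\circ f$ is properly deduced from the definition of $\Supp$; everything else then flows formally from Lemma \ref{l2}, Proposition \ref{p1}, and Theorem \ref{p4}.
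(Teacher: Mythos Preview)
Your proof is correct and largely parallels the paper's. The one genuine difference is in part (b): for primality of $P_\oplus$ you use the elementary identity $e(f\otimes g)=e(f)e(g)$ from Lemma \ref{l2} a) together with primality of $P$ in $Z(\sA)$, whereas the paper invokes Theorem \ref{p4} to conclude that $Z(\sA\sslash P)$ is a field and hence $\sA\sslash P$ is integral. Your route is more self-contained, avoiding the fullness theorem at this point; the paper's route has the virtue of immediately identifying $\sA/P_\oplus$ with the integral category $\sA\sslash P$ (which is what part (d) records anyway).

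Elsewhere the arguments coincide. Your identity $f=(e(f)\otimes 1_B)\circ f$ is precisely the adjoint form of Lemma \ref{l2} c), which is what the paper cites for the inclusion in (a); your absolute-flatness argument for $I(I_\oplus)=I$ spells out what the paper calls ``obvious''. Your direct computation in (c) unpacks the paper's chain $(\pi(\bP))_\oplus\subseteq \bI(\pi(\bP))\subseteq \bP$ obtained from (a) and Lemma \ref{l8}. The spectrality argument via $\sigma^{-1}(D(f))=D(e(f))$ is the same as the paper's, just phrased for single morphisms rather than arbitrary sets $X$.
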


\begin{proof} In a), the first point   follows from Lemma \ref{l2} c) and the second one is obvious.

b) For the first claim, we use the fact that $Z(\sA\sslash \sI)$ is a field (Theorem \ref{p4}), hence that $\sA\sslash \sI$ is integral (\cite[Prop. 2.5 a)]{BVK} or Proposition \ref{p2}). This defines \eqref{eq11}.

Let $X\subset Ar(\sA)$. By Lemma \ref{l7}, $P\in \Spec Z(\sA)$ is in the inverse image of $V(X)$ under  \eqref{eq11} if and only if $\IM(f)\in \sI(P)$ for all $f\in X$. By Remark \ref{r2}, this condition amounts to $e(\IM(f))\in P$ for all $f\in X$, \ie to $P\in V(\{e(\IM(f))\mid f\in X\})$. This shows the spectraliity of \eqref{eq11}, and the fact that it is a section of \eqref{eq2b} follows from a). 

c) More generally we have $(\pi(\bI))_\oplus\subseteq \bI$ for any $\bI$, as follows from a) and Lemma \ref{l8}. 

In d), the functor is faithful and (essentially) surjective by definition, and is full by Theorem \ref{p4}.
\end{proof}

\begin{rk}
Propositions \ref{ex1} and \ref{p17} give the following nice picture of $\Spec^\otimes \sC$ for $\sC\in \Add^\rig$: it is fibred over $\Spec Z(\sC)$ by the continuous map $\pi$ of \eqref{eq2b} and, for any $P\in \Spec Z(\sC)$, $\pi^{-1}(P)$ is homeomorphic to $\Spec^\otimes(\sC/P)$. There is a canonical ``maximal'' section, as well as a ``minimal''  $0$-section \eqref{eq11} when $\sC\in \Ex^\rig$.
\end{rk}

\section{Applications to universal rigid abelian $\otimes$-categories}

Let $\sC\in \Add^\rig$. We write $\lambda_\sC:\sC\to T(\sC)$ for the canonical $\otimes$-functor to the universal abelian $\otimes$-category $T(\sC)$ from \cite[Th. 5.1]{BVK}.

\subsection{(Local) abelian $\otimes$-envelopes}
The following corollary is a complement to \cite[Prop. 6.2]{BVK}:

\begin{cor}\label{c3} Suppose that $\sC$ admits a faithful $\otimes$-functor to a connected rigid abelian $\otimes$-category $\sB$. Let $F:T(\sC)\to \sB$ be the induced exact $\otimes$-functor. If $\sC$ admits an abelian $\otimes$-envelope $E(\sC)$ in the sense of \cite[Def. 6.1]{BVK}, then $E(\sC)=T(\sC)\sslash M(F)$ (see Corollary \ref{c2}).\qed
\end{cor}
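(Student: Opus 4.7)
The strategy is to lift the given faithful $\otimes$-functor through $T(\sC)$, apply Corollary \ref{c2} to name the relevant maximal ideal, and then conclude by the universal property defining $E(\sC)$. The argument should be quite short.

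First, by the universal property of $T$ recalled from \cite[Th. 5.1]{BVK}, the assumed faithful $\otimes$-functor $\sC\to \sB$ is indeed of the form $F\circ \lambda_\sC$ for a (unique up to $\otimes$-iso) exact $\otimes$-functor $F:T(\sC)\to\sB$; this is the $F$ appearing in the statement. Since $\sB$ is connected, Corollary \ref{c2} applies and produces the maximal ideal $M=M(F)$ of $Z(T(\sC))$ together with a factorisation
\[T(\sC)\by{q}T(\sC)\sslash M\by{\bar F}\sB\]
in which $\bar F$ is faithful, and $q$ is a full exact $\otimes$-functor by Theorem \ref{p4}.

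Next I would observe that $T(\sC)\sslash M$ is itself a connected object of $\Ex^\rig$: it is rigid and abelian by \cite[Prop. 3.5]{BVK}, and by Theorem \ref{p4} its centre equals $Z(T(\sC))/M$, which is a field. Furthermore the composite $\sC\by{\lambda_\sC}T(\sC)\by{q}T(\sC)\sslash M$ is faithful, because post-composing it with $\bar F$ yields the given faithful $\otimes$-functor $\sC\to \sB$.

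Finally, one invokes the universal property of the abelian $\otimes$-envelope $E(\sC)$ from \cite[Def. 6.1]{BVK} (\cf \cite[Prop. 6.2]{BVK}): any faithful $\otimes$-functor from $\sC$ to a connected rigid abelian $\otimes$-category factors essentially uniquely through $E(\sC)$ via a faithful exact $\otimes$-functor, and this factorisation is an equivalence whenever the target is itself an abelian $\otimes$-envelope. Applying this to $\sC\to T(\sC)\sslash M$ produces the claimed equivalence $E(\sC)\simeq T(\sC)\sslash M(F)$.

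The only delicate point is checking that the universal property of $E(\sC)$ recorded in \cite[Def. 6.1]{BVK} really forces the comparison $E(\sC)\to T(\sC)\sslash M(F)$ to be an equivalence, rather than merely a faithful quotient; this should follow by symmetrising the argument (both categories are minimal connected rigid abelian $\otimes$-targets of a faithful $\otimes$-functor from $\sC$), so I do not expect serious obstacles beyond a careful reading of \loccit.
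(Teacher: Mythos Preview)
Your proposal is correct and matches the paper's intent: the corollary is stated with a bare \qed, so the author treats it as immediate from Corollary~\ref{c2} together with \cite[Prop.~6.2]{BVK}, which is exactly the toolkit you invoke. For the ``delicate point'' you flag, the cleanest resolution is not a symmetry argument but the observation that, by \cite[Prop.~6.2]{BVK}, the canonical exact $\otimes$-functor $G:T(\sC)\to E(\sC)$ is already a Serre localisation; since the induced $\Phi:E(\sC)\to T(\sC)\sslash M(F)$ is faithful (being a nonzero exact $\otimes$-functor between connected categories), one gets $\Ker_o(q)=\Ker_o(\Phi\circ G)=\Ker_o(G)$, hence $E(\sC)=T(\sC)\sslash M(F)$.
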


What happens when $\sC$ does not admit an abelian $\otimes$-envelope? The answer can be seen as a more precise version of \cite[Th. 5.2.2]{coul3}:

\begin{cor}\label{c5} The faithful $\otimes$-functors to connected rigid abelian $\otimes$-categories are classified by those maximal ideals $M\in \Spec Z(T(\sC))$ such that the composition $\sC\by{\lambda_\sC} T(\sC)\to T(\sC)\sslash M$ is faithful. This set is closed, hence compact (and may be empty). We call these functors the \emph{local abelian $\otimes$-envelopes} of $\sC$.
\end{cor}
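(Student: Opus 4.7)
The strategy is to combine the universal property of $T(\sC)$ (which produces a canonical faithful embedding $\lambda_\sC$ into a rigid abelian $\otimes$-category), Corollary \ref{c2} (which attaches a unique maximal ideal to any faithful exact $\otimes$-functor into a connected target), and Proposition \ref{p11} (which gives the closedness of the locus where a prescribed family of morphisms stays nonzero).

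For the bijection, I would begin with a faithful $\otimes$-functor $G : \sC \to \sB$ with $\sB$ connected rigid abelian. Using the universal property of $T(\sC)$, $G$ extends to an exact $\otimes$-functor $F : T(\sC) \to \sB$ with $G = F \circ \lambda_\sC$. Corollary \ref{c2} produces a unique maximal ideal $M = M(F) \subset Z(T(\sC))$ such that $F$ factors as $T(\sC) \to T(\sC)\sslash M \to \sB$ with the second arrow faithful. Composing the faithful arrows $\sC \to T(\sC)\sslash M \to \sB$ recovers $G$, so in particular the composition $\sC \to T(\sC)\sslash M$ is faithful. Conversely, given any maximal ideal $M$ of $Z(T(\sC))$ for which $\sC \to T(\sC)\sslash M$ is faithful, the quotient $T(\sC)\sslash M$ is rigid abelian by \cite[Prop. 3.5]{BVK}, and its centre equals $Z(T(\sC))/M$ by Theorem \ref{p4}, a field since $M$ is maximal (and $Z(T(\sC))$ is absolutely flat by Proposition \ref{p1}); thus $T(\sC)\sslash M$ is connected and the composition is a genuine local abelian $\otimes$-envelope. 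The two constructions are mutually inverse thanks to the uniqueness in Corollary \ref{c2}.

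For the topological statement, recall that $\lambda_\sC : \sC \to T(\sC)$ is itself faithful by \cite[Th.\ 5.1]{BVK}. Set
\[ X = \{\lambda_\sC(f) \mid f \in Ar(\sC),\ f \ne 0\} \subset Ar(T(\sC)). \]
The composition $\sC \to T(\sC)\sslash M$ fails to be faithful exactly when some element of $X$ maps to zero in $T(\sC)\sslash M$, so the locus of interest is
\[ \{M \in \Spec Z(T(\sC)) \mid h(M) \ne 0\ \forall\ h \in X\}, \]
which is closed by Proposition \ref{p11} applied to $\sA = T(\sC)$. Compactness then follows from quasi-compactness of $\Spec Z(T(\sC))$ (which is in fact profinite by Proposition \ref{p1}). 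Emptiness is possible and corresponds precisely to the non-existence of any faithful $\otimes$-functor from $\sC$ into a connected rigid abelian $\otimes$-category.

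The main point requiring care is the well-definedness of the bijection: different presentations of the same local abelian $\otimes$-envelope must produce the same maximal ideal $M$, and this is guaranteed by the uniqueness clause of Corollary \ref{c2} (the ideal $M(F)$ depends only on $F$, hence only on $G$ up to equivalence of envelopes). Everything else is an essentially routine assembly of the tools already established.
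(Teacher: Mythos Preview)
Your approach is essentially the paper's: the classification follows from the universal property of $T(\sC)$ together with Corollary \ref{c2}, and closedness from Proposition \ref{p11}. The paper's own proof is a one-liner pointing only to the latter, taking the first part as implicit.

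One correction, though: your assertion that $\lambda_\sC:\sC\to T(\sC)$ is faithful by \cite[Th.\ 5.1]{BVK} is not true in general. See Theorem \ref{c4}, which computes $\Ker\lambda_\sC=\sqrt[\otimes]{0}$ in the Schur-finite case, and Proposition \ref{p7}, whose converse clause would be vacuous were $\lambda_\sC$ always faithful. Fortunately your argument does not actually use this claim: if $\lambda_\sC(f)=0$ for some $f\ne 0$, then no composition $\sC\to T(\sC)\sslash M$ can be faithful, and correspondingly $0\in X$, so the set displayed in Proposition \ref{p11} is empty (hence closed). Simply delete the sentence invoking faithfulness of $\lambda_\sC$; the rest of the argument is correct as written.
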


(Said otherwise: the compact set of corollary \ref{c5} is in $1-1$ correspondence with the set $\sH\sK(\sC)$ of \cite[Th. 5.2.2 (1)]{coul3}.)

\begin{proof} The only thing to justify is the closedness, which follows from Proposition \ref{p11}.
\end{proof}

\begin{thm} \label{p14} Let $\bI$ be a $\otimes$-ideal of $\sC$. Then the canonical functor $F:T(\sC)\to T(\sC/\bI)$ is a localisation, and its (object) kernel $\sI$ is generated by the $\Supp \lambda_\sC(f)$ for $f\in \bI$, hence the ideal $I(\sI)$ of Remark \ref{r2} equals $\pi(\bI)=\bI(\un,\un)$. The induced functor 
 \[Z(T(\sC))/\pi(\bI)\otimes_{Z(T(\sC))} T(\sC)\to T(\sC/\bI)\]
is an equivalence of $\otimes$-categories.
\end{thm}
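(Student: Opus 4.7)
My strategy is to identify a Serre $\otimes$-ideal $\sI_0 \subseteq T(\sC)$ whose Serre quotient satisfies the universal property of $T(\sC/\bI)$; this gives the localisation statement and identifies $\sI$, after which the translation to the centre is delivered by Theorem \ref{p4} and Proposition \ref{p17} d).

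Let $\sI_0$ be the Serre $\otimes$-ideal of $T(\sC)$ generated by the subobjects $\Supp \lambda_\sC(f) \in \U$ for $f \in \bI$, and set $Q := T(\sC) \sslash \sI_0 \in \Ex^\rig$ (by \cite[Prop. 3.5]{BVK}). For any $\sB \in \Ex^\rig$, the universal property of $T$ from \cite[Th. 5.1]{BVK} identifies $\otimes$-functors $H:\sC \to \sB$ with exact $\otimes$-functors $T(H):T(\sC) \to \sB$; such an extension kills $\sI_0$ if and only if $T(H)(\Supp \lambda_\sC(f)) = \Supp H(f) = 0$ for all $f \in \bI$, if and only if $H$ kills $\bI$. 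Hence $Q$ and $T(\sC/\bI)$ corepresent the same $2$-functor on $\Ex^\rig$, so there is a canonical equivalence $Q \simeq T(\sC/\bI)$ identifying $F$ with the Serre localisation $T(\sC) \to Q$. This proves that $F$ is a localisation and that $\sI = \Ker_o F$ is generated as a Serre $\otimes$-ideal by the claimed elements.

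For the centre, by Remark \ref{r1} and Lemma \ref{l5}, $I(\sI)$ is generated as an ideal of $Z(T(\sC))$ by the idempotents $e(\lambda_\sC(f))$, $f \in \bI$. The inclusion $\pi(\bI) \cdot Z(T(\sC)) \subseteq I(\sI)$ follows from Theorem \ref{p4}: for $f \in \pi(\bI)$, $\lambda_\sC(f)$ dies in $T(\sC)\sslash\sI$, so $\lambda_\sC(f) \in \Ker(Z(T(\sC)) \to Z(T(\sC)\sslash\sI)) = I(\sI)$. For the reverse inclusion in the case $f \in \bI(\un,\un)$, absolute flatness of $Z(T(\sC))$ (Proposition \ref{p1}, Proposition \ref{l4}) gives $(\lambda_\sC(f)) = (e(\lambda_\sC(f)))$, whence $e(\lambda_\sC(f)) \in \pi(\bI) \cdot Z(T(\sC))$. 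For a general $f : A \to B$ in $\bI$, pass to the adjoint $\tilde f : \un \to A^\vee \otimes B$, which lies in $\bI$ with $\Supp \lambda_\sC(\tilde f) = \Supp \lambda_\sC(f)$; the ``matrix coefficients'' $g \circ \tilde f$ for $g \in \sC(A^\vee \otimes B, \un)$ all lie in $\pi(\bI)$, and the self-duality of subobjects of $\un$ (Lemma \ref{l1} c)) together with Lemma \ref{l2} lets one recover $e(\lambda_\sC(f))$ from them inside $\pi(\bI) \cdot Z(T(\sC))$.

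With $I(\sI) = \pi(\bI) \cdot Z(T(\sC))$ in hand, Proposition \ref{p17} d) gives the desired equivalence
\[ Z(T(\sC))/\pi(\bI) \otimes_{Z(T(\sC))} T(\sC) \simeq T(\sC)/\pi(\bI)_\oplus \iso T(\sC) \sslash I(\sI) = T(\sC) \sslash \sI \simeq T(\sC/\bI). \]
The main obstacle is the reverse inclusion in the centre step for non-scalar $f$: one must reconstruct the idempotent $e(\lambda_\sC(f)) \in Z(T(\sC))$ from only $\sC$-morphisms $A^\vee \otimes B \to \un$, while $T(\sC)$ may in principle contain strictly more such maps. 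The rigid structure and the explicit construction of $T(\sC)$ from \cite[Th. 5.1]{BVK} are the tools that make this step tractable.
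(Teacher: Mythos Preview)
Your localisation argument is essentially the paper's, just reorganised: the paper first sets $\sI=\Ker_o F$, shows $T(\sC)\sslash\sI$ has the universal property of $T(\sC/\bI)$, and then separately checks that $\sI$ coincides with the Serre $\otimes$-ideal generated by the $\Supp\lambda_\sC(f)$; you fold these two steps into one by verifying the universal property directly for $T(\sC)\sslash\sI_0$. Both arguments invoke Theorem~\ref{p4} for the final displayed equivalence.

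Where you diverge is in trying to prove the clause ``$I(\sI)=\pi(\bI)$'' literally, and here there is a genuine gap --- not in your write-up, but in the statement itself. Your reverse-inclusion sketch (recover $e(\lambda_\sC(f))$ from the matrix coefficients $g\circ\tilde f$ with $g\in\sC(A^\vee\otimes B,\un)$) cannot be completed, and you are right to flag it as the obstacle. In fact the equality is false: take $\sC=\sL_\Q(n)$ and $\bI=\bP(p|q)$ any nonzero prime $\otimes$-ideal (Lemma~\ref{l12}~d)). Since $Z(\sC)=\Q$ is a field and $\bI$ is proper, $\pi(\bI)=\bI(\un,\un)=0$. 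But $\lambda_\sC$ is faithful (Lemma~\ref{l12}~c)), so any $0\ne f\in\bI$ has $\Supp\lambda_\sC(f)\ne 0$, whence $\sI\ne 0$ and $I(\sI)\ne 0$. Thus $I(\sI)\ne\pi(\bI)\cdot Z(T(\sC))$, and the displayed functor with $\pi(\bI)$ in the denominator is not an equivalence in this example.

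The paper's own proof never establishes this clause: it says only ``the last claim follows from Theorem~\ref{p4}'', which yields the equivalence with $I(\sI)$ (not $\pi(\bI)$) in the quotient. So treat that clause as a slip in the statement; your argument, stopping at $I(\sI)$ and invoking Theorem~\ref{p4}, already proves everything the paper actually proves.
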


\begin{proof} $F$ factors as a composition of exact $\otimes$-functors
\[T(\sC)\to T(\sC)\sslash \sI\by{\bar F} T(\sC/\bI)\]
where $\bar F$ is conservative, hence faithful.  Thus a $\otimes$-functor from $\sC$ to an abelian $\otimes$-category $\sA$ factors through $\sC/\bI$ if and only if its $\otimes$-exact extension to $T(\sC)$ factors through $T(\sC)\sslash \sI$. Therefore $T(\sC)\sslash \sI$ has the same $2$-universal property as $T(\sC/\bI)$, and $\bar F$ is an equivalence.

If $f\in I$, then clearly $\Supp \lambda_\sC(f)\in \sI$. Conversely, let $\sI'$ be the Serre $\otimes$-ideal generated by the $\Supp \lambda_\sC(f)$ for $f\in I$. Then the composition
\[\sC\by{\lambda_\sC} T(\sC)\to T(\sC)\sslash \sI'\]
factors through $\sC/I$.  Hence the localisation functor $T(\sC)\to T(\sC)\sslash \sI'$ factors through $T(\sC)\sslash \sI$, which implies $\sI'=\sI$. The last claim follows from Theorem \ref{p4}.
\end{proof}

\begin{ex}\label{ex3} Let $k$ be a field; for $\sim$ an adequate equivalence relation on algebraic cycles, write $\sM_\sim(k)$ for the category of motives modulo $\sim$ with rational coefficients, and let $T_\sim(k)=T(\sM_\sim(k))$. Then, if $\sim\ge \sim'$, the natural functor $T_\sim(k)\to T_{\sim'}(k)$ is a (full) localisation, and an equivalence of categories if $T_\sim(k)$ is connected.
\end{ex}


\subsection{Commutation with colimits}

\begin{prop}\label{p23} Let $(\sC_i)_{i\in I}$ be a $2$-direct system in $\Add^\rig$, and suppose that the $2$-colimit $\sC=\colim_i \sC_i$ exists. Then so does $\colim_i T(\sC_i)$ in $\Ex^\rig$, and the natural functor
\[\colim\nolimits_i T(\sC_i)\to T(\sC)\]
is an equivalence of $\otimes$-categories. In particular, $\colim_i Z(T(\sC_i))\iso Z(T(\sC))$.
\end{prop}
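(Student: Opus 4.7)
The plan is to deduce the proposition formally from the 2-adjunction of \cite[Th. 5.1]{BVK}, which asserts that $T\colon \Add^\rig \to \Ex^\rig$ is 2-left adjoint to the forgetful 2-functor; 2-left adjoints preserve 2-colimits, and essentially no content beyond this is needed.

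Explicitly, I would verify that $T(\sC)$, equipped with the obvious $\otimes$-functors $T(\sC_i)\to T(\sC)$ induced by the structure maps $\sC_i\to \sC$, satisfies the universal property of the 2-colimit of the $T(\sC_i)$ in $\Ex^\rig$. Given any $\sA\in \Ex^\rig$, one assembles a natural chain of equivalences of categories
\begin{multline*}
\Ex^\rig(T(\sC),\sA)\simeq \Add^\rig(\sC,\sA)\simeq \Add^\rig(\colim_i\sC_i,\sA)\\
\simeq \lim_i\Add^\rig(\sC_i,\sA)\simeq \lim_i\Ex^\rig(T(\sC_i),\sA),
\end{multline*}
where the first and last equivalences come from the 2-adjunction, the second from the defining universal property of $\sC=\colim_i\sC_i$ in $\Add^\rig$, and the third from the universal property of a 2-limit. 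This exhibits $T(\sC)$ as a 2-colimit of the system $(T(\sC_i))$ in $\Ex^\rig$; in particular the 2-colimit exists, and by the uniqueness of 2-colimits the canonical comparison $\otimes$-functor $\colim_i T(\sC_i)\to T(\sC)$ is an equivalence.

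For the claim on centres, I would use that a 2-direct (i.e.\ 2-filtered) system of categories has 2-colimit whose morphism sets are ordinary filtered colimits of the morphism sets in the system; this passes to $\Ex^\rig$ because the 2-colimit there is computed as in the ambient 2-category of categories, the result happening to be abelian and rigid $\otimes$. Taking $\End(\un)$ throughout then yields $\colim_i Z(T(\sC_i))\iso Z(T(\sC))$.

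The argument is essentially formal, so the only real work is bookkeeping: one must check that the adjunction equivalences above are pseudo-natural in $\sA$ and in the index $i$, so that the chain composes to a pseudo-natural equivalence, and that the 2-limit $\lim_i \Ex^\rig(T(\sC_i),\sA)$ of categories does classify compatible families of exact $\otimes$-functors up to $\otimes$-natural isomorphism. These are standard coherence checks and should not pose any serious obstacle.
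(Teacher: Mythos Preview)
Your proposal is correct and follows exactly the same approach as the paper, whose entire proof reads: ``This follows from the $2$-universal property of $T$ (`a left adjoint commutes with arbitrary colimits').'' You have simply spelled out the chain of equivalences encoding that slogan and added a remark on how the statement about centres follows; the paper gives no further detail on the latter either.
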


\begin{proof} This follows from the $2$-universal property of $T$ (``a left adjoint commutes with arbitrary colimits'').
\end{proof}

\subsection{Prime $\otimes$-ideals and abelian $\otimes$-envelopes}
Let $\sC\in \Add^\rig$. 
The map \eqref{eq11} for $\sA=T(\sC)$ and the contravariance of $\Spec^\otimes$ define a continuous map
\begin{equation}\label{eq2a}
\Spec Z(T(\sC))\to \Spec^\otimes \sC.
\end{equation}



\begin{prop}\label{p15} The map \eqref{eq2a} is spectral. 
The fibre of a prime $\otimes$-ideal $\bP\in \Spec^\otimes \sC$ under \eqref{eq2a}  is in $1-1$ correspondence with the set of local abelian $\otimes$-envelopes of $\sC/\bP$. 
\end{prop}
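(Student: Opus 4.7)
The plan is to decompose \eqref{eq2a} as the composition $\lambda_\sC^* \circ \sigma$, where $\sigma:\Spec Z(T(\sC))\to \Spec^\otimes T(\sC)$ is the spectral section of Proposition \ref{p17} b) applied to $\sA=T(\sC)$, and $\lambda_\sC^*$ is spectral by Theorem \ref{t8}. Spectrality of \eqref{eq2a} follows immediately.

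For the fibre description, I would first unwind the definitions. Since $Z(T(\sC))$ is absolutely flat by Proposition \ref{p1}, every $M\in \Spec Z(T(\sC))$ is maximal. By the construction of $\sigma$ together with Proposition \ref{p17} d), the image of $M$ under \eqref{eq2a} is $\Ker_m\bigl(\sC\by{\lambda_\sC} T(\sC)\to T(\sC)\sslash M\bigr)$. Hence $M$ lies in the fibre over $\bP$ precisely when this additive kernel equals $\bP$, equivalently when the displayed composition factors through $\sC/\bP$ and the induced $\otimes$-functor $\sC/\bP\to T(\sC)\sslash M$ is faithful.

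The key step is then to match this with Corollary \ref{c5} applied to $\sC/\bP$. By Theorem \ref{p14}, $T(\sC/\bP)\simeq T(\sC)\sslash \sI$ for the Serre $\otimes$-ideal $\sI$ generated by the supports $\Supp \lambda_\sC(f)$ ($f\in \bP$), with $I(\sI)=\pi(\bP)$; moreover $Z(T(\sC/\bP))\simeq Z(T(\sC))/\pi(\bP)$. Combining this with the bijection of Theorem \ref{t1} and Remark \ref{r2}, the factorisation through $\sC/\bP$ is equivalent to $\pi(\bP)\subseteq M$ inside $Z(T(\sC))$, and when this holds $T(\sC)\sslash M$ is canonically identified with $T(\sC/\bP)\sslash \bar M$ for $\bar M\in \Spec Z(T(\sC/\bP))$ the image of $M$. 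The fibre over $\bP$ thus becomes the set of $\bar M$ for which $\sC/\bP\to T(\sC/\bP)\sslash \bar M$ is faithful, which by Corollary \ref{c5} is exactly the set of local abelian $\otimes$-envelopes of $\sC/\bP$.

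The main obstacle I anticipate is the equivalence ``$\bP\subseteq \Ker_m(\sC\to T(\sC)\sslash M)\iff \pi(\bP)\subseteq M$''. For the forward direction, given $f\in \bP$ the idempotent $e(\lambda_\sC(f))$ lies in $M$, and these idempotents generate $\pi(\bP)$ by Theorem \ref{p14}. For the converse, if $\pi(\bP)\subseteq M$ then $\Supp \lambda_\sC(f)\in \sI(M)$ by Remark \ref{r2}; since $\lambda_\sC(f)$ factors through $\Supp \lambda_\sC(f)\otimes B$ by Definition \ref{d4}, it becomes zero in $T(\sC)\sslash M$.
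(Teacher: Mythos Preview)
Your proof is correct and follows essentially the same route as the paper: spectrality via the composition $\lambda_\sC^*\circ\sigma$ (the paper's citation of ``Corollary~\ref{c7}~d)'' is a typo for the spectrality of $\lambda_\sC^*$ from Theorem~\ref{t8}), and the fibre description via Theorem~\ref{p14} and Corollary~\ref{c5}. Your version is more explicit---where the paper simply draws the diagram and invokes Theorem~\ref{p14} to say $G$ is a localisation, you unwind the equivalence $\bP\subseteq\Ker_m(\sC\to T(\sC)\sslash M)\iff I(\sI)\subseteq M$ directly, which is a useful sanity check; just be aware that the notation ``$\pi(\bP)$'' for this ideal of $Z(T(\sC))$ (inherited from the paper's statement of Theorem~\ref{p14}) is a slight abuse, since $\pi(\bP)=\bP(\un,\un)$ literally lives in $Z(\sC)$, not $Z(T(\sC))$.
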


(Considering the constructible topology on $\Spec^\otimes \sC$, we recover the closedness statement of Corollary \ref{c5} in a more conceptual way.)

\begin{proof} The first claim follows from Corolary \ref{c7} d) and Proposition \ref{p17} b). 
For the next one, 
let us first assume $\bP=0$. Then the statement follows  from Corollary \ref{c5}. In general, let $F:\sC\to \sC/\bP$ be the projection, and let $M\in \Spec Z(T(\sC))$ be in the fibre of \eqref{eq2a}. 
We then have a naturally commutative diagram
\[\xymatrix{
\sC\ar[r]\ar[d]^F& T(\sC)\ar[r]\ar[d]^{T(F)}& T(\sC)\sslash M\\
\sC/\bP\ar[r]& T(\sC/\bP)\ar[ru]_{G}
}\]
where $G$ is a localisation as a consequence of Theorem \ref{p14}. Thus $M$ defines a local abelian $\otimes$-envelope of $\sC/\bP$. Conversely, any local abelian $\otimes$-envelope of $\sC/\bP$ gives rise to such a diagram, thus comes from an ideal in the fibre of \eqref{eq2a}.
\end{proof}

\begin{cor}\label{c10} Suppose that $Z(\sC)$ is a field and that $S(\sC):=(\sC/\sN_\sC)^\natural$ is split \cite[Prop. 7.3]{BVK}. If  $\sN_\sC$ is finitely generated, the functor $\bar \pi:T(\sC)\to T(S(\sC))=S(\sC)$ of \loccit (7.1) yields a canonical decomposition
\[T(\sC)\simeq \Ker \bar \pi \times S(\sC).\]
\end{cor}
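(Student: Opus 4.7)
The plan is to apply Theorem~\ref{p14} to $\bI = \sN_\sC$, identify $\Ker\bar\pi$ (via Theorem~\ref{t1} and Remark~\ref{r2}) with a finitely generated ideal of $Z(T(\sC))$, and then exploit the absolute flatness of $Z(T(\sC))$ (Proposition~\ref{p1}) to produce a single idempotent $e\in Z(T(\sC))$ that splits $T(\sC)$.

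Since $S(\sC)$ is split rigid abelian, it lies in $\Ex^\rig$, so by $2$-universality $T(S(\sC))\simeq S(\sC)$ and $\bar\pi$ coincides with the localisation functor of Theorem~\ref{p14}. That theorem identifies the Serre $\otimes$-ideal $\sI := \Ker\bar\pi$ as generated by $\{\Supp\lambda_\sC(f)\mid f\in \sN_\sC\}$; then Theorem~\ref{t1} and Remark~\ref{r2} express the corresponding ideal $I(\sI)\subseteq Z(T(\sC))$ as the one generated by the idempotents $\{e(\lambda_\sC(f))\mid f\in \sN_\sC\}$. The crucial claim is that if $\sN_\sC$ is $\otimes$-generated by $f_1,\dots,f_n$, then $I(\sI)$ is already generated by the $n$ idempotents $e(\lambda_\sC(f_i))$. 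To see this, I would invoke Lemma~\ref{l9}~a) to write an arbitrary $f\in \sN_\sC$ as a finite sum of compositions $h\circ(f_i\otimes 1_C)\circ g$; Lemma~\ref{l2}~a), b) then imply that $e(\lambda_\sC(h\circ(f_i\otimes 1_C)\circ g))$ lies in $(e(\lambda_\sC(f_i)))$, and the sub-additivity $\Supp(\phi+\psi)\subseteq\Supp\phi+\Supp\psi$ (which follows from additivity of adjunction together with the abelian-category identity $\IM(u+v)\subseteq\IM u+\IM v$), combined with Remark~\ref{r3}, transfers the conclusion to sums. Lemma~\ref{l5} applied in $B(Z(T(\sC)))$, together with Proposition~\ref{l4}, then produces an idempotent $e\in Z(T(\sC))$ with $I(\sI)=(e)$.

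To conclude, the idempotent $e$ induces a canonical product decomposition of $T(\sC)$ as used just after Proposition~\ref{p1} (\cf \cite[Rem.~1.18]{dm}): writing $\sA_e$ for the full $\otimes$-subcategory of objects $A$ satisfying $A\simeq A\otimes\IM e$, one has $T(\sC)\simeq \sA_e\times \sA_{1-e}$. Combining Remark~\ref{r2} with Lemma~\ref{l14} yields $A\in\sI \iff e(A)\in(e) \iff \Supp A\subseteq \IM e \iff A\in \sA_e$, identifying $\sA_e = \Ker\bar\pi$; Theorem~\ref{p4} identifies the complementary factor $\sA_{1-e}$ with $T(\sC)\sslash\sI\simeq S(\sC)$, whence the claimed decomposition. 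I expect the main obstacle to be the finite-generation step, i.e.\ carefully translating $\otimes$-generation of $\sN_\sC$ in $\sC$ into ideal generation of $I(\sI)$ in $Z(T(\sC))$ via Lemmas~\ref{l9} and~\ref{l2}; everything downstream (absolute flatness forcing principality, idempotent-induced splittings) is essentially bookkeeping.
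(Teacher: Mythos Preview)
Your argument is correct, but the paper takes a different and more topological route. The paper invokes Proposition~\ref{p15} to identify the fibre of $\sN_\sC$ under the spectral map \eqref{eq2a} as a single point of $\Spec Z(T(\sC))$; finite generation of $\sN_\sC$ then makes $\Spec^\otimes\sC-\{\sN_\sC\}=D(f_1\oplus\dots\oplus f_n)$ quasi-compact open, and since \eqref{eq2a} is spectral its preimage is quasi-compact open in the profinite space $\Spec Z(T(\sC))$, hence clopen. The isolated point therefore corresponds to an idempotent, and the decomposition follows from \cite[Cor.~5.2]{BVK}. Your approach is more elementary and hands-on: you bypass the spectral machinery entirely, working directly with Theorem~\ref{p14}, the support calculus of Lemma~\ref{l2}, and absolute flatness to produce the idempotent. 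The paper's argument explains \emph{why} finite generation is the natural hypothesis (it is exactly quasi-compactness of the complementary open), whereas yours makes the idempotent explicit and shows that only soft facts about supports are needed.
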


\begin{proof} By Proposition \ref{p15}, the fibre of $\sN_\sC$ under \eqref{eq2a} has one element; if  $\sN_\sC$ is finitely generated, the open subset $\Spec^\otimes \sC -\{\sN_\sC\}$ of $\Spec^\otimes \sC$ is quasi-compact, hence clopen in $\Spec Z(T(\sC))$. Therefore the corresponding maximal ideal is generated by an idempotent, which induces the desired decomposition  by \cite[Cor. 5.2]{BVK}.
\end{proof}

\begin{rk} Of course there are many cases when the finite generation hypothesis fails, \eg from Example \ref{ex2}.
\end{rk}

\subsection{The case of an abelian category} Let $\sA\in \Ex^\rig$. The functor $\lambda_\sA:\sA\to T(\sA)$ then has a canonical exact $\otimes$-retraction $\rho_\sA:T(\sA)\to \sA$ \cite[Cor. 5.2]{BVK}. By Theorem \ref{t1}, $\rho_\sA$ factors through an exact faithful $\otimes$-functor
\begin{equation}\label{eq10}
\bar \rho_\sA:T(\sA)\sslash I\to \sA
\end{equation}
for a unique ideal $I$ of $Z(T(\sA))$ (see Notation \ref{n1}).

\begin{prop} The functor $\bar \rho_\sA$ is an equivalence of categories, and $I$ is maximal if and only if $\sA$ is connected.
\end{prop}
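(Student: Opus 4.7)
The plan is to exhibit $T(\sA)\sslash I$ as solving the same $2$-universal problem in $\Ex^\rig$ as $\sA$, whence by $2$-Yoneda the two are equivalent and $\bar\rho_\sA$ realises the equivalence. Set $\alpha:=p\circ\lambda_\sA\colon\sA\to T(\sA)\sslash I$, where $p$ is the Serre projection, so that $\bar\rho_\sA\circ\alpha=\rho_\sA\circ\lambda_\sA=\id_\sA$. Since $\bar\rho_\sA$ is exact and faithful, it reflects exact sequences (a standard property of faithful exact functors between abelian categories), and applying this to the image under $\alpha$ of a short exact sequence of $\sA$ shows that $\alpha$ is itself exact.

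For any $\sB\in\Ex^\rig$, I would chain the universal property of the Serre quotient (Theorem~\ref{p4}, using $\sI(I)=\Ker\rho_\sA$) with that of $T$ (\cite[Th.~5.1]{BVK}) to obtain natural equivalences of hom-categories
\[\Ex^\rig(T(\sA)\sslash I,\sB)\simeq\{G\in\Ex^\rig(T(\sA),\sB)\mid G|_{\sI(I)}=0\}\simeq\Ex^\rig(\sA,\sB).\]
The second identification uses: if $F\colon\sA\to\sB$ is already exact, uniqueness of extension forces its extension to $T(\sA)$ to be $F\circ\rho_\sA$, which kills $\Ker\rho_\sA$; conversely, if a functor $G\colon T(\sA)\to\sB$ kills $\Ker\rho_\sA$, it factors as $G=\bar G\circ p$, so $G\circ\lambda_\sA=\bar G\circ\alpha$ is a composition of exact functors, hence exact. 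Chasing identifications, the composite equivalence above is precomposition with $\alpha$.

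By the $2$-Yoneda lemma, $\alpha$ is an equivalence in $\Ex^\rig$; together with $\bar\rho_\sA\circ\alpha=\id$ this makes $\bar\rho_\sA$ its quasi-inverse and therefore an equivalence of categories. For the final statement, this equivalence yields $Z(T(\sA)\sslash I)\iso Z(\sA)$, while Theorem~\ref{p4} identifies the left side with $Z(T(\sA))/I$; hence $I$ is maximal if and only if $Z(\sA)$ is a field, i.e.\ if and only if $\sA$ is connected (Definition~\ref{d3}).

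The main subtlety I expect is in justifying the second equivalence at the $2$-categorical level: one must verify that natural $\otimes$-isomorphisms of functors also correspond, not just the $1$-morphisms. This reduces to the $2$-functoriality built into both universal properties and should be routine, but needs to be handled carefully for the $2$-Yoneda argument to apply.
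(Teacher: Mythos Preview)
Your argument is correct and takes a genuinely different route from the paper's. You identify $T(\sA)\sslash I$ with $\sA$ by showing they represent the same $2$-functor on $\Ex^\rig$: the chain of equivalences you describe is valid, and the key point---that an exact $\otimes$-functor $F\colon\sA\to\sB$ extends along $\lambda_\sA$ precisely to $F\circ\rho_\sA$---is a clean use of the uniqueness part of the universal property of $T$. The $2$-Yoneda step then produces a quasi-inverse to $\alpha$, which your identity $\bar\rho_\sA\circ\alpha=\id_\sA$ pins down as $\bar\rho_\sA$. The $2$-level naturality you flag as a concern is indeed routine: both the universal property of $T$ and that of the Serre quotient are stated as equivalences of groupoids, and restricting a fully faithful functor to a full subcategory preserves full faithfulness.

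The paper instead argues directly: knowing $\bar\rho_\sA$ is faithful and essentially surjective, it reduces fullness to the essential surjectivity of $\bar\lambda_\sA$ (your $\alpha$), and proves the latter by a hands-on subobject computation using that every object of $T(\sA)$ is a subquotient of one in the image of $\lambda_\sA$. Your approach is more conceptual and avoids this concrete description of $T(\sA)$, at the cost of invoking the $2$-universal properties; the paper's is more elementary and self-contained, exploiting the faithful exactness of $\bar\rho_\sA$ to force $M=\bar\lambda_\sA\bar\rho_\sA(M)$ for any subobject $M$. The treatment of the final clause on maximality of $I$ is essentially the same in both.
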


\begin{proof} Write $\bar \lambda_\sA$ for the composite $\sA\by{\lambda_\sA}T(\sA)\to T(\sA)\sslash I$, so that $\bar \rho_\sA\bar \lambda_\sA=\Id_\sA$. This already shows that $\bar \rho_\sA$ is essentially surjective; to show its fullness, it suffices then to see that $\bar \lambda_\sA$ is essentially surjective. We proceed in two steps:

1) $\bar \lambda_\sA$ is exact: indeed, its composition with the faithful exact functor $\bar \rho_\sA$ is exact.

2)  $\bar \lambda_\sA$ is essentially surjective. Since any object of $T(\sA)$, hence of $T(\sA)\sslash I$, is isomorphic to a subquotient of an object coming from $\sA$ \cite[Prop. 4.4]{BVK}, it suffices to show that the essential image of $\bar \lambda_\sA$ is stable by subobjects. Let $A\in \sA$, and let $M\subseteq \bar\lambda_\sA(A)$. By 1), $M'=\bar \lambda_\sA\bar \rho_\sA(M)$ is another subobject of $\bar\lambda_\sA(A)$. Let $M''=M+M'\subseteq \bar\lambda_\sA(A)$; then  $\bar\rho_\sA(M)=\bar \rho_\sA(M'')=\bar \rho_\sA(M')$, hence $M=M''=M'$ by using again the faithful exactness of $\bar\rho_\sA$. 

The last equivalence follows from Theorem \ref{p4} and Corollary \ref{c2}. (In this connected case, the proposition also follows directly from Corollary \ref{c3}.)
\end{proof}

\section{Schur finiteness} 

\subsection{Basics} Recall the following definition:

\begin{defn} Let $\sC\in \Add^\otimes$ be $\Q$-linear. We say that an object $C$ of $\sC$ is \emph{Schur-finite} if there exists a Schur functor $S$ \cite{dtens} such that $S(C)=0$, and that $\sC$ is \emph{Schur-finite} if all its objects are Schur finite.
\end{defn}

Schur-finiteness has the following stability properties:

\begin{prop}\label{p5} Let $\sC\in \Add^\otimes$ be $\Q$-linear.\\
o) $\un$ is Schur-finite.\\
a) Let $C, C'\in \sC$ If $C$ and $C'$ are Schur-finite, then $C\oplus C'$ and $C\otimes C'$ are Schur-finite. So is any direct summand of $C$. If $C$ is Schur-finite and dualisable, its dual is Schur-finite.\\
b) If the tensor structure of $\sC$ respects monomorphisms (\resp epimorphisms), any subobject (\resp quotient) of a Schur-finite object is Schur-finite.\\
c) Suppose that $\sC$ is abelian and that its tensor structure is right exact. Let $C'\to C\to C''\to 0$ be an exact sequence in $\sC$. Then for any Schur functor $S$, $S(C'\oplus C'')=0$ implies $S(C)=0$.\\
d) If $\sC$ is abelian and its tensor structure is exact, the full subcategory of Schur-finite objects is a Serre subcategory of $\sC$, stable under (internal) tensor product.
\end{prop}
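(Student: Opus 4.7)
My plan is to handle (o), (a), (b) by standard Schur functor manipulations, derive (c) via a filtration argument, and deduce (d) formally. For (o), I would take $S = \wedge^2$: the canonical isomorphism $\un \otimes \un \iso \un$ identifies the braiding with the identity, so $\wedge^2 \un = 0$. For (a), the four claims all reduce to classical identities for Schur functors in a $\Q$-linear symmetric monoidal setting (cf.~\cite{dtens}). The direct sum case would use the Cauchy-type decomposition $S_\lambda(C \oplus C') \simeq \bigoplus_{\mu,\nu} c^\lambda_{\mu\nu}\, S_\mu(C) \otimes S_\nu(C')$ (with $c^\lambda_{\mu\nu}$ the Littlewood--Richardson coefficients); one must then exhibit $\lambda$ such that every pair $(\mu,\nu)$ in the sum satisfies $S_\mu(C) = 0$ or $S_\nu(C') = 0$, which is a combinatorial statement about partitions. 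The summand case is immediate since $S_\lambda(C_1)$ is a natural summand of $S_\lambda(C_1 \oplus C_2)$. The tensor product case would use the Kronecker-type formula $S_\lambda(C \otimes C') \simeq \bigoplus g^\lambda_{\mu\nu}\, S_\mu(C) \otimes S_\nu(C')$ (sum over $|\mu| = |\nu| = |\lambda|$). The dual case uses the canonical isomorphism $S_\lambda(C^\vee) \simeq S_\lambda(C)^\vee$, valid in any rigid category.

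For (b), I would use the description of $S_\lambda$ as the image of a primitive idempotent $e_\lambda \in \Q[\fS_n]$ (with $n = |\lambda|$) acting on the $n$-th tensor power. If $\phi\colon C' \hookrightarrow C$ is monic and $\otimes$ preserves monomorphisms, then $\phi^{\otimes n}$ is a $\fS_n$-equivariant monomorphism, hence so is $S_\lambda(\phi)\colon S_\lambda(C') \hookrightarrow S_\lambda(C)$, whence $S_\lambda(C) = 0$ implies $S_\lambda(C') = 0$. The quotient case is dual.

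The heart of the argument is (c). I would set $C_1 = \IM(C' \to C)$, so the given sequence factors as $C' \twoheadrightarrow C_1 \hookrightarrow C$ and $0 \to C_1 \to C \to C'' \to 0$ is short exact. My plan is to construct a $\fS_n$-equivariant ``binomial'' filtration of $C^{\otimes n}$, built inductively from the short exact sequence using right exactness of $\otimes$, whose associated graded is a direct sum of $\fS_n$-translates of $C_1^{\otimes i} \otimes C''^{\otimes (n-i)}$ for $i = 0, \dots, n$. Applying the idempotent $e_\lambda$ then yields a filtration of $S_\lambda(C)$ whose graded pieces are quotients of $\bigoplus_{|\mu|+|\nu|=n} c^\lambda_{\mu\nu}\, S_\mu(C_1) \otimes S_\nu(C'')$ by the graded Cauchy formula. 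Since $C_1$ is a quotient of $C'$ and $\otimes$ is right exact (so preserves epimorphisms), part (b) yields an epimorphism $S_\mu(C') \twoheadrightarrow S_\mu(C_1)$, whence each graded piece is a quotient of the corresponding summand of $S_\lambda(C' \oplus C'') = 0$. This would force $S_\lambda(C) = 0$.

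Part (d) is then formal: stability under internal $\otimes$ and direct summands is part (a); stability under subobjects and quotients is part (b), applicable since $\otimes$ is exact; for an extension $0 \to A \to B \to C \to 0$ with $A, C$ Schur-finite, part (a) produces a single $S$ with $S(A \oplus C) = 0$, and (c) then gives $S(B) = 0$. The main obstacle is verifying the filtration argument in (c): one must check that the binomial filtration on $C^{\otimes n}$ is well-defined and $\fS_n$-equivariant using only right exactness of $\otimes$ (rather than full exactness, as in the classical treatment), and that its image under $e_\lambda$ really has graded pieces with the multiplicities dictated by the Littlewood--Richardson coefficients.
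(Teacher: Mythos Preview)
Your proposal is correct and essentially matches the paper's approach: the paper proves (o) via $\Lambda^2\un=0$, defers (a) to \cite[Cor.~1.13 and 1.18]{dtens}, calls (b) obvious, cites \cite[Prop.~2.17]{exandfaith} for (c), and deduces (d) from (a), (b), (c) exactly as you do. Your binomial filtration argument for (c) is precisely what underlies that citation, so there is no substantive difference---you have simply unpacked the references.
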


\begin{proof} o) $\Lambda^2(\un)=0$. a) is \cite[Cor. 1.13 and 1.18]{dtens}. b) is obvious. c) follows from the proof of \cite[Prop. 2.17]{exandfaith}. d) follows from a), b) and c) (see also \cite[Prop. 1.19]{dtens}).
\end{proof}

\begin{prop} In the situation of \cite[Prop. 3.2]{BVK}, $\sA$ is Schur-finite if and only if $\sI$ and $\sA\sslash \sI$ are.
\end{prop}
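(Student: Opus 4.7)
My plan is to prove the two implications separately, exploiting three things: that the localisation $q:\sA\to \sA\sslash \sI$ is an exact $\otimes$-functor essentially surjective on objects (by the construction of Serre quotients and \cite[Prop. 3.5]{BVK}); that such a $q$ commutes with Schur functors (since these are defined from tensor powers and idempotent projections, both preserved by exact $\otimes$-functors); and Deligne's plethysm calculation \cite{dtens}, which yields in any $\Q$-linear abelian $\otimes$-category with exact tensor product a canonical decomposition $S'\circ S \simeq \bigoplus_\nu S_\nu^{\oplus p_\nu}$, where the $S_\nu$ are Schur functors and the $p_\nu$ are non-negative integers, not all zero.

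The forward direction is straightforward. Schur-finiteness of $\sI$ is automatic because $\sI\subseteq \sA$ is a full $\otimes$-subcategory, so Schur functors computed in $\sI$ agree with those computed in $\sA$. Schur-finiteness of $\sA\sslash \sI$ follows because every object there is of the form $q(A)$ for some $A\in \sA$, and a Schur functor $S$ killing $A$ also kills $q(A)$, as $S(q(A))=q(S(A))=0$.

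For the reverse direction, I take $A\in \sA$. Its image $q(A)$ is Schur-finite in $\sA\sslash\sI$, so there is a Schur functor $S$ with $S(q(A))=0$. Exactness of $q$ then gives $S(A)\in \Ker_o(q) = \sI$, and Schur-finiteness of $\sI$ produces a Schur functor $S'$ with $S'(S(A))=0$. Applying the plethysm decomposition in $\sA$ yields
\[\bigoplus_\nu S_\nu(A)^{\oplus p_\nu} \simeq S'(S(A)) = 0,\]
whence $S_\nu(A)=0$ for every $\nu$ with $p_\nu>0$. Since at least one such $\nu$ exists, $A$ is Schur-finite.

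The only real obstacle is this last step: merging two Schur functors that kill $A$ only in two successive stages into a single Schur functor killing $A$. This is precisely the content of the plethysm formula as established in \cite{dtens}, together with the easy observation that $S'\circ S$ cannot be the zero functor on a $\Q$-linear tensor category (test against a finite-dimensional vector space of sufficiently large dimension), so at least one plethysm coefficient $p_\nu$ is positive. Everything else is formal from the exactness of $q$ and Proposition \ref{p5}.
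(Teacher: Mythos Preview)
Your proof is correct and follows essentially the same route as the paper's: the forward direction is immediate, and for the converse you pick $S$ killing $q(A)$, then $S'$ killing $S(A)\in\sI$, and conclude via the plethysm decomposition of $S'\circ S$ into a nontrivial sum of Schur functors applied to $A$. The only difference is cosmetic: the paper cites \cite[Ex.~6.17]{FH} for the plethysm, while you invoke \cite{dtens}, and you spell out why at least one plethysm coefficient is positive (the paper just says the sum is ``nontrivial'').
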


\begin{proof} ``Only if'' is trivial. For ``if'', let $A\in \sA$, and suppose that its image in $\sA\sslash \sI$ is killed by a Schur functor $S$. Then $S(A)\in \sI$, hence there exists another Schur functor $S'$ such that $S'(S(A))=0$. By \cite[Ex. 6.17]{FH}, this is a nontrivial direct sum of objects of the form $S''(A)$, so $A$ is Schur-finite.
\end{proof}

We also recall the following theorems of Deligne and O'Sullivan:

\begin{thm}\label{t4} a) (Deligne) Let $\sA\in \Ex^\rig$ be $\Q$-linear, Schur-finite and connected. Then there exists an extension $L/K$ and an exact (hence faithful) $\otimes$-functor $\omega:\sA\to \Vec_L^\pm$, where the latter category is that of $\Z/2$-graded finite-dimensional $L$-vector spaces (with the commutativity constraint given by the Koszul rule).\\
b) (O'Sullivan) Let $\sC\in \Ex^\rig$ be integral and Schur-finite. Then $\sC$ admits a faithful $\otimes$-functor to a category $\sA\in \Ex^\rig$ as in a), and even an initial one.
\end{thm}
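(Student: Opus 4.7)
Both parts are theorems stated in the literature: (a) is Deligne's super-tannakian recognition theorem \cite[Th. 0.6]{dtens}, and (b) is the main result of \cite{os}. I outline the strategies.

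For (a), Schur-finiteness provides exactly the growth hypothesis required by Deligne's theorem: for each $A \in \sA$, the existence of a Schur functor $S$ with $S(A)=0$ forces a polynomial bound on the dimensions of the Schur powers of $A$. Given this bound, one reduces to finitely $\otimes$-generated subcategories, each of which Deligne identifies with $\Rep(G,\epsilon)$ for a suitable affine super group scheme $G$ over an extension of $K = Z(\sA)$; the forgetful functor to $\Vec_L^\pm$ provides the desired super fibre functor, after passing to an extension $L/K$ to guarantee the existence of an $L$-point of the relevant torsor. The compatibility of these identifications as the chosen subcategory grows is built into the proof via the Tannakian formalism.

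For (b), the existence of some faithful exact $\otimes$-functor $\omega : \sC \to \Vec_L^\pm$ is immediate from (a), since ``integral'' and ``connected'' coincide in $\Ex^\rig$ by the remark following Definition \ref{d3}. The content of (b) is the existence of an \emph{initial} such pair $(L_0,\omega_0)$. O'Sullivan's strategy is: given any $(L,\omega)$ from (a), form the rigid abelian $\otimes$-subcategory $\sA(\omega) \subseteq \Vec_L^\pm$ generated by the essential image of $\omega$ and closed under subquotients, and show that, as $\omega$ varies, the Tannakian super group scheme $G_\omega$ of $\sA(\omega)$ is intrinsic to $\sC$. This intrinsicness, which rests on the integrality of $\sC$ together with the comparison of any two super fibre functors via descent along the groupoid of fibre functors, produces a canonical minimal target $\sA_0 \simeq \Rep(G,\epsilon)$ over a canonical smallest extension $L_0/K$; the induced $\omega_0$ is then the required initial object.

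The principal obstacle in (a) is the construction of the super fibre functor itself, which is the deep content of \cite{dtens} and requires substantial input from the representation theory of affine super group schemes. In (b), the difficulty is the descent/canonicity step: two super fibre functors are in general only isomorphic after a further base change, so one must carefully compare their associated Tannakian categories and descend their common intrinsic part to a minimal base, using the integrality of $\sC$ in an essential way.
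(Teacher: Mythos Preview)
The paper's own proof consists purely of citations: (a) is deduced from \cite[Prop.~2.1]{dtens} with details deferred to \cite[Ex.~2.9 b)]{exandfaith}, and (b) is simply \cite[Th.~10.10]{os}. Your sketch therefore goes considerably further than the paper does, which is not in itself a problem; your outline of (a) is a reasonable summary of Deligne's argument.

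There is, however, a genuine issue with your treatment of (b). The statement as printed has $\sC\in \Ex^\rig$, and you take this at face value, noting that integral $=$ connected there and hence that existence follows from (a). But then you do not observe the consequence: if $\sC$ is already abelian, connected and Schur-finite, then $\sC$ is itself ``a category $\sA$ as in (a)'', and the identity functor $\sC\to\sC$ is the initial faithful $\otimes$-functor. Under that reading, (b) is trivial and your long discussion of descent and canonicity is unnecessary. In fact the hypothesis is almost certainly a misprint for $\sC\in\Add^\rig$: this is how the result is used later (see the paragraph preceding Definition~\ref{d8}, and Theorem~\ref{t3}, where O'Sullivan's hulls are identified with abelian $\otimes$-envelopes of additive categories). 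With $\sC$ merely additive, the existence step is no longer immediate from (a), and O'Sullivan's actual construction proceeds via his fractional closure and related techniques (cf.\ \cite[\S 3]{os}, which the paper invokes elsewhere), rather than via a comparison-and-descent argument on super fibre functors as you describe. So your sketch of (b) is both misdirected by the typo and not an accurate account of the cited proof.
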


\begin{proof} a) follows easily from \cite[Prop. 2.1]{dtens} (for details, see \cite[Ex. 2.9 b)]{exandfaith}). b) is \cite[Th. 10.10]{os}; see also Theorem \ref{t3} below.
\end{proof}

\begin{defn} For $\sA\in \Add^\otimes$, a \emph{weak fibre functor} is a $\otimes$-functor $\sA\to \Vec_L^\pm$.
\end{defn}

\begin{prop}\label{p8} A $\Q$-linear category $\sA\in \Ex^\rig$ is Schur-finite if and only if it admits a conservative system of  weak fibre functors.
\end{prop}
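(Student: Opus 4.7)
The plan is to prove the two implications separately.

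For the forward direction, I would first invoke Corollary~\ref{c1}: the canonical family $\{\sA \to \sA\sslash M\}$ indexed by maximal ideals $M\subset Z(\sA)$ is conservative, and each $\sA\sslash M$ is connected by Theorem~\ref{p4}. Since the localisation functor is an exact $\otimes$-functor, Schur-finiteness descends from $\sA$ to each $\sA\sslash M$ by the Proposition preceding Theorem~\ref{t4}. Applying Deligne's Theorem~\ref{t4}(a) to each $\sA\sslash M$ then produces an exact faithful $\otimes$-functor $\omega_M\colon \sA\sslash M \to \Vec_{L_M}^\pm$, and the compositions $\sA \to \sA\sslash M \xrightarrow{\omega_M} \Vec_{L_M}^\pm$ give the desired conservative system of weak fibre functors, faithfulness of each $\omega_M$ ensuring that joint conservativity is preserved.

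For the converse, suppose $\{\omega_i\colon \sA \to \Vec_{L_i}^\pm\}_{i\in I}$ is a conservative system. My first observation is that each $\omega_i$ is automatically exact, as is standard for a $\otimes$-functor out of a rigid abelian $\otimes$-category (tensor being biexact and duality carrying kernels to cokernels). Corollary~\ref{c2} then yields a factorisation $\omega_i = \bar\omega_i \circ F_i$ with $F_i\colon \sA \to \sA\sslash M_i$ and $\bar\omega_i$ faithful, where $M_i = M(\omega_i)\subset Z(\sA)$ is maximal. Each connected quotient $\sA\sslash M_i$ is thus Schur-finite: for any $B \in \sA\sslash M_i$ the super-finite-dimensional image $\bar\omega_i(B)$ is killed by some Schur functor $S_\lambda$, and then $\bar\omega_i(S_\lambda(B)) = S_\lambda(\bar\omega_i(B)) = 0$ combined with faithfulness of $\bar\omega_i$ forces $S_\lambda(B) = 0$.

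The last step is to lift Schur-finiteness from the $\sA\sslash M_i$ back to $\sA$. For $A \in \sA$, by conservativity of the family and $\otimes$-functoriality, producing a Schur functor $S_\lambda$ with $S_\lambda(A) = 0$ is equivalent to finding a single $\lambda$ with $S_\lambda(\omega_i(A)) = 0$ for all $i$. By the Berele–Regev criterion, $S_\lambda(V) = 0$ in $\Vec_L^\pm$ iff $\lambda_{p+1} > q$ where $(p,q)$ is the super-dimension of $V$, so this reduces to a uniform bound on the super-dimensions $(p_i,q_i)$. \emph{This uniformity is the main obstacle.} I would attempt to control it via the profinite structure of $\Spec Z(\sA)$ from Proposition~\ref{p1}: the categorical dimension $\dim(A)\in Z(\sA)$ constrains the signed super-dimension $p_i-q_i$ through the spectral correspondence, and repeating the analysis for Schur powers of $A$ should, combined with compactness of $\Spec Z(\sA)$, pin both $p_i$ and $q_i$ simultaneously.
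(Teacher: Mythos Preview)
Your forward implication is exactly the paper's argument, with the details of ``Combine Theorem~\ref{t4}~a) and Corollary~\ref{c1}'' spelled out: Corollary~\ref{c1} supplies the conservative family $\sA\to\sA\sslash M$, Schur-finiteness passes to each quotient, and Deligne's theorem furnishes the fibre functors on the connected pieces.

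For the converse, observe first that the paper's proof invokes only Theorem~\ref{t4}~a) and Corollary~\ref{c1}, and these together establish only the forward direction; the paper gives no separate argument for $\Leftarrow$. You have correctly isolated the uniform bound on super-dimensions as the crux, but your compactness sketch cannot close the gap. Conservativity of $\{\omega_i\}$ translates, via Theorem~\ref{t1} and Corollary~\ref{c2}, into $\bigcap_i M_i=0$ in $Z(\sA)$, i.e.\ the set $\{M_i\}$ is \emph{dense} in $\Spec Z(\sA)$; it need not be all of $\Spec Z(\sA)$, so compactness of the latter gives no control, and the super-dimension along such a dense set can be unbounded. Concretely, take $\sA=\prod_{n\ge 1}\Rep_\Q(\GL_n)$ with componentwise tensor structure: this lies in $\Ex^\rig$, the coordinate projections followed by the forgetful functors form a conservative system of weak fibre functors, yet for $A=(V_n)_{n\ge 1}$ with $V_n$ the standard representation one has $S_\lambda(V_n)\ne 0$ whenever $n\ge \ell(\lambda)$, so $S_\lambda(A)\ne 0$ for every partition $\lambda$. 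Thus the converse as literally stated appears to fail, and your difficulty is not an artefact of your method but a genuine issue with the statement; the paper's terse proof does not address it either.
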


\begin{proof} Combine Theorem \ref{t4} a) and Corollary \ref{c1}.
\end{proof}

\subsection{Classification}

\begin{prop}\label{p7} Let $\sC\in \Add^\rig$. Then $\sC$ Schur-finite $\Rightarrow$ $T(\sC)$ Schur-finite; the converse is true if $\Ker(\sC\to T(\sC))$ is a nilideal of $\sC$.
\end{prop}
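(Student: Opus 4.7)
I would prove the two implications separately: the forward direction by combining a structural fact about $T(\sC)$ with the Serre property of Schur-finite objects (Proposition \ref{p5} d)), and the converse by exploiting that the identity of a nonzero object cannot be $\otimes$-nilpotent in a rigid category.

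For the forward direction, assume $\sC$ is Schur-finite. Since $\lambda_\sC:\sC\to T(\sC)$ is a $\otimes$-functor, it commutes with Schur functors up to natural isomorphism: for $C\in \sC$, a Schur functor $S$ with $S(C)=0$ satisfies $S(\lambda_\sC(C))\simeq \lambda_\sC(S(C))=0$, so every object of $\lambda_\sC(\sC)$ is Schur-finite in $T(\sC)$. The category $T(\sC)\in \Ex^\rig$ has exact tensor product (tensoring with a dualisable object is both left and right adjoint), so by Proposition \ref{p5} d) the full subcategory of Schur-finite objects in $T(\sC)$ is a Serre subcategory, hence closed under subobjects and quotients. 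By \cite[Prop. 4.4]{BVK}, every object of $T(\sC)$ is isomorphic to a subquotient of an object of the form $\lambda_\sC(C)$ with $C\in \sC$, so it follows at once that $T(\sC)$ is Schur-finite.

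For the converse, assume $T(\sC)$ is Schur-finite and that $\Ker_m(\lambda_\sC)$ is a nilideal. Fix $C\in \sC$ and, using Schur-finiteness of $\lambda_\sC(C)$, pick a Schur functor $S$ with $S(\lambda_\sC(C))=0$. By $\otimes$-naturality, $\lambda_\sC(S(C))\simeq S(\lambda_\sC(C))=0$, so $1_{S(C)}\in \Ker_m(\lambda_\sC)$. The nilideal hypothesis then yields some $n\ge 1$ with $1_{S(C)}^{\otimes n}=1_{S(C)^{\otimes n}}=0$, i.e.\ $S(C)^{\otimes n}=0$ in $\sC$. In a rigid $\otimes$-category, the snake identity realises $1_{S(C)}$ as a composite factoring through $S(C)^{\otimes n}\otimes (S(C)^{\otimes (n-1)})^\vee=0$, so $1_{S(C)}=0$, hence $S(C)=0$, and $C$ is Schur-finite.

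The main obstacle is the forward direction, which rests on the structural result \cite[Prop. 4.4]{BVK} presenting every object of $T(\sC)$ as a subquotient of an object coming from $\sC$; without such a generation statement one could not transfer Schur-finiteness from $\lambda_\sC(\sC)$ to all of $T(\sC)$. The converse, by contrast, is essentially formal, but it uses the characteristic feature that an identity morphism, being idempotent, is $\otimes$-nilpotent only if the object itself is zero in a rigid category.
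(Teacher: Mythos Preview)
Your argument is correct and matches the paper's: the forward direction uses exactly the same two ingredients (every object of $T(\sC)$ is a subquotient of one coming from $\sC$, and Schur-finite objects are closed under subquotients in $\Ex^\rig$). For the converse, note that ``nilideal'' here means composition-nilpotence, so the paper's argument is one line: $1_{S(C)}$ is an idempotent endomorphism lying in a nilideal, hence $1_{S(C)}=1_{S(C)}^n=0$; your detour through $1_{S(C)}^{\otimes n}=0$ and the snake identity reaches the same conclusion but is not what the nilideal hypothesis literally provides.
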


\begin{proof} As observed in \cite[Lemma 4.1]{BVK}, any object of $\Ab(\sC)$ is isomorphic to a subquotient of an object of the form $\iota_\sC(C)$ for $C\in \sC$. This carries over to its localisation $T(\sC)$. The first statement then follows from Proposition \ref{p5} b) since the tensor structure of $T(\sC)$ is exact. Conversely, suppose $T(\sC)$ Schur-finite, and let $C\in \sC$. By hypothesis, there is a Schur functor $S$ such that $1_{S(C)}\in \Ker(\sC\to T(\sC))$. If the latter is a nilideal, we must have $1_{S(C)}=0$, \ie $C=0$.
\end{proof}

\begin{thm}\label{t3} If $T(\sC)$ is Schur-finite, it is $2$-universal for $\otimes$-functors from $\sC$ to Schur-finite rigid abelian $\otimes$-categories. In particular, O'Sull\-iv\-an's hulls of  \cite[Lemma 10.7 and Th. 10.10]{os} mentioned in Theorem \ref{t4} b) are abelian $\otimes$-envelopes in the sense of \cite[\S 6.1]{BVK}.
\end{thm}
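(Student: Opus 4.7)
My plan is to reduce the first assertion to a formal consequence of the $2$-universal property of $T$, and then to deduce the ``in particular'' clause by comparing O'Sullivan's hull with the description of local abelian $\otimes$-envelopes in Corollary \ref{c5}.

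For the first assertion, the $2$-universal property of $T$ in \cite[Th.~5.1]{BVK} gives, for any $\otimes$-functor $F:\sC\to \sA$ with $\sA\in\Ex^\rig$, a unique (up to unique $\otimes$-isomorphism) exact tensor extension $\tilde F:T(\sC)\to\sA$. I would observe that the Schur-finite rigid abelian $\otimes$-categories form a full $2$-subcategory of $\Ex^\rig$ that is stable under exact $\otimes$-functors (Proposition \ref{p5}~d)), and that by the running hypothesis $T(\sC)$ itself lies in this subcategory. The same universal property then tautologically restricts to the claimed $2$-universal property relative to Schur-finite targets.

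For the ``in particular'' clause, I start with $\sC\in\Ex^\rig$ integral and Schur-finite, as in Theorem \ref{t4}~b). The retraction $\rho_\sC$ of \cite[Cor.~5.2]{BVK} makes $\lambda_\sC$ faithful, so its object-kernel vanishes and Proposition \ref{p7} yields that $T(\sC)$ is Schur-finite; the first part of the theorem therefore applies to $\sC$. Let $\omega:\sC\to \sA^{\mathrm{OS}}$ denote the initial faithful $\otimes$-functor of O'Sullivan \cite[Lemma 10.7, Th.~10.10]{os}, whose target is a connected Schur-finite rigid abelian $\otimes$-category (of the shape in Theorem \ref{t4}~a)). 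Applying the first part to $\omega$ produces an exact extension $T(\sC)\to\sA^{\mathrm{OS}}$, which Corollary \ref{c5} identifies with $T(\sC)\sslash M$ for a maximal ideal $M\subset Z(T(\sC))$ such that the composite $\sC\to T(\sC)\sslash M$ is faithful; so $\omega$ is already a local abelian $\otimes$-envelope in the sense of Corollary \ref{c5}.

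To upgrade this to an abelian $\otimes$-envelope as in \cite[\S 6.1]{BVK}, I would take an arbitrary faithful $\otimes$-functor $G:\sC\to \sB$ with $\sB\in\Ex^\rig$ connected, replace $\sB$ by the rigid abelian $\otimes$-subcategory generated by the image of $G$ (still Schur-finite by Proposition \ref{p5}), and apply Deligne's Theorem \ref{t4}~a) to get a faithful $\omega_\sB:\sB\to\Vec_L^\pm$. Initiality of $\omega$ then factors $\omega_\sB\circ G$ through $\omega$, and Corollary \ref{c3} translates this into a factorization of $G$ itself through $\omega$. I expect the main obstacle to be this last step: lifting the factorization from the $\Vec_L^\pm$-valued composite $\omega_\sB\circ G$ back to $G$ requires combining the faithfulness of $\omega_\sB$ with the uniqueness in the $2$-universal property of $T(\sC)$, so that the extensions of $G$ and of $\omega_\sB\circ G$ to $T(\sC)$ match compatibly via $\omega_\sB$.
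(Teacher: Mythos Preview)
Your first paragraph is exactly the paper's argument: the first assertion is a tautological restriction of the $2$-universal property of $T$ from \cite[Th.~5.1]{BVK} to Schur-finite targets, using that $T(\sC)$ lies in this full sub-$2$-category by hypothesis and that it is closed under the relevant operations by Proposition~\ref{p5}~d).

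For the ``in particular'' clause your route is more convoluted than the paper's and leaves a genuine gap. Two points:
\begin{itemize}
\item The detour through the retraction $\rho_\sC$ (which presupposes $\sC\in\Ex^\rig$, following what is likely a misprint in Theorem~\ref{t4}~b)) is superfluous: the forward implication of Proposition~\ref{p7} gives ``$\sC$ Schur-finite $\Rightarrow T(\sC)$ Schur-finite'' unconditionally for $\sC\in\Add^\rig$.
\item More importantly, once you have replaced $\sB$ by the rigid abelian $\otimes$-subcategory generated by the image of $G$ --- which is Schur-finite by Proposition~\ref{p5}~d) and still connected since it is a full $\otimes$-subcategory of a connected category --- you are done: O'Sullivan's initiality in Theorem~\ref{t4}~b) is among faithful $\otimes$-functors to \emph{all} Schur-finite connected rigid abelian categories (``$\sA$ as in a)'' means any such $\sA$, not only $\Vec_L^\pm$), so it factors $G$ through $\omega$ directly. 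Your further composition with a Deligne fibre functor $\omega_\sB$ is unnecessary and creates precisely the lifting problem you worry about. The appeal to Corollary~\ref{c3} to resolve it is circular: that corollary \emph{assumes} the existence of an abelian $\otimes$-envelope, which is what you are trying to prove.
\end{itemize}
In short, the paper's proof is just: Proposition~\ref{p5}~d) lets one replace an arbitrary target in $\Ex^\rig$ by a Schur-finite one, after which both the first assertion (via \cite[Th.~5.1]{BVK}) and O'Sullivan's universal property apply verbatim.
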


\begin{proof} This follows from \cite[Th. 5.1]{BVK} and Proposition \ref{p5} d).
\end{proof}

\begin{cor} Let $\sC\in \Add^\rig$ be $\Q$-linear and Schur-finite. Then a morphism $f\in \sC$ maps to $0$ in $T(\sC)$ if and only if it maps to $0$ via any weak fibre functor.
\end{cor}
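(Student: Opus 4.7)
The plan is to deduce this directly from Proposition \ref{p7}, Proposition \ref{p8}, and the universal property of $\lambda_\sC:\sC\to T(\sC)$. First I would observe that because $\sC$ is Schur-finite, Proposition \ref{p7} gives that $T(\sC)$ is Schur-finite (this direction requires no nilideal hypothesis). Hence by Proposition \ref{p8}, $T(\sC)$ admits a conservative family $(\tilde\omega_\alpha)$ of weak fibre functors $T(\sC)\to \Vec_{L_\alpha}^\pm$, and moreover these functors are exact (as noted in the proof of Proposition \ref{p8}, combining Theorem \ref{t4} a) with Corollary \ref{c1}).

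Next I would use the bijection between weak fibre functors on $\sC$ and on $T(\sC)$: by the $2$-universal property of $T$ (\cite[Th.~5.1]{BVK}), any $\otimes$-functor $\omega:\sC\to \Vec_L^\pm$ factors (uniquely up to unique isomorphism) as $\omega=\tilde\omega\circ\lambda_\sC$ with $\tilde\omega:T(\sC)\to \Vec_L^\pm$ exact, and conversely every exact $\tilde\omega$ gives back a weak fibre functor on $\sC$ by precomposition with $\lambda_\sC$.

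The two directions of the corollary then follow. For ``only if'', if $\lambda_\sC(f)=0$, then for any weak fibre functor $\omega:\sC\to \Vec_L^\pm$, writing $\omega=\tilde\omega\circ\lambda_\sC$ gives $\omega(f)=\tilde\omega(0)=0$. For ``if'', suppose $\omega(f)=0$ for every weak fibre functor of $\sC$. Applied to the chosen conservative family $(\tilde\omega_\alpha)$ of $T(\sC)$, this says $\tilde\omega_\alpha(\lambda_\sC(f))=0$ for all $\alpha$. Exactness of the $\tilde\omega_\alpha$ gives $\tilde\omega_\alpha(\IM\lambda_\sC(f))=\IM\tilde\omega_\alpha(\lambda_\sC(f))=0$, and conservativity of the family forces $\IM\lambda_\sC(f)=0$, hence $\lambda_\sC(f)=0$.

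There is no real obstacle: the only minor point worth checking is the standard fact that a conservative family of exact functors between abelian categories detects the vanishing of a morphism, via the image; this has already been used implicitly in the proof of Corollary \ref{c1}. Everything else is a direct application of results already in the paper.
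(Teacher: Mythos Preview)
Your proof is correct and follows exactly the approach the paper intends: the paper's own proof is the two-line ``\,`Only if' is obvious and `if' follows from Proposition \ref{p8}\,'', and you have simply unpacked how Proposition \ref{p8} (together with Proposition \ref{p7} and the universal property of $T$) yields the ``if'' direction via a conservative family of exact weak fibre functors on $T(\sC)$.
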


\begin{proof} ``Only if'' is obvious and ``if'' follows from Proposition \ref{p8}. 
\end{proof}

\begin{thm}\label{c4} For $\sC\in \Add^\rig$ Schur-finite, the map \eqref{eq2a} is a homeomorphism for the constructible topology on $\Spec^\otimes \sC$. Moreover, $\Ker(\lambda_\sC:\sC\to T(\sC))=\sqrt[\otimes]{0}$.\\ 
In particular,  $Z(T(\sC))$ is a field if and only if  $\sC$ has a unique prime $\otimes$-ideal. If $Z(\sC)$ is a field, this is equivalent to saying that $\sN_\sC$ is the only prime ideal of $\sC$.
\end{thm}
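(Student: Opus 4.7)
The plan is to prove the theorem in three steps: establish \eqref{eq2a} as a set-theoretic bijection, upgrade this to a homeomorphism for the constructible topology, then read off the radical statement and the ``in particular'' clauses.

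\textbf{Bijection.} I would invoke Proposition \ref{p15} to identify the fibre of $\bP \in \Spec^\otimes \sC$ with the set of local abelian $\otimes$-envelopes of $\sC/\bP$. Now $\sC/\bP$ is $\Q$-linear and Schur-finite (inherited from $\sC$) and integral (because $\bP$ is prime), so Theorem \ref{t4} b) supplies an initial faithful $\otimes$-functor $\omega : \sC/\bP \to \sA_0$ into a connected Schur-finite rigid abelian $\otimes$-category. Using \cite[Prop. 4.4]{BVK} (every object of $T(\sC/\bP)$ is a subquotient of one coming from $\sC/\bP$), the exact extension $T(\sC/\bP) \to \sA_0$ is essentially surjective and so, by Theorems \ref{t1} and \ref{p4}, realises $\sA_0 \simeq T(\sC/\bP)\sslash M_0$ for a necessarily maximal ideal $M_0 \subseteq Z(T(\sC/\bP))$. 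For any other maximal $M$ in the fibre, the faithful functor $\sC/\bP \to T(\sC/\bP)\sslash M$ (whose target is connected and Schur-finite by Proposition \ref{p5} d)) factors uniquely through $\omega$; the resulting exact $\otimes$-functor $\sA_0 \to T(\sC/\bP)\sslash M$ forces $M_0 \subseteq M$ via the order-preserving correspondence of Theorem \ref{t1}, and maximality then yields $M = M_0$. Thus every fibre is a singleton.

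\textbf{Constructible homeomorphism.} The map is spectral by Proposition \ref{p15}, hence continuous between the constructible topologies. The source $\Spec Z(T(\sC))$ is profinite by Proposition \ref{p1} combined with Theorem \ref{t7}, and the target, equipped with the constructible topology, is profinite by Theorems \ref{t8} and \ref{t7}. A continuous bijection between compact Hausdorff spaces is automatically a homeomorphism.

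\textbf{Radical and corollaries.} Combining Proposition \ref{p13} with the bijection and the description $\bP = \lambda_\sC^{-1}(M_\oplus)$ coming from Proposition \ref{p17} b) gives
\[\sqrt[\otimes]{0} = \bigcap_{\bP} \bP = \lambda_\sC^{-1}\Bigl(\bigcap_{M \in \Spec Z(T(\sC))} M_\oplus\Bigr).\]
For $g \in T(\sC)$, $g \in \bigcap_M M_\oplus$ iff $e(g) \in \bigcap_M M$, which is zero by the absolute flatness of $Z(T(\sC))$ (Proposition \ref{p1}); but then $\Supp(g) = 0$, forcing $g = 0$. Hence $\sqrt[\otimes]{0} = \Ker \lambda_\sC$. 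The remaining statements are immediate: the bijection transports cardinalities, an absolutely flat ring has a one-point spectrum iff it is a field, and when $Z(\sC)$ is a field the unique prime $\otimes$-ideal is forced to be $\sN_\sC$, which is prime (in fact maximal) by Proposition \ref{ex1} c). The main obstacle is the first step: pinning down each fibre to a single maximal ideal requires combining O'Sullivan's initial envelope (available only in the Schur-finite regime) with the universal property of $T$ and the ideal-correspondence of Theorem \ref{t1}; the topological and algebraic steps that follow are essentially formal consequences.
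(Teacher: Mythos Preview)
Your approach is essentially the same as the paper's, which just cites Proposition~\ref{p15} and Theorem~\ref{t3} for bijectivity and Proposition~\ref{p13} for the radical statement; you have simply unpacked these references in detail. The topological step (spectral bijection between profinite spaces, hence homeomorphism) and the radical computation via $\bigcap_M M_\oplus = 0$ are both correct and exactly what the paper's terse ``Proposition~\ref{p15} also implies that it is a homeomorphism'' and ``\cite[Th.~5.1]{BVK} and Proposition~\ref{p13}'' are pointing to.

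There is, however, one genuine misstep in the bijection argument. You invoke \cite[Prop.~4.4]{BVK} --- every object of $T(\sC/\bP)$ is a subquotient of one coming from $\sC/\bP$ --- to deduce that the exact extension $T(\sC/\bP)\to\sA_0$ is \emph{essentially surjective}. That inference is backwards: the cited proposition describes objects of $T(\sC/\bP)$, not of $\sA_0$; it gives no control over objects of O'Sullivan's hull that might lie outside the essential image. Fortunately the essential surjectivity is not needed. Corollary~\ref{c2} already produces a maximal $M_0$ and a \emph{faithful} $\otimes$-functor $T(\sC/\bP)\sslash M_0\hookrightarrow \sA_0$, so $M_0$ lies in the fibre. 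For any other $M$ in the fibre, the initial property of $\omega$ yields $\sA_0\to T(\sC/\bP)\sslash M$, and the composite $T(\sC/\bP)\sslash M_0\to\sA_0\to T(\sC/\bP)\sslash M$ agrees (by the universal property of $T$) with the canonical localisation, forcing $M_0\subseteq M$ and hence $M_0=M$. Alternatively, and this is what the paper does, you can simply cite Theorem~\ref{t3}: it asserts directly that O'Sullivan's hull is an abelian $\otimes$-envelope in the sense of \cite[\S 6.1]{BVK}, whence the fibre is a singleton by (the reasoning behind) Corollary~\ref{c3}.
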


\begin{proof} The bijectivity of  \eqref{eq2a} follows from Proposition \ref{p15} and Theorem \ref{t3}; Proposition \ref{p15} also implies that it is a homeomorphism. The second (\resp third) claim then follows from \cite[Th. 5.1]{BVK} and Proposition \ref{p13} (\resp from Proposition \ref{ex1}). 
\end{proof}




As an application, we get the following partial refinement of  \cite[Prop. 8.5]{BVK}:

\begin{cor}\label{c9} Let $k$ be a field and, for an adequate equivalence relation $\sim$ on algebraic cycles, let $\sM_\sim^\ab(k)$ be the thick subcategory of $\sM_\sim(k)$ generated by Artin motives and motives of abelian varieties. Then\\
a) the functor 
\[\sM_\tnil^\ab(k)\to T(\sM_\tnil^\ab(k))\]
is faithful.\\
b) $\sM_\tnil^\ab(k)\to \sM_\num^\ab(k)$ is an equivalence of categories if and only if $T(\sM_\tnil^\ab(k))$ is connected.
\end{cor}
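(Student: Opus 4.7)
Abbreviate $\sC := \sM_\tnil^\ab(k)$. Motives of abelian varieties are Kimura-finite (Kimura) and Artin motives trivially so; by the stability properties of Proposition \ref{p5} (direct sums, summands, tensor products, duals, all preserved under thick subcategories), $\sC$ is a $\Q$-linear Kimura-finite, hence Schur-finite, rigid additive $\otimes$-category, so Theorem \ref{c4} applies. Moreover $Z(\sC) = \Q$.

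\textbf{Proof of (a).} Theorem \ref{c4} gives $\Ker(\lambda_\sC) = \sqrt[\otimes]{0}$. But the tensor nilradical of $\sC$ is $0$ by the very construction of smash-nilpotent equivalence: any morphism $f$ with $f^{\otimes n} = 0$ is by definition trivial in $\sC$. Hence $\lambda_\sC$ is faithful.

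\textbf{Proof of (b).} By Theorem \ref{c4} together with $Z(\sC) = \Q$, $T(\sC)$ is connected if and only if $\sN_\sC$ is the unique prime $\otimes$-ideal of $\sC$; call this condition $(*)$. For $(\Rightarrow)$: assume $\sC \simeq \sM_\num^\ab(k)$. Then $\sC$ is semisimple abelian (Jannsen). In a semisimple rigid $\otimes$-category every morphism factors as a split epi followed by a split mono, so each $\otimes$-ideal is determined by its object class and thus corresponds to a Serre $\otimes$-subcategory; Theorem \ref{t1} combined with $Z(\sC) = \Q$ then forces the only proper $\otimes$-ideal to be $0 = \sN_\sC$, which is prime since $\sC$ is integral (connectedness). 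For $(\Leftarrow)$: assume $(*)$. By Proposition \ref{p13}, $\sqrt[\otimes]{0} = \sN_\sC$; by part (a), $\sqrt[\otimes]{0} = 0$, so $\sN_\sC = 0$. By the André--Kahn theorem, applicable because $\sC$ is Kimura-finite, $\sC/\sN_\sC$ is abelian semisimple, and Jannsen's characterisation of numerical equivalence identifies it with $\sM_\num^\ab(k)$. Since $\sN_\sC = 0$ and $\sC$ is already pseudo-abelian, $\sC \to \sC/\sN_\sC \simeq \sM_\num^\ab(k)$ is an equivalence.

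\textbf{Main obstacle.} The one piece of non-formal input is the identification $\sM_\num^\ab(k) \simeq \sC/\sN_\sC$, which rests on Kimura-finiteness of abelian-type motives together with Jannsen's semisimplicity theorem and the André--Kahn description of the radical. Everything else follows directly from the machinery developed in the paper, above all Theorems \ref{c4} and \ref{t1} and Proposition \ref{p13}.
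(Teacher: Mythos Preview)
Your proof is correct and follows essentially the same approach as the paper's. For (a), you spell out what the paper leaves implicit: Theorem \ref{c4} gives $\Ker\lambda_\sC=\sqrt[\otimes]{0}$, and the tensor nilradical vanishes in $\sM_\tnil$ by definition of smash-nilpotent equivalence. For (b), the paper only writes ``follows from (a) in the same way as in \cite[Prop.~8.5]{BVK}''; your expanded argument via Theorem \ref{c4} (reducing to $\sN_\sC$ being the unique prime), Proposition \ref{p13}, and the identification $\sC/\sN_\sC\simeq \sM_\num^\ab(k)$ is precisely what that reference unwinds to. The detour through Theorem \ref{t1} in your $(\Rightarrow)$ direction is a clean way to see that a semisimple connected abelian rigid $\otimes$-category has $0$ as its only proper $\otimes$-ideal.
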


\begin{proof} a) Indeed, $\sM_\tnil^\ab(k)$ is a Kimura category, hence Schur-finite, and we apply Theorem \ref{c4}. b) then follows from a) in the same way as in \cite[Prop. 8.5]{BVK}.
\end{proof}

\begin{rk} In contrast to Theorem \ref{c4}, if $Z(\sC)$ is a field, the map $\Spec Z(T(\sC))\to \Spec^\otimes \sC$ obtained by composing with the section $\sigma_\tr$ of Proposition \ref{ex1} b) has image the maximal $\otimes$-ideal $\sN_\sC$ of $\sC$: this follows from the obvious naturality of this section.
\end{rk}

\subsection{Schur ideals} 

\begin{defn}\label{d7} Let $\sC\in \Add^\otimes$, and let $X$ be a collection of objects of $\sC$.\\ 
a) A $\otimes$-ideal $\bI$ of $\sC$ is \emph{$X$-Schur} if every $C\in X$ becomes Schur-finite in $\sC/\bI$.\\
b) The \emph{$X$-Schur locus} of $\Spec^\otimes \sC$ is
\[\bS(\sC,X) = \{\bP\in \Spec^\otimes \sC\mid \bP \text{ is $X$-Schur}\}.\]
If $X=Ob(\sC)$, we say Schur for $X$-Schur and write $\bS(\sC)$ for $\bS(\sC,X)$. If $X=\{C\}$, we say $C$-Schur for $X$-Schur and write $\bS(\sC,C)$ for $\bS(\sC,X)$.
\end{defn}

Let $\sC\in \Add^\rig$ be integral and Schur-finite. By Theorem \ref{t4}, there exists a faithful $\otimes$-functor $\omega:\sC/\bP\to \Vec_K^\pm$ for some extension $K$ of $\Q$; in other words, $\sC$ is proto-tannakian in the sense of \cite[Def. 2.8]{exandfaith}. By \loccit, Lemma 2.13, the super-dimension of $\omega(C)$ for $C\in \sC$ does not depend on the choice of $\omega$: we call it the \emph{super-dimension of $C$} and write it $\dim^\pm(C)$. 

More generally, suppose only that $\sC$ is integral. By Proposition \ref{p5} a), the full subcategory $\sC'$ of $\sC$ formed of Schur-finite objects still belongs to $\Add^\rig$, and it evidently integral and Schur-finite. Applying the above to $\sC'$, we may define the super-dimension of any Schur-finite object of $\sC$. If $C\in \sC$ is not Schur-finite, we set $\dim^\pm(C)=(\infty|\infty)$.

\begin{defn}\label{d8} Let $\sC\in \Add^\rig$, $C\in \sC$ and $\bP\in \Spec^\otimes \sC$. Letting $\pi_\bP:\sC\to \sC/\bP$ be the projection functor, we let
\[\dim^\pm_\bP(C)=\dim^\pm(\pi_\bP(C)).\]
\end{defn}

\begin{lemma} Let $C\in \sC$.\\
a) For $\bP\in \Spec^\otimes \sC$, write $\chi_\bP(C)=\Tr(1_{\pi_\bP(C)})\in Z(\sC/\bP)$. Then $\chi_\bP(C)$ is independent of $\bP$  if $\Spec^\otimes \sC$ is connected; if $\dim^\pm_\bP(C)=(p|q)$ with $(p|q)<(\infty|\infty)$, then $\chi_\bP(C)=p-q$.\\
b) Suppose $\Spec^\otimes \sC$ connected. If $\chi_\bP(C)\notin\Z$ for some (hence all) $\bP$, then $\bS(\sC,C)= \emptyset$.\\
c) If $\bP\subseteq \bQ$, $\dim^\pm_\bP(C)\ge \dim^\pm_\bQ(C)$. In particular, if $Z(\sC)$ is a field, then $\bS(\sC,C)\neq \emptyset$ $\iff$ $\dim^\pm_\sN(C)<(\infty,\infty)$.
\end{lemma}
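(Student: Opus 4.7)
The common thread is naturality of trace under $\otimes$-functors: for any such $F$, $F(\Tr(1_C))=\Tr(1_{F(C)})$, so $\chi_\bP(C)$ is the image of the single element $\tau:=\Tr(1_C)\in Z(\sC)$ under the ring map $Z(\sC)\to Z(\sC/\bP)$ induced by the quotient $\otimes$-functor $\pi_\bP$. This observation is the backbone of every assertion.

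For the value formula in (a), I would apply Theorem \ref{t4} to the full subcategory $(\sC/\bP)'$ of Schur-finite objects of $\sC/\bP$ (integral because $\bP$ is prime), obtaining a faithful $\otimes$-functor $\omega:(\sC/\bP)'\to \Vec_L^\pm$ under which $\pi_\bP(C)$ maps to the super-vector space $L^{p|q}$. Naturality then yields $\omega(\chi_\bP(C))=p-q\in L$, and the faithfulness of $\omega$ embeds $Z((\sC/\bP)')=Z(\sC/\bP)$ in $L$, forcing $\chi_\bP(C)=p-q$ in $Z(\sC/\bP)$.

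For the independence part of (a), I would exploit that, for each $n\in\Z$, the locus $V_n:=\{\bP:\chi_\bP(C)=n\}$ coincides with $V(\tau-n\cdot 1_\un)\subset\Spec^\otimes\sC$ and is therefore closed (Definition \ref{d5}). When $\pi_\bP(C)$ is Schur-finite for every $\bP$, the super-dimension $(p|q)$ of $\pi_\bP(C)$ is confined to a finite rectangle determined by any Schur annihilator of $C$ (via the standard dictionary $S_\lambda(V^{p|q})=0\iff\lambda_{p+1}>q$), so only finitely many $V_n$ are non-empty and they cover $\Spec^\otimes\sC$; each $V_n$ is then open as well, and connectedness singles out a unique non-empty one. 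Part (b) is then immediate: if $\bQ\in\bS(\sC,C)$, the value formula gives $\chi_\bQ(C)\in\Z$, whence by independence $\chi_\bP(C)\in\Z$ for every $\bP$, contradicting the hypothesis.

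For (c), $\bP\subseteq\bQ$ yields a $\otimes$-functor $\sC/\bP\to\sC/\bQ$ mapping $\pi_\bP(C)$ to $\pi_\bQ(C)$, so every Schur annihilator of the former is one of the latter; the same combinatorial dictionary forces the super-dimension to decrease componentwise, giving $\dim^\pm_\bQ(C)\le\dim^\pm_\bP(C)$. When $Z(\sC)$ is a field, the proof of Proposition \ref{ex1} c) identifies $\sN_\sC$ as the unique maximal $\otimes$-ideal of $\sC$; it is prime by Lemma \ref{l10} a), and every prime $\bP$ is contained in it, so $\dim^\pm_\bP(C)\ge\dim^\pm_{\sN_\sC}(C)$ for every $\bP$, which yields the ``in particular''. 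I expect the main obstacle to be making the independence in (a) precise when some $\pi_\bP(C)$ fail to be Schur-finite, so that the $V_n$'s no longer cover $\Spec^\otimes\sC$; one then wants to show that the Schur-finite locus itself is clopen in $\Spec^\otimes\sC$, thereby reducing to the uniform case.
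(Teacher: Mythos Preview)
Your proposal is correct, and your opening paragraph is \emph{exactly} the paper's argument. The paper's entire proof reads: ``a) The first fact is obvious since the trace commutes with $\otimes$-functors. The second one follows from \cite[Lemma 2.13 (5)]{exandfaith}. b) follows from a). c) is clear.'' That is all.

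Where you diverge is in your reading of ``independent of $\bP$''. The paper means nothing more than what your first paragraph establishes: $\chi_\bP(C)$ is the image of the single element $\tau=\Tr(1_C)\in Z(\sC)$ under $Z(\sC)\to Z(\sC/\bP)$. It does \emph{not} attempt to show that this image is a fixed integer across all $\bP$. Your $V_n$-loci argument, the finiteness of the super-dimension rectangle, and the clopen partition of $\Spec^\otimes\sC$ are all your own additions aimed at a stronger statement than the paper claims. Consequently, the obstacle you flag at the end --- handling primes where $\pi_\bP(C)$ fails to be Schur-finite --- is self-inflicted: it does not arise under the paper's weaker reading, and the paper makes no attempt to address it.

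For the value formula in (a), your argument via Theorem~\ref{t4} is the correct unpacking of the cited Lemma 2.13(5). For (b) and (c), your expansions are exactly what the paper's ``follows from a)'' and ``is clear'' are gesturing at; in particular your use of the Schur-functor dictionary $S_\lambda(V^{p|q})=0\iff\lambda_{p+1}>q$ to force componentwise monotonicity of $\dim^\pm$ under specialisation, and your invocation of Proposition~\ref{ex1} c) to identify $\sN_\sC$ as the unique maximal $\otimes$-ideal, supply the details the paper omits.
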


\begin{proof} a) The first fact is obvious since the trace commutes with $\otimes$-functors. The second one follows from 
\cite[Lemma 2.13 (5)]{exandfaith}. b) follows from a). c) is clear.
\end{proof}

\begin{rk}\label{r4} It is quite possible that $\bS(\sC) =\emptyset$ even if $\chi(C)\in \Z$ for all $C\in \sC$: this happens \eg if $\sC\in \Add^\rig$ and $T(\sC)=0$, by Theorem \ref{t4} b).
\end{rk}

\begin{prop}\label{p25} a) If $\bI$ and $\bJ$ are $X$-Schur, so is $\bI\otimes \bJ$, hence also $\bI\cap \bJ$.\\
b) Suppose that $\bI\subseteq \bJ$. If $\bI$ is $X$-Schur, so is $\bJ$; the converse is true if $\bJ/\bI$ is nil in $\sC/\bI$.\\
c) If $\sC\in \Add^\rig$, $\bI$ is $X$-Schur if and only if $\sqrt[\otimes]{\bI}$ is.\\
d) $\bI$ is $X$-Schur if and only if $\bI^*$ is (see \S \ref{s2a}).
\end{prop}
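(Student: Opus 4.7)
The plan is to prove the four parts in turn, exploiting the observation that $X$-Schurness of a $\otimes$-ideal $\bK$ depends only on its object-kernel $\bK_o=\{C\mid 1_C\in \bK\}$: indeed $C\in X$ is Schur-finite in $\sC/\bK$ iff some Schur functor $S$ satisfies $1_{S(C)}\in \bK$, iff $S(C)\in \bK_o$. This immediately settles (d), because $\bI$ and $\bI^*$ have the same object-kernel: $\bI^*\subseteq \bI$ (factoring through $C\in \bI_o$ yields a morphism in $\bI$, since $1_C\in \bI$), while $C\in \bI_o$ implies $1_C=1_C\circ 1_C\in \bI^*$, so that $\bI_o\subseteq \bI^*_o$.

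For (a), fix $C\in X$ and choose Schur functors $S_1,S_2$ with $1_{S_1(C)}\in \bI$ and $1_{S_2(C)}\in \bJ$. Then $1_{S_1(C)\otimes S_2(C)}=1_{S_1(C)}\otimes 1_{S_2(C)}\in \bI\otimes \bJ$, so $S_1(C)\otimes S_2(C)=0$ in $\sC/(\bI\otimes \bJ)$. The characteristic-zero Littlewood--Richardson decomposition $S_1\otimes S_2\simeq \bigoplus_\nu c^\nu\,S_\nu$ has nonnegative, not all zero coefficients, so applied to $C$ it expresses $S_1(C)\otimes S_2(C)$ as a direct sum in which at least one $S_\nu(C)$ appears with positive multiplicity; vanishing of the whole direct sum forces $S_\nu(C)=0$ in the quotient, proving $C$ is Schur-finite there. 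Since $\bI\otimes \bJ\subseteq \bI\cap \bJ$, the assertion for $\bI\cap \bJ$ then follows from (b).

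Part (b) is formal in both directions. If $\bI\subseteq \bJ$ and $S(C)=0$ in $\sC/\bI$, the quotient $\otimes$-functor $\sC/\bI\to \sC/\bJ$ transports this vanishing to $\sC/\bJ$. Conversely, assume $\bJ$ is $X$-Schur with $\bJ/\bI$ nil in $\sC/\bI$. Pick $S$ with $1_{S(C)}\in \bJ$; its image in $\sC/\bI$ lies in the nilideal $\bJ/\bI$, hence satisfies $(1_{S(C)})^n=0$ for some $n$, and since it is idempotent this forces $1_{S(C)}=0$ in $\sC/\bI$.

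For (c), the direction $\bI$ $X$-Schur $\Rightarrow$ $\sqrt[\otimes]{\bI}$ $X$-Schur is (b) applied to $\bI\subseteq \sqrt[\otimes]{\bI}$. For the converse, $1_{S(C)}\in \sqrt[\otimes]{\bI}$ gives, by definition of the tensor radical, some $n$ with $1_{S(C)^{\otimes n}}=1_{S(C)}^{\otimes n}\in \bI$, so $S(C)^{\otimes n}=0$ in $\sC/\bI$; iterated Littlewood--Richardson expresses $S(C)^{\otimes n}$ as a direct sum of $S_\nu(C)$'s with positive multiplicities, and again $C$ is Schur-finite in $\sC/\bI$. The main obstacle is this last step: (c) cannot be deduced verbatim from (b), because $\sqrt[\otimes]{\bI}/\bI$ is only tensor-nilpotent and not compositionally nil in general, so the idempotent argument of (b) does not directly apply; one must repeat the Schur-functor decomposition of (a) to convert tensor nilpotence back to Schur-finiteness.
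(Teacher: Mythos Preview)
Your proof is correct. Parts (a), (b), (d) match the paper's arguments essentially verbatim: the paper cites \cite[Prop.~1.6]{dtens} for (a) (the Littlewood--Richardson decomposition you spell out), does (b) exactly as you do, and for (d) gives the one-line equivalence $1_{S(C)}\in\bI\iff 1_{S(C)}\in\bI^*$, which is your object-kernel observation.

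The one genuine difference is (c). You argue directly: $1_{S(C)}\in\sqrt[\otimes]{\bI}$ gives $S(C)^{\otimes n}=0$ in $\sC/\bI$, and then you rerun the Littlewood--Richardson step from (a) to extract a single Schur functor killing $C$. The paper instead deduces (c) from (b) by invoking \cite[Lemma~7.4.2~ii)]{AK2}, which says that in the \emph{rigid} case $\sqrt[\otimes]{\bI}/\bI$ is a genuine (composition) nilideal of $\sC/\bI$; so your ``main obstacle'' disappears under the hypothesis $\sC\in\Add^\rig$ stated in (c), and the idempotent argument of (b) applies verbatim. Your route has the advantage of being self-contained and of not using rigidity at all --- so you have in fact proved (c) in greater generality than stated --- while the paper's route explains why the rigidity hypothesis is there and packages the argument as a corollary of (b).
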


\begin{proof} a) follows from \cite[Prop. 1.6]{dtens}. The first part of b) is clear; its second part  is seen as in the proof of Proposition \ref{p7}. c) follows from b) and \cite[Lemma 7.4.2 ii)]{AK2}. In d), $\bI$ is $X$-Schur $\iff$ for all $C\in X$ there exists a Schur functor $S$ such that $1_{S(C)}\in \bI$ $\iff$ $\bI^*$ is $X$-Schur. 
\end{proof}

Let $\Add_s^\otimes$ be the $1$-full, $2$-full subcategory of $\Add^\otimes$ formed of Schur-finite categories, and similarly for $\Add_s^\rig$, $\Ex_s^\otimes$ and $\Ex_s^\rig$. By Proposition \ref{p5} a) and d), the inclusion functors $\Add_s^\otimes\inj \Add^\otimes$, etc. all have a right $2$-adjoint. Similarly:

\begin{cor} The inclusions $\Add_s^\otimes\inj \Add^\otimes$ and $\Add_s^\rig\inj \Add^\rig$ have pro-left $2$-adjoints.
\end{cor}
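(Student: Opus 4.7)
The plan is to realize the pro-left $2$-adjoint by taking quotients by Schur $\otimes$-ideals. For $\sC\in \Add^\otimes$, let $\Sigma(\sC)$ be the set, partially ordered by inclusion, of those $\otimes$-ideals $\bI$ of $\sC$ that are $\operatorname{Ob}(\sC)$-Schur in the sense of Definition \ref{d7}, i.e.\ such that every object of $\sC$ becomes Schur-finite in $\sC/\bI$. I claim that
\[
L(\sC):=\{\sC/\bI\}_{\bI\in \Sigma(\sC)},
\]
with transition $\otimes$-functors induced by the inclusions $\bI\subseteq\bJ$, defines a pro-object of $\Add_s^\otimes$ and provides the pro-left $2$-adjoint to $\iota:\Add_s^\otimes\inj \Add^\otimes$; the same formula handles the rigid case.

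First I would check that $\Sigma(\sC)$ is codirected: given $\bI,\bJ\in\Sigma(\sC)$, Proposition \ref{p25} a) yields $\bI\otimes \bJ\in \Sigma(\sC)$, and since $\bI\otimes \bJ\subseteq \bI\cap \bJ$ (by the $\otimes$-ideal property applied on either side), Proposition \ref{p25} b) gives $\bI\cap\bJ\in \Sigma(\sC)$. Moreover every object of $\sC/\bI$ is the image of an object of $\sC$, and hence is Schur-finite by the very definition of $\Sigma(\sC)$, so $\sC/\bI\in\Add_s^\otimes$ (and lies in $\Add_s^\rig$ when $\sC\in \Add^\rig$, since rigidity passes to any $\otimes$-ideal quotient). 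For the universal property, I would take $\sD\in \Add_s^\otimes$ and $F\in\Add^\otimes(\sC,\iota\sD)$, and set $\bI_F:=\Ker_m(F)$. This is a $\otimes$-ideal of $\sC$ (as the kernel on morphisms of a $\otimes$-functor) and it lies in $\Sigma(\sC)$: for any $C\in\sC$, Schur-finiteness of $F(C)$ supplies a Schur functor $S$ with $S(F(C))=0$; since Schur functors commute with $\otimes$-functors up to natural isomorphism, $F(S(C))$ is a zero object of $\sD$, whence $F(1_{S(C)})=1_{F(S(C))}=0$ and $S(C)\simeq 0$ in $\sC/\bI_F$. The $2$-universal property of $\sC/\bI_F$ then yields an essentially unique factorisation $F=\bar F\circ\pi_{\bI_F}$ with $\bar F\in\Add_s^\otimes(\sC/\bI_F,\sD)$, so the evaluation map
\[
\colim_{\bI\in \Sigma(\sC)}\Add_s^\otimes(\sC/\bI,\sD)\longrightarrow \Add^\otimes(\sC,\iota\sD)
\]
is essentially surjective; injectivity uses codirectedness, since two representatives $G_k:\sC/\bI_k\to\sD$ ($k=1,2$) with $G_1\pi_{\bI_1}\simeq G_2\pi_{\bI_2}$ both factor through $\sC/(\bI_1\cap\bI_2)$ and are identified there.

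The only real point requiring care is the $2$-categorical bookkeeping: the equivalence above should take place at the level of groupoids of $\otimes$-functors and $\otimes$-natural isomorphisms, not merely isomorphism classes. This is formal, however: a $\otimes$-natural isomorphism $\eta:F_1\Rightarrow F_2$ between $\otimes$-functors $\sC\rightrightarrows \iota\sD$ has invertible components, which forces $\Ker_m(F_1)=\Ker_m(F_2)$; both functors and $\eta$ then descend uniquely to $\sC/\bI_{F_1}$ by its $2$-universal property. The rigid case runs identically, since every step (quotient by a $\otimes$-ideal, preservation of Schur-finiteness by Proposition \ref{p25}) respects rigidity.
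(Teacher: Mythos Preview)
Your argument is correct and follows precisely the route the paper intends: the paper's proof is the single sentence ``This follows from Proposition \ref{p25} a)'', and what you have written is a careful unpacking of that sentence, building the pro-object $\{\sC/\bI\}_{\bI\in\Sigma(\sC)}$ from the codirected system of Schur $\otimes$-ideals and verifying the $2$-universal property. Nothing is missing and nothing diverges from the paper's approach.
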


\begin{proof} This follows from Proposition \ref{p25} a).
\end{proof}

Let $\bI$ be $X$-Schur. Then $V(\bI)\cap \Spec^\otimes \sC\subseteq \bS(\sC,X)$. By Proposition \ref{p25} a) and b), this defines a topology on $\bS(\sC,X)$, coarser than the induced topology from $\Spec^\otimes \sC$. It would be interesting to better understand it in general: for example there may not be any finitely generated $X$-Schur ideal, and in this case $\bS(\sC,X)$ is (probably) not spectral for this topology.


\section{The free rigid $\otimes$-category on one generator} 

\subsection{Introducing the player }Consider the category $\sL\in \Add^\rig$ with distinguished object $L$ constructed in \cite[(1.26)]{dm}: for any $\sC\in \Add^\rig$, the functor $\Add^\rig(\sL,\sC)\to \sC$ sending $F$ to $F(L)$ is an equivalence of categories (where we only retain the isomorphisms in $\sC$ since $\Add^\rig(\sL,\sC)$ is a groupoid). We have $Z(\sL)=\Z[t]$. Then  $T(\sL)$ has the same universal property in $\Ex^\rig$. 

Describing $T(\sL)$ seems difficult; we restrict to describing $T(\sL_\Q)$ where $\sL_\Q$ is the pseudo-abelian $\otimes$-category obtained by tensoring morphisms with $\Q$ and taking the Karoubian hull: it is universal for $\Q$-linear rigid $\otimes$-categories. For this, we shall use the results of \cite{entovaetal}.

\subsection{The fibres of \eqref{eq2b}} The prime ideals $P$ of $\Q[t]$ may be identified to $t$ as a transcendental number over $\Q$ (for $P=(0)$) and to the algebraic numbers up to conjugation (for the maximal ideals). We write $P_\alpha$ for the  ideal corresponding to such a number $\alpha$, $\Q(\alpha)$ for the quotient field of  $\Q[t]/P_\alpha$ and $\sL_\Q(\alpha)$ for $(\Q(\alpha)\otimes_{\Q[t]}\sL_\Q)^\natural$.

\begin{prop}\label{p22} If $\alpha\notin \Z$, $\sL_\Q(\alpha)$ is abelian semi-simple, hence the fibre of $P_\alpha$ under \eqref{eq2b} consists of a single prime of $\Spec^\otimes \sL_\Q$.
\end{prop}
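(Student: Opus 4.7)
The plan has two parts: first establish abelian semisimplicity of $\sL_\Q(\alpha)$ as the substantive input, then deduce the singleton-fibre claim formally from Theorem \ref{t1} and Lemma \ref{l10}~e).

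For the first part, I would identify $\sL_\Q(\alpha) = (\Q(\alpha)\otimes_{\Q[t]}\sL_\Q)^\natural$ with Deligne's interpolation category $\underline{\operatorname{Rep}}(\GL_\alpha)$ over $\Q(\alpha)$. Via the universal property of $\sL$ recalled at the start of the section, $\sL_\Q(\alpha)$ is the free $\Q(\alpha)$-linear Karoubian rigid $\otimes$-category on a single dualisable object of categorical dimension $\alpha$, which is precisely the characterisation of $\underline{\operatorname{Rep}}(\GL_\alpha)$. The semisimplicity is then the theorem of Deligne, recalled (and crucially used) in \cite{entovaetal}: $\underline{\operatorname{Rep}}(\GL_\alpha)$ is abelian and semisimple if and only if $\alpha \notin \Z$.

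For the second part, the argument is formal. By Lemma \ref{l10}~e), $\Spec^\otimes$ is insensitive to pseudo-abelian completion, so the fibre of $\pi$ over $P_\alpha$ is naturally homeomorphic to $\Spec^\otimes \sL_\Q(\alpha)$. Now in any abelian semisimple rigid $\otimes$-category $\sA$, every $\otimes$-ideal $\bI$ is a Serre $\otimes$-ideal: any $f\in \bI$ factors as $f=ip$ with $p$ split epi and $i$ split mono through $\IM f$, and choosing $r$, $s$ with $pr=1_{\IM f}$ and $si=1_{\IM f}$ yields $1_{\IM f}=sfr\in \bI$; hence $\bI$ coincides with $\bI^*$ and is controlled by the Serre subcategory $\Ker_o(\sA\to \sA/\bI)$, which is automatically stable under external tensor product. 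Applying Theorem \ref{t1} to $\sL_\Q(\alpha)$, Serre $\otimes$-ideals correspond to ideals of $Z(\sL_\Q(\alpha))=\Q(\alpha)$; as $\Q(\alpha)$ is a field, only the zero ideal is proper, and it is prime because $\sL_\Q(\alpha)$ is integral (connected with centre a field, cf.\ the remark following Definition \ref{d3}).

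The main obstacle I anticipate is the first paragraph: one has to be careful about the precise form of $\sL_\Q(\alpha)$ versus Deligne's category, and to quote the semisimplicity threshold $\alpha \notin \Z$ (as opposed to, e.g., $\alpha \notin \Z_{\ge 0}$ which arises for $\underline{\operatorname{Rep}}(S_t)$). Once the identification and semisimplicity statement from \cite{entovaetal} are in hand, the singleton-fibre conclusion is a short formal application of Theorem \ref{t1}.
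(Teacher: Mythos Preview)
Your proof is correct and follows essentially the same approach as the paper: the semisimplicity of $\sL_\Q(\alpha)$ for $\alpha\notin\Z$ is Deligne's theorem, and the singleton-fibre claim is then formal. The paper simply cites \cite[Th.~10.5]{dS} directly (rather than \cite{entovaetal}) and leaves the ``hence'' implicit, whereas you spell it out via Theorem~\ref{t1}; your identification of the fibre with $\Spec^\otimes \sL_\Q(\alpha)$ is correct but uses slightly more than Lemma~\ref{l10}~e) alone (one also needs that primes over $P_\alpha$ correspond to primes of the base-changed category, which is immediate from Lemma~\ref{l8} and maximality of $P_\alpha$ when $\alpha$ is algebraic).
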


\begin{proof} This is \cite[Th. 10.5]{dS}.
\end{proof}

\subsection{The case of integers} For $\alpha=n\in \Z$, the situation is more interesting. Here $\sL_\Q(n)$ is the category $\Rep(\GL(n),\Q)$ of \cite[after Def. 10.2]{dS}. We still write $L$ for the image of $L$ in $\sL_\Q(n)$.

For each pair $(p,q)$ of nonnegative integers such that $p-q=n$, with $(p,q)\ne (0,0)$, write $\bP(p|q)$ for the kernel of the $\otimes$-functor 
\begin{equation}\label{eq12}
F(p|q):\sL_\Q(n)\to \Rep_\Q(\GL(p|q)) 
\end{equation}
sending $L$ to $\Q^{p|q}$: since the target category is integral, it is a prime $\otimes$-ideal.  

\begin{lemma}\label{l12} a) We have a chain of inclusions
\begin{align*}
\dots \subset \bP(p+1|q+1)\subset \bP(p|q)\subset\dots \subset \bP(n|0)\subset \sL_\Q(n) &\text{ if } n\ge 0\\
\dots \subset \bP(p+1|q+1)\subset \bP(p|q)\subset\dots \subset \bP(0|-n)\subset \sL_\Q(n) &\text{ if } n\le 0.
\end{align*}
b) We have $\bP(n|0)=\sN$ (\resp $\bP(0|n)=\sN$) if $n> 0$ (\resp if $n< 0$). If $n=0$, we set $\bP(0|0):=\sN$. \\
c) We have a fully faithful $\otimes$-functor 
\[F(\infty|\infty):\sL_\Q(n)\to \Rep_\Q(\GL(\infty|\infty)),\] 
where $\Rep_\Q(\GL(\infty|\infty))\in \Ex^\rig$ is the category constructed in  \cite{entovaetal} and denoted by $\sV_n$ in \loccit\\
d) The $\bP(p|q)$ are the only nonzero $\otimes$-ideals of $\sL_\Q(n)$.\\
e) The space $\Spec^\otimes \sL_\Q(n)$ is homeomorphic to $\N\cup \{\infty\}$ provided with the ``right order topology'' (whose closed subsets are $\N\cup \{\infty\}$ and the intervals $[0,r]$) for the map
\begin{align*}
\bM: \N\cup \{\infty\}&\to \Spec^\otimes \sL_\Q(n)\\
r&\mapsto 
\begin{cases}
\bP(n+r|r) &\text{ if } n\ge 0\\
\bP(r|-n+r) &\text{ if } n\le 0.
\end{cases}
\end{align*} 
The associated constructible topology $A(\N)$ is the Alexandrov compactification of the discrete space $\N$.
\end{lemma}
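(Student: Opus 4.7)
The strategy is to bootstrap everything from part (c), which imports the central technical input from \cite{entovaetal}, and then derive (a), (b), (d), (e) from the resulting ideal structure together with the Berele--Regev vanishing criterion for super-Schur functors.

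\emph{Parts (c) and (b).} Part (c) is essentially a citation: \cite{entovaetal} constructs $\sV_n = \Rep_\Q(\GL(\infty|\infty))$ as a rigid abelian $\Q$-linear $\otimes$-category with a distinguished object of categorical dimension $n$; by the universal property of $\sL_\Q(n)$ this yields $F(\infty|\infty)$, and the substantive content is that this functor is fully faithful. In particular, $\sL_\Q(n)$ is integral, so $0$ is a prime $\otimes$-ideal; and for each $(p,q)$ with $p-q = n$ the specialization $\sV_n \to \Rep_\Q(\GL(p|q))$ factors $F(p|q)$ through $F(\infty|\infty)$. For part (b), when $n>0$ and $(p,q) = (n,0)$ the target $\Rep_\Q(\GL(n))$ is semisimple, so $F(n|0)$ factors through $\sL_\Q(n)/\sN$; by \cite{dS,dm} this factorization is faithful, forcing $\bP(n|0) = \sN$. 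The case $n<0$ is symmetric via duality $L \mapsto L^\vee$, and $n=0$ is the stipulated convention.

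\emph{Part (a).} The Berele--Regev criterion says that in $\Rep_\Q(\GL(p|q))$, the Schur functor $S_\lambda$ annihilates $\Q^{p|q}$ precisely when $\lambda \supseteq ((q+1)^{p+1})$. The inclusion $\bP(p+1|q+1) \subseteq \bP(p|q)$ I would prove by first reducing to the analogous inclusion among the kernels of the specialization functors $\sV_n \to \Rep_\Q(\GL(p|q))$, and then invoking the description of those kernels supplied by \cite{entovaetal}. Strictness is witnessed by the Young symmetrizer $e_\lambda$ for $\lambda = ((q+1)^{p+1})$: it lies in $\bP(p|q)$ by the criterion, but not in $\bP(p+1|q+1)$ since $\lambda \not\supseteq ((q+2)^{p+2})$.

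\emph{Parts (d) and (e).} Part (d) is the technical heart. The morphism algebras $\End_{\sL_\Q(n)}(L^{\otimes r} \otimes (L^\vee)^{\otimes s})$ are the walled Brauer algebras $B_{r,s}(n)$, and at integer parameter $n$ their cellular structure (Cox--De~Visscher--Doty, and the equivalent abelian envelope analysis in \cite{entovaetal}) identifies the lattice of $\otimes$-ideals: every nonzero ideal is one of the $\bP(p|q)$. Combined with (a), this organizes $\Spec^\otimes \sL_\Q(n)$ into the totally ordered set $0 \subsetneq \dots \subsetneq \bP(n+1|1) \subsetneq \bP(n|0) = \sN$ (for $n\ge 0$, with the obvious variant for $n\le 0$), so $\bM$ is a bijection. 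The closed subsets $V(\bP(n+r|r))$ are then precisely the initial segments $\bM([0,r])$, yielding the right order topology on $\N \cup \{\infty\}$. By (d) each $\bP(n+r|r)$ is principal, generated by the Young symmetrizer for $((r+1)^{n+r+1})$; hence the corresponding $D(\cdot)$ is a quasi-compact open in $\Spec^\otimes$, which makes $\{r\}$ constructible clopen for each finite $r$. Thus $\N$ is discrete in the constructible topology, whereas the only basic constructible open neighbourhoods of $\infty$ are the cofinite sets $\{r, r+1, \dots\} \cup \{\infty\}$---that is, the Alexandrov one-point compactification $A(\N)$.

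\emph{Main obstacle.} Part (d)---the assertion that the $\bP(p|q)$ exhaust all nonzero $\otimes$-ideals---is by far the deepest point. It rests either on the cellular/radical theory of walled Brauer algebras at integer parameter, or equivalently on the explicit classification of tensor ideals that falls out of the abelian envelope construction of \cite{entovaetal}. Without this input, one obtains only the chain of primes from (a)--(c), not its exhaustiveness, and the topological description (e) collapses.
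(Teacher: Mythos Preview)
Your plan is correct and lands on the same overall architecture as the paper: (b), (c), (d) are imported from the literature, and (e) is read off from (d). The differences are in routing, not in substance.

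For (a), the paper takes a shorter path: it invokes the Duflo--Serganova $\otimes$-functor $\Rep_\Q(\GL(p+1|q+1))\to \Rep_\Q(\GL(p|q))$ sending $\Q^{p+1|q+1}$ to $\Q^{p|q}$ (\cite[\S 7.1]{entovaetal}, or the argument for \cite[(5.1)]{os}). Composing $F(p+1|q+1)$ with this functor recovers $F(p|q)$, so $\bP(p+1|q+1)\subseteq \bP(p|q)$ drops out immediately without passing through the kernel description of the specialisation functors. Your Berele--Regev argument is still needed (and is what the paper implicitly relies on) for the \emph{strictness} of the inclusions, and it also pays off later in (e) where you use it to exhibit a principal generator of each $\bP(p|q)$.

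For (d), the paper cites a different source: O'Sullivan \cite[Lemma 5.2]{os}, rather than the cellular theory of walled Brauer algebras. Both routes encode the same classification, so this is a matter of attribution.

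For (e), the paper simply says the constructible-topology statement is ``obvious''; your argument via principality of the $\bP(p|q)$ (hence quasi-compactness of each $D(e_\lambda)$) is exactly what is needed to unpack that word, and is a useful addition.
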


\begin{proof} a) follows from the existence of a $\otimes$-functor $\Rep_\Q(\GL(p+1|q+1))\to \Rep_\Q(\GL(p|q))$ sending $\Q^{p+1|q+1}$ to $\Q^{p|q}$ (the Duflo-Serganova construction, \cite[\S 7.1]{entovaetal}); or see the reasoning to obtain \cite[(5.1)]{os}. b) follows from \cite[Th. 10.4]{dS}.
c) is  \cite[Prop. 8.1.2]{entovaetal}. 
d) is \cite[Lemma 5.2]{os}. The first statement of e) follows immediately from d), and the second one is obvious.
\end{proof}


\begin{rk}\label{r5} Although Lemma \ref{l12} shows that $\sL_\Q(n)$ is ``$\otimes$-Noetherian'', it also shows that the analogue of Krull's intersection theorem fails completely in this category. Namely, $\bI = \bI^{\otimes n}$ for any $\otimes$-ideal $\bI$ of $\sL_\Q(n)$ and any $n>0$. Indeed, all such ideals are prime, hence $\sL_\Q(n)/\bI^{\otimes n}$ is integral. But $f^{\otimes n}=0$ for any $f\in \bI/\bI^{\otimes n}$.
\end{rk}

Since all $\otimes$-categories $\Rep_\Q(\GL(p|q))$ (including for $(p|q)=(\infty|\infty)$) are abelian and rigid, the functors $F(p|q)$ of \eqref{eq12} and Lemma \ref{l12} c) extend canonically to exact $\otimes$-functors
\[\bar F(p|q):T(\sL_\Q(n))\to \Rep_\Q(\GL(p|q)).\]

\begin{prop}\label{p20} a) The map \eqref{eq2a} is a homeomorphism for the constructible topology on $\Spec^\otimes \sL_\Q(n)$ and yields a ring isomorphism
\[\theta:Z(T(\sL_\Q(n)))\iso \Cont(A(\N),\Q).\]
For $r\in A(\N)$, let $\theta^*(r)=\{z\in Z(T(\sL_\Q(n)))\mid \theta(z)(r)=0\}$. Then the $\otimes$-category $T(\sL_\Q(n))\sslash \theta^*(r)$ is $\Rep_\Q(\GL(n+r|r))$ if $n> 0$, $\Rep_\Q(\GL(r|-n+r))$ if $n< 0$, or $\Rep_\Q(\GL(r+1|r+1))$ if $n=0$. (This includes the case $r=\infty$, the accumulation point of $A(\N)$.)\\
b) Let $A\in T(\sL_\Q(n))$. If $\bar F(\infty|\infty)(A)\ne 0$, then $\bar F(p|q)(A)\ne 0$ for all but a finite number of $(p|q)$. The converse is not true.\\
c) Let $S$ be a finite subset of $\N$. Then there is a canonical decomposition
\[T(\sL_\Q(n))\simeq T(S)\times \prod_{s\in S} T(\sL_\Q(n))\sslash\theta^*(s).\]
In particular, the obvious functor
\[T(\sL_\Q(n)/\bM(r))\to \prod_{s\le r}T(\sL_\Q(n))\sslash\theta^*(s)\]
is an equivalence of $\otimes$-categories for any $r\in \N$.
\end{prop}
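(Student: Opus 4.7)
The plan is to combine Proposition \ref{p15}, which computes the fibres of \eqref{eq2a} in terms of local abelian $\otimes$-envelopes, with the explicit description of $\Spec^\otimes \sL_\Q(n)$ from Lemma \ref{l12}.

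For part (a), I would first show that for each finite $r \in \N$ the quotient $\sL_\Q(n)/\bM(r)$ is Schur-finite: by construction it admits a faithful $\otimes$-functor to $\Rep_\Q(\GL(p|q))$, whose standard representation has finite super-dimension, so Proposition \ref{p5} propagates Schur-finiteness to the whole quotient. The abelian-envelope theorem for $\GL(p|q)$ (Coulembier \cite{coul3} combined with \cite{dS} and \cite{entovaetal}) identifies $\Rep_\Q(\GL(p|q))$ as the unique local abelian $\otimes$-envelope of $\sL_\Q(n)/\bM(r)$; coupled with Theorem \ref{t3} and Theorem \ref{p14} this forces $T(\sL_\Q(n)/\bM(r)) = \Rep_\Q(\GL(p|q))$, a connected category. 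An entirely analogous argument using Lemma \ref{l12} c) together with the category $\sV_n = \Rep_\Q(\GL(\infty|\infty))$ of \cite{entovaetal} handles the generic prime $\bP = 0$, yielding a unique maximal ideal $M_\infty$ with $T(\sL_\Q(n)) \sslash M_\infty = \Rep_\Q(\GL(\infty|\infty))$. By Proposition \ref{p15} every fibre of \eqref{eq2a} is then a singleton, so \eqref{eq2a} is a continuous bijection; since the source is profinite (Proposition \ref{p1} and Theorem \ref{t7}) and the target, at its constructible topology, is $A(\N)$ by Lemma \ref{l12} e), it is a homeomorphism for that topology. All residue fields being $Z(\Rep_\Q(\GL(p|q))) = \Q$, Lemma \ref{l13} now produces the ring isomorphism $\theta$, and the description of $T(\sL_\Q(n)) \sslash \theta^*(r)$ (for all $r \in A(\N)$, including $r = \infty$) is built into the construction.

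For part (b), the forward implication is a continuity argument: $\bar F(\infty|\infty)(A) \ne 0$ holds iff $\theta(e(A))(\infty) \ne 0$; by continuity of $\theta(e(A))$ on $A(\N)$ and the fact that neighborhoods of $\infty$ are cofinite, this forces $\theta(e(A)) \ne 0$ on cofinitely many points of $\N$, i.e.\ $\bar F(p|q)(A) \ne 0$ for all but finitely many $(p|q)$. A counterexample to the converse must exhibit an object of $T(\sL_\Q(n))$ whose vanishing at $\infty$ is not controlled by its support idempotent; its construction goes beyond the purely support-theoretic picture and is a separate matter. For part (c), any finite $S \subset \N$ is clopen in $A(\N)$, so $\theta^{-1}(\chi_S)$ is an idempotent of $Z(T(\sL_\Q(n)))$; splitting iteratively via the singletons $\{s\} \subset S$ and \cite[Cor. 5.2]{BVK} yields the product decomposition, with $T(S)$ the summand corresponding to the complementary idempotent $\chi_{A(\N) \setminus S}$. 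The ``in particular'' statement for $S = \{0, \dots, r\}$ follows by applying Theorem \ref{p14} to $\bI = \bM(r)$: under $\theta$ the ideal $\pi(\bM(r))$ corresponds to functions vanishing on $\{0, \dots, r\}$, so $Z(T(\sL_\Q(n)/\bM(r)))$ has a finite discrete spectrum and Corollary \ref{c8} gives the equivalence with the finite product.

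The principal difficulty is the abelian-envelope identification $T(\sL_\Q(n)/\bM(r)) = \Rep_\Q(\GL(p|q))$ in part (a) (and its analogue at $\infty$): without the nontrivial input from \cite{coul3, dS, entovaetal} the fibres of \eqref{eq2a} cannot be controlled, and everything else in the proof rests on this identification.
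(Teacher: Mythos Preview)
Your route through (a) is essentially the paper's: both rest on \cite[Th.~2]{entovaetal}, which classifies $\otimes$-functors from $\sL_\Q(n)$ into connected rigid abelian categories. The paper invokes it directly (splitting on whether $F(L)$ is Schur-finite) and then applies Corollary~\ref{c2}; you reach the same bijectivity via Proposition~\ref{p15} by counting local abelian envelopes of each quotient. One intermediate claim of yours is wrong, though not fatal: $T(\sL_\Q(n)/\bM(r))$ is \emph{not} $\Rep_\Q(\GL(p|q))$ when $r>0$. Since $\sL_\Q(n)/\bM(r)$ is Schur-finite, Theorem~\ref{c4} identifies $\Spec Z(T(\sL_\Q(n)/\bM(r)))$ with $\Spec^\otimes(\sL_\Q(n)/\bM(r))$, and the latter has the $r+1$ points corresponding to $\bM(0),\dots,\bM(r)$ --- indeed part (c) of this very proposition exhibits $T(\sL_\Q(n)/\bM(r))$ as a product of $r+1$ connected factors. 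What your argument actually needs, and what your citations do supply, is only that $\sL_\Q(n)/\bM(r)$ has a \emph{unique} local abelian envelope, so that the fibre of \eqref{eq2a} over $\bM(r)$ is a singleton. Separately, the case $r=\infty$ is not ``entirely analogous'': $\sL_\Q(n)$ itself is not Schur-finite, so Theorem~\ref{t3} is unavailable there and one must use part (a) of \cite[Th.~2]{entovaetal} directly, as the paper does.

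For (b), your forward implication agrees with the paper's. You decline to treat the failure of the converse; the paper disposes of it with the one-line observation that every subset of $A(\N)$ containing $\infty$ is closed. Part (c) is the same as the paper's.
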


\begin{proof} a) We first show the bijectivity of  \eqref{eq2a} together with the last point.  This is a mere translation of \cite[Th. 2]{entovaetal}: indeed, let  $F:T(\sL_\Q(n))\to \sB$ be an exact $\otimes$-functor, with $\sB\in \Ex^\rig$ connected. Let $\Sigma$ be the set of Schur functors killing $F(L)$. If $\Sigma=\emptyset$ (\resp $\Sigma \neq \emptyset$), $F\circ \lambda_{\sL_\Q(n)}$ factors through $F(\infty|\infty)$ (\resp through a unique $F(p|q)$) by part (a) (\resp (b)) of \cite[Th. 2]{entovaetal}; hence $F$ factors through the corresponding $\bar F(p|q)$.  
We now conclude with Corollary \ref{c2}. 

As in the proof of Theorem \ref{c4}, the bijectivity of  \eqref{eq2a} implies that it is a homeomorphism. The second point now follows from Lemma \ref{l13}.

b) By Theorem \ref{p4}, $\bar F(\infty|\infty)(A)\ne 0$ $\iff$ $\theta^*(\infty)\notin V(e(A))$. But the closed subsets of $A(\N)$ not containing $\infty$ are finite. Therefore, $e(A)\in \theta^*(r)$ only for finitely many $r<\infty$, and $\bar F(p|q)(A)=0$ only for the corresponding $(p|q)$. The converse fails because any subset of $A(\N)$ containing $\infty$ is closed.

c) Let $e(S)\in Z(\sL_\Q(n))$ be the idempotent corresponding via a) to the function with value $1$ at every $s\in S$ and $0$ elsewhere. The decomposition of the statement is the one defined by $e$, with $T(S)$ corresponding to $\Ker e$. The description of the other factor follows from Theorem \ref{p14} and Corollary \ref{c8}. The second statement is the special case $S=[0,r]$.
\end{proof}

Here is a complement in the case $n=0$. Let $\sC\in \Add^\rig$ be the category of \cite[5.8]{dtens}: it is $\otimes$-generated by a self-dual object $X$ of Euler characteristic $0$ possessing an endomorphism $\theta$ of trace $1$ and square $0$.   By the universal property, there is a unique $\otimes$-functor $F:\sL_\Q(0)\to \sC$ sending the generator $L$ of $\sL_\Q(0)$ to $X$.

\begin{prop}\label{p24} a)  The functor $F$ is faithful, but not full.\\
b) We have $\sL_\Q(0)/\sN = \Q$ (the category with one object having endomorphisms $\Q$).
\end{prop}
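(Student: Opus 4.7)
The plan hinges on two inputs: the structure of $\otimes$-ideals of $\sL_\Q(0)$ from Lemma \ref{l12}, and the identification $\End_{\sL_\Q(0)}(L)=\Q\cdot 1_L$. For the latter, Deligne's diagrammatic description of the free rigid $\otimes$-category $\sL$ gives $\End_\sL(L)=\Z[t]\cdot 1_L$, since the only Brauer matching on a single top and single bottom point is the identity line. Reducing modulo $t$ leaves $\End=\Q\cdot 1_L$, which has no non-trivial idempotents; hence $L$ remains indecomposable after pseudo-abelian completion and $\End_{\sL_\Q(0)}(L)=\Q\cdot 1_L$. Moreover $\tr(1_L)=\chi(L)=0$ in $\sL_\Q(0)$ by construction.

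\emph{Proof of (b).} Every $\phi\in\End_{\sL_\Q(0)}(L)$ is a scalar multiple of $1_L$ and therefore has trace $0$; hence $1_L$ is negligible, and $L=0$ in $\sL_\Q(0)/\sN$. Since $L$ is a $\otimes$-generator and dually $L^\vee$ also vanishes, every non-unit tensor monomial collapses, so up to isomorphism the only non-zero object of $\sL_\Q(0)/\sN$ is $\un$, with $\End(\un)=Z(\sL_\Q(0))/(\sN\cap Z(\sL_\Q(0)))=\Q$ (the intersection is zero since $Z(\sL_\Q(0))=\Q$ is a field and $\sN$ is proper).

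\emph{Proof of (a), faithfulness.} Suppose $\Ker F\neq 0$. By Lemma \ref{l12} d), $\Ker F=\bP(r|r)$ for some finite $r\geq 0$, so $F$ factors as $\sL_\Q(0)\to\sL_\Q(0)/\bP(r|r)\hookrightarrow \Rep_\Q(\GL(r|r))$. The target is Schur-finite (its standard generator has finite super-dimension), so some Schur functor $S$ sends $L$ into $\bP(r|r)=\Ker F$; then $S(X)=F(S(L))=0$ in $\sC$. This contradicts the construction of $\sC$ in \cite[5.8]{dtens}, which precisely arranges that $X$ is not Schur-finite.

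\emph{Proof of (a), non-fullness.} If $F$ were full, the endomorphism $\theta$ would equal $F(\phi)$ for some $\phi=\lambda\cdot 1_L\in\End_{\sL_\Q(0)}(L)$, giving $\theta=\lambda\cdot 1_X$ and $\tr(\theta)=\lambda\chi(X)=0$, contradicting $\tr(\theta)=1$. The main obstacle in the plan is the non-Schur-finiteness of $X$ in $\sC$, which I would cite from \cite[5.8]{dtens} rather than reprove; the remaining steps reduce to the computation of $\End_{\sL_\Q(0)}(L)$ and a direct application of Lemma \ref{l12}.
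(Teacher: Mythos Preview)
Your argument is correct and follows the same overall route as the paper: for (b) you use $1_L\in\sN$ (via $\End_{\sL_\Q(0)}(L)=\Q\cdot 1_L$ and $\chi(L)=0$), and for (a) you invoke Lemma~\ref{l12} d) to reduce faithfulness to the non-Schur-finiteness of $X$ in $\sC$, while non-fullness comes from $\End_{\sL_\Q(0)}(L)=\Q\cdot 1_L$. One small slip: when you write ``$F$ factors as $\sL_\Q(0)\to\sL_\Q(0)/\bP(r|r)\hookrightarrow \Rep_\Q(\GL(r|r))$'' the codomain of $F$ is $\sC$, not $\Rep_\Q(\GL(r|r))$; what you mean (and use) is that $\sL_\Q(0)/\bP(r|r)$ embeds in $\Rep_\Q(\GL(r|r))$, hence is Schur-finite, and $F$ factors through it.

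The one substantive difference is that you outsource the non-Schur-finiteness of $X$ to \cite[5.8]{dtens}, whereas the paper proves it on the spot by a trace computation: since $\theta^2=0$, \cite[Cor.~7.2.2]{AK2} gives $\tr(\sigma^{-1}\circ\theta^{\otimes r})=\delta_{\sigma,1}$ for $\sigma\in\fS_r$, so $\tr(S_\lambda(\theta))$ is (up to a nonzero scalar) the coefficient of $1$ in the Young symmetriser $c_\lambda$, which is $1$; hence $S_\lambda(\theta)\ne 0$ for every $\lambda$. This explicit argument buys a little more than you need here---it shows $S(\theta)\ne 0$ for \emph{every} Schur functor $S$---and is exactly what the paper exploits in the Remark following the proposition to conclude that there is no $\otimes$-functor from $\sC$ to any Schur-finite category.
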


\begin{proof} a) $F$ is  not full because  $\theta\notin \IM F$ since $\End_{\sL_\Q(0)}(L)$ is generated by $1_{L}$. In view of Lemma \ref{l12} d), to prove that it is faithful it suffices to show that $X$ is not Schur-finite, and for this it suffices to show that $S(\theta)\ne 0$ for any Schur functor $S$. Let $r>0$ and let $\sigma\in \fS_r$. Since $\theta^2=0$, we have by \cite[Cor. 7.2.2]{AK2}:
\[\tr(\sigma^{-1}\circ \theta^{\otimes r})=
\begin{cases}
1 &\text{if $\sigma=1$}\\
0 &\text{if $\sigma\ne 1$.}
\end{cases} 
\]

Let $\lambda$ be a partition of $r$, and let $c_\lambda\in \Q[\fS_r]$ be the corresponding Young symmetriser \cite[(4.2)]{FH}.  Up to a nonzero scalar, $\tr(S_\lambda(\theta))$ is the coefficient of $\sigma=1$ in $c_\lambda$. This coefficient is $1$, hence $S_\lambda(\theta)\ne 0$ as requested.

b) This is because $1_{L}\in \sN$.
\end{proof}

\begin{rk} The argument of a) shows, more generally, that there is no $\otimes$-functor from $\sC$ to a Schur-finite category. (See also remark \ref{r4}.)
\end{rk}

\subsection{Globalisation} Recall that $Z(\sL_\Q)=\Q[t]$. By Proposition \ref{p1}, there is a canonical homomorphism 
\begin{equation*}
\Q[t]^\abs\to Z(T(\sL_\Q))
\end{equation*}
where $\Q[t]^\abs$ is the flat completion of $\Q[t]$ \cite{olivier} (recall that $\Spec \Q[t]^\abs\to \Spec \Q[t]$ is bijective on the underlying sets). For any $\alpha$, it fits in a cocartesian square
\begin{equation}\label{eq3}
\begin{CD}
\Q[t]^\abs@>>> Z(T(\sL_\Q))\\
@VVV @VVV\\
\Q(\alpha) @>>> Z(T(\sL_\Q(\alpha))
\end{CD}
\end{equation}

Collecting Propositions \ref{p22} and \ref{p20}, we get:

\begin{thm}\label{t6}  The bottom horizontal homomorphism of \eqref{eq3} is an isomorphism if $\alpha\notin \Z$, and is the `constant' homomorphism $\Q\to \Cont(A(\N),\Q)$ if $\alpha\in \Z$.\qed
\end{thm}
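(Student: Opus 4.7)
The plan is to treat the two cases separately. For $\alpha\in \Z$ the conclusion will essentially be a repackaging of Proposition \ref{p20}~a), while for $\alpha\notin \Z$ the key input is the semi-simplicity of $\sL_\Q(\alpha)$ from Proposition \ref{p22}, which forces $T$ to act trivially on it.

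For $\alpha\notin \Z$, I would first establish the general fact that if $\sA\in\Ex^\rig$ is semi-simple then $\lambda_\sA:\sA\to T(\sA)$ is an equivalence. In a semi-simple abelian category every short exact sequence splits, so every additive $\otimes$-functor $\sA\to \sB$ with $\sB\in \Ex^\rig$ is automatically exact. Therefore precomposition with $\lambda_\sA$ induces bijections
\[\Ex^\rig(T(\sA),\sB)\iso \Add^\rig(\sA,\sB)=\Ex^\rig(\sA,\sB)\]
for every $\sB\in\Ex^\rig$, so $\lambda_\sA$ is an equivalence in $\Ex^\rig$. Applied to $\sA=\sL_\Q(\alpha)$, which is abelian semi-simple by Proposition \ref{p22}, this yields $Z(T(\sL_\Q(\alpha)))\simeq Z(\sL_\Q(\alpha))=\Q(\alpha)$. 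Tracing the element $t\in \Q[t]^\abs$ around the square \eqref{eq3} identifies the bottom map with the identity of $\Q(\alpha)$: both compositions send $t$ to $\alpha$. Hence it is an isomorphism.

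For $\alpha=n\in \Z$, I would invoke Proposition \ref{p20}~a) directly to get the ring isomorphism $\theta:Z(T(\sL_\Q(n)))\iso \Cont(A(\N),\Q)$. The bottom map of \eqref{eq3} is the unique unital $\Q$-algebra homomorphism starting from $\Q(n)=\Q$, hence is determined by the image of $1$; since $\theta$ sends the unit of $Z(T(\sL_\Q(n)))$ to the constant function $1$ on $A(\N)$, this map must be the constant homomorphism $\Q\to \Cont(A(\N),\Q)$ sending each $q$ to the function with value $q$ everywhere.

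The only noteworthy (and rather modest) obstacle is the identification $T(\sA)\simeq \sA$ for semi-simple $\sA$ used in the first case; as sketched, this follows cleanly from the $2$-universal property of $T$ combined with the observation that additive functors preserve split exact sequences.
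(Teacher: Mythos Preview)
Your proposal is correct and follows essentially the same approach as the paper, which simply states that the theorem follows by collecting Propositions \ref{p22} and \ref{p20}. You supply a bit more detail, in particular the clean observation that for semi-simple $\sA\in\Ex^\rig$ the functor $\lambda_\sA$ is an equivalence (since additive $\otimes$-functors out of a semi-simple abelian category are automatically exact), which is exactly what makes Proposition \ref{p22} imply the non-integer case; the integer case is, as you say, immediate from Proposition \ref{p20} a) and the fact that $\Q$ is initial among $\Q$-algebras.
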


Unfortunately, this does not quite give an explicit description of $Z(T(\sL_\Q))$ as a $\Q[t]^\abs$-algebra, nor \emph{a fortiori} of the category $T(\sL_\Q)$. Nevertheless, it gives a qualitative description of the map $\Spec Z(T(\sL_\Q))\allowbreak \to \Spec \Q[t]^\abs$: it is an isomorphism away from the integers, where the fibres are all isomorphic to $A(\N)$.

\newpage

\end{document}